\newtheorem{Theorem}{Theorem}[section]
\newtheorem{Proposition}[Theorem]{Proposition}
\newtheorem{Definition}[Theorem]{Definition} 
\newtheorem{Remark}[Theorem]{Remark}
\newtheorem{Lemma}[Theorem]{Lemma}
\newtheorem{Corollary}[Theorem]{Corollary}
\newtheorem{Fact}[Theorem]{Fact}
\newtheorem{Example}[Theorem]{Example}
\newtheorem{Notation}[Theorem]{Notation}
\newtheorem{Question}[Theorem]{Question}
\newtheorem{Claim}[Theorem]{Claim}
\newtheorem{Problem}[Theorem]{Problem}
\newcommand{\Z}{\mathbb Z}
\newcommand{\C}{\mathbb C}
\newcommand{\G}{\mathbb G}
\DeclareMathOperator{\tp}{tp}
\DeclareMathOperator{\stp}{stp}
\DeclareMathOperator{\cb}{Cb}
\DeclareMathOperator{\dcl}{dcl}
\DeclareMathOperator{\acl}{acl}
\def\Ind{\setbox0=\hbox{$x$}\kern\wd0\hbox to 0pt{\hss$\mid$\hss}
\lower.9\ht0\hbox to 0pt{\hss$\smile$\hss}\kern\wd0}
\def\Notind{\setbox0=\hbox{$x$}\kern\wd0\hbox to 0pt{\mathchardef
\nn=12854\hss$\nn$\kern1.4\wd0\hss}\hbox to
0pt{\hss$\mid$\hss}\lower.9\ht0 \hbox to
0pt{\hss$\smile$\hss}\kern\wd0}
\def\ind{\mathop{\mathpalette\Ind{}}}
\begin{document}

\title{Notes}
\title{Relative internality and definable fibrations}
\date{\today}
\author{R\'{e}mi Jaoui, L\'{e}o Jimenez, \and Anand Pillay}
\thanks{During the research leading to this paper, as well as its writing, all three authors were supported by NSF grants DMS-1665035 and DMS-1760212. We are very grateful for the support of the NSF, which made this collaboration possible.}

\maketitle

\begin{abstract} 
We first elaborate on the  theory of {\em relative internality} in stable theories from \cite{Jimenez}, focusing on the notion of uniform relative internality (called collapse of the groupoid in \cite{Jimenez}), and relating it to orthogonality, triviality of fibrations, the strong canonical base property, differential Galois theory, and GAGA.  

We prove that $\mathrm{DCF}_0$ does not have the strong canonical base property,  correcting a proof in \cite{Palacin-Pillay}. We also prove that the theory $\mathrm{CCM}$ of compact complex manifolds does 
not have the strong CBP, and initiate a study of the definable Galois theory of projective bundles.  In the rest of the paper we study definable fibrations in $\mathrm{DCF}_0$, where the general fibre is internal to the constants,  including differential tangent 
bundles, and geometric linearizations. We obtain new examples of higher rank types orthogonal to the constants.
\end{abstract}

\tableofcontents

\section{Introduction}
The work in this paper represents the coming together of several  notions and lines of enquiry, which we will relate to each other in various ways.  

One is a  theory of fibrations in the context of stable theories, where the general fibre is internal to some family $\mathcal P$ of partial types over $\emptyset$; namely we have  definable sets $X$ and $Y$ and definable $\pi:X\to Y$  (all over $\emptyset$), and the generic type of a generic fibre $\pi^{-1}(b)$ is {\em internal} to ${\mathcal P}$.  What can be said in terms of the uniformity of the internality data as $b$ varies?  This was the topic of the second author's paper \cite{Jimenez} where such fibrations were studied in terms of an auxiliary definable groupoid $\mathcal G$ and notions of retractability and collapse of the groupoid. 

Another source is Moosa's notion of {\em preservation of internality} \cite{Moosa},  which is again about fibrations $\pi:X\to Y$ and says that if $a$ is a generic point of a generic fibre $\pi^{-1}(b)$ and $\stp(b/c)$ is (almost) ${\mathcal P}$-internal, then so is $\stp(a/c)$. This notion was developed partly to capture in a model-theoretic manner properties of Moishezon morphisms between compact complex manifolds  (where the ambient theory is $\mathrm{CCM}$ and ${\mathcal  P}$ is the projective line).    Chatzidakis, Harrison-Trainor and Moosa prove in \cite{C-H-T-M} that differential jet bundles preserve internality to the constants (in $\mathrm{DCF}_0$). 

Thirdly, is the recent work of  Jin and Moosa \cite{Moosa-Jin}, where the authors characterize when the pullback under the logarithmic derivative, of a set defined by $x' = f(x)$ which is internal to the constants,  is also  internal to the constants   ( $f$ being is a rational function over a field of constants) working in $\mathrm{DCF}_0$. 

Finally, it is the first author's project to effectively describe or determine the semi-minimal analysis of autonomous differential equations on the plane, as well as seeking criteria for the generic type of an algebraic $D$-variet to be orthogonal to the constants in terms of geometric linearization data. In particular the first author and Rahim Moosa
asked for a $D$-variety structure on the plane such that the generic type is orthogonal to the constants but has a (forking) extension which is internal to the constants (and nonalgebraic).  Many such examples are given in this paper. 

Let us now describe the main results of the paper.

In Section 3 we give the basic theory of uniform relative internality (self-contained and independent of \cite{Jimenez}).  What we call relative internality is the situation of  a stationary type $q$ over $\emptyset$ and a $\emptyset$-definable function $\pi$ such that for  $a$ realizing $q$, $\tp(a/\pi(a))$ is stationary and internal to a family $\mathcal P$ of partial types over $\emptyset$.   This situation is witnessed by, for any $a$ realizing $q$, some set $B$ of parameters (independent from $a$ over $\pi(a)$) such that $a$ is in the definable closure of $\pi(a)$, $B$, and some realizations of types in $\mathcal P$. 
The question is to what extent the internality data $B$ depends on $\pi(a)$.  Uniform relative internality means that the internality data can be chosen not depending on the base point $\pi(a)$.  We give various equivalences, as well as relating uniform relative  internality to triviality of the fibration $\pi$  (when $\pi(q)$ is orthogonal to ${\mathcal P}$).  One of the main points is that uniform relative internality means, in effect, that no internality parameters are required, as long as we replace ${\mathcal P}$ by ${\mathcal P}_{int}$, the family of types over $\emptyset$ internal to ${\mathcal P}$.  

We also repeat the observation \cite{Jimenez} that uniform relative internality implies preservation of internality. 

In Section 4, we relate  uniform (almost)  relative internality to the canonical base property, or more accurately the strong canonical base property from \cite{Palacin-Pillay}. We give a correct proof that $\mathrm{DCF}_0$ does not have the strong canonical base property. We also discuss the case of $\mathrm{CCM}$ the many sorted theory of compact complex manifolds. We prove that $\mathrm{CCM}$ does not have the strong canonical base property, via the Galois-theoretic criteria of \cite{Palacin-Pillay}, and ask several questions.

Sections 5, 6 and 7, are devoted to the study of examples in $\mathrm{DCF}_0$, where ${\mathcal P}$ is of course the field of constants.  
In Section 5 ,  we consider the case of ODE's (on the plane) of the form  $x' = f(x)$, $y' = yg(x)$, where $f$, $g$ are rational functions over $\C$, which is more general than the situation studied by Jin and Moosa in \cite{Moosa-Jin} (where $g(x)$ is $1$).  We characterize when the generic type of the total space is orthogonal to the constants. Equivalently, assuming the base $x' = f(x)$ is orthogonal to the constants, we characterize when the fibration is (not) uniformly relatively internal to the constants.

In Section 6,  we study the problem of lifting orthogonality to the constants from a $D$-subvariety $(Z, s|Y)$ of $(X,s)$ to $(X,s)$ itself,  when $Z$ is a hypersurface, using  linearizations of $Z$ and (non) uniform relative internality of corresponding fibrations (over the base $Z$).

In Section 7,  we consider the problem of when differential tangent bundles (of differential algebraic varieties) are uniformly relatively internal (to the constants).  We give some positive results  (for example for differential algebraic groups) but also give a counterexample. Bearing in mind  \cite{C-H-T-M} mentioned above, this gives another example where preservation of internality does not imply uniform relative internality. 

The results of Section 5,6, and 7, give a variety of examples to the question of the first author and Moosa, mentioned above. 

This paper includes fairly pure stability theoretic notions, as well as algebraic geometric notions.
In the preliminaries section we will try to give background and references, including about algebraic $D$-varieties and the interpretation in terms of actual solutions of algebraic differential equations.

We would like to thank Omar Le\' on S\'anchez  and Rahim Moosa for discussions. In particular, Le\'on S\'anchez pointed out a  mistake in a talk on this topic by  the third author at a conference in Kent in July 2019, and his (counter) example is item (ii) in Proposition \ref{uniform-internality-of-tangent}. 

Rahim Moosa insisted on us extracting the proper content and strength of the notion of uniform (almost) relative internality, and this is what we have now done in Section 3. 

In the process of writing this paper, we realized the importance of definable Galois theory, with uniform relative internality corresponding to the special case of triviality of the Galois group  of the generic fibre, at least when ${\mathcal P} = {\mathcal P}_{int}$.  This point of view is reflected especially in Section 4 on the (strong) canonical base property. 

\section{Preliminaries}
Our model theoretic and stability theoretic notation is standard (as in \cite{Pillay-book} for example) except that we use $\mathbb M$ to denote a very saturated model.  All theories $T$ we deal with will be stable and we typically work in $T^{eq}$. This is consistent with our examples  (mainly $\mathrm{DCF}_0$  but also $\mathrm{CCM}$,  which have elimination of imaginaries).  Of course 
stability theory has a rather complicated machinery  (as do other subjects in mathematics) but there are many books and articles available about the basic framework, including the ones referenced in this paper. 

One of the key background notions is {\em internality}: Fix a base set of parameters, say $A$, and a family ${\mathcal P}$ of partial types over $A$ as well as a stationary type $p(x)$ over $A$.  
 Then $p$ is ${\mathcal P}$-internal  (or internal to ${\mathcal P}$) if there is some set $B$ of parameters  (the internality data), such that for some (any) $a$ realizing $p$ which is independent from $B$ over $A$, $a\in dcl(A,B,c)$ for some tuple $c$ of realizations of partial types in ${\mathcal P}$.  If we replace $dcl$ by $acl$ we get the notion of {\em almost internality}. 
There are various equivalences; see Chapter 7 of \cite{Pillay-book}.
This also makes sense when ${\mathcal P}$ is replaced by a family ${\mathcal P}$ of partial types over maybe larger parameter sets but such that the family ${\mathcal P}$ is still invariant under automorphisms fixing $A$ (pointwise). 

We will also be discussing the situation where $Y$ and $P$ are sets definable over $A$, and $Y$ is internal to $P$ meaning that for some set $B$ of parameters containing $A$, $Y\subseteq dcl(P,B)$. This means that $Y$ is in definable bijection with some definable set $Z$ in  $P_{A}^{eq}$. 

We recall canonical bases in stable theories: Given a stationary type $p(x)$ over some set $A$ of parameters, the canonical base of $p$, denoted $Cb(p)$ is the definable closure of the set of $\phi(x,y)$-definitions of the unique global (over $\mathbb M$) nonforking extension $p'$ of $p$, as $\phi(x,y)$ ranges over $L$-formulas (and if $\psi(y,c)$ is the $\phi(x,y)$-definition of $p'$ then we view $\psi(y,c)$ as an element of ${\mathbb M}^{eq}$). 

We will discuss the {\em canonical base property} (CBP) which mostly concerns superstable theories of finite $U$-rank  (i.e. where every type has finite $U$-rank)  but also makes sense for the finite rank part of an infinite rank theory such as $\mathrm{DCF}_0$  (differentially closed fields of characteristic $0$), and can be extended to supersimple theories of finite rank.   In the most general formulation we take ${\mathcal P}$ to be the family of stationary nonmodular types of $U$-rank $1$  (which is $\emptyset$-invariant). The theory $T$ has the canonical base property if whenever  $c = Cb(\stp(a/c))$ then  $\stp(c/a)$ is almost ${\mathcal P}$-internal.

\vspace{5mm}
\noindent
In the same way as we assume familiarity with model theory we will assume familiarity with basic algebraic geometry, as can be found in \cite{Shafarevich} for example.  In many cases the naive point-set point of view suffices, as is described in several model theory texts or papers such as \cite{Poizat-book} or \cite{Pillay-ML}, and where the compatibility with definability is emphasized.  However in Section \ref{linearization} of this paper, the algebraic geometric language is more sophisticated, and the reader is referred to Hartshorne's book \cite{Hartshorne}, in particular section 8 of Chapter II.

When it comes to the combination of algebraic (and  differential) geometry, differential algebra, and model theory, the references tend to be in papers rather than  books, and are less well-known (to general model-theorists or geometers).   A basic text is Marker's article in \cite{Marker} on the model theory of differential fields, specifically the first order theory $\mathrm{DCF}_{0}$ of differentially closed fields of characteristic $0$. For a fairly comprehensive account of the relationship between geometric notions such as algebraic $D$-varieties and definability in differentially closed fields, see Section 1 of \cite{Hrushovski-Itai}. The appendix to \cite{Bertrand-Pillay} also has a translation between differential algebraic and algebraic/analytic language, although mainly in the context of algebraic $D$-groups.

The relation between solutions of algebraic differential equations (as holomorphic, meromorphic, or real analytic functions on a suitable domain)  and differential algebraic varieties, which are more or less definable sets in differentially closed fields, is analogous to the relation between integer or rational solutions of polynomial equations (over $\Z$ say) and the geometric properties of the solution sets in some larger algebraically closed field, such as $\C$  (in characteristic $0$).  The model theory of algebraically closed fields does not give much new information about algebraic varieties, but for reasons we will not go into here, model theory does say a lot about differential algebraic varieties. 

On the model theoretic side we study definable sets in an ambient (maybe saturated) differentially closed field ${\mathcal U}$ with distinguished derivation $\partial$.  On the differential algebra side, we study differential algebraic varieties (DAV's), which are point sets in ${\mathcal U}$ defined by finite systems of differential polynomial equations (in indeterminates $y_{1},..,y_{n}$ say), as well as generalizations to abstract differential algebraic varieties. A DAV is a special case of a definable set, and stability theory brings many new tools to the picture.  On the algebraic geometric side, a key notion is that of an algebraic $D$-variety over a differential field $(K,\partial)$, which was introduced explicitly by Buium.  This is an algebraic variety (say irreducible) over $K$ with an extension of the derivation $\partial$ on $K$ to a derivation ${\partial}'$  of the structure sheaf of $X$.  When $X$ is affine this just means an extension of $\partial$ to a derivation $\partial '$ of the coordinate ring $K[X]$, i.e. a derivation from $K[X]$ to itself.  Either way, we get an extension of the derivation $\partial $ to the function field $K(X)$ of $X$.

Let us discuss briefly the connection between  algebraic $D$-varieties, definable sets, and solutions of differential equations. 
When the underlying variety $X$ of an algebraic $D$-variety is defined over a field $k$ (maybe algebraically closed) of constants, then the $D$-variety structure can be interpreted as a vector field on $X$, namely a regular section $s$ of the tangent bundle $T(X)$ of $X$  (which we assume to also be defined over $k$).  So for each $a\in X$, we have a point $s(a)$ in the tangent space to $X$ at $a$ (where $a$ may live in an arbitrary field containing $k$), and $s$ is a morphism in the sense of algebraic geometry.  What we call $(X,s)^{\partial}$ is the definable (over $k$) set in the universal domain, defined by  $\{a\in X({\mathcal U}):  (a,\partial(a)) = s(a)\}$.  What does it mean in terms of solutions?  Suppose $k = \C$, and 
$K$ is a differential subfield of $\mathcal U$ which contains $k$ and is some field of meromorphic functions on a disc $D$ in $\C$ and moreover on $K$, $\partial$ is just $d/dt$  (where $t$ is the independent variable).  For example $K$ could be simply the field of rational functions $\C(t)$ equipped with $d/dt$. 
Then $X(K)$ is the collection of suitable meromorphic functions from the disc $D$ to the complex variety $X(\C)$.  And the set $(X,s)^{\partial}(K)$ of $K$ points of the (quantifier-free) definable set $(X,s)^{\partial}$ consists of those meromorphic functions $f$ such that $df(1)$ coincides with the vector field $s$  (where $df$ is the differential of $f$ and $1$ is the vector field on the affine (or projective) line corresponding to $d/dt$). 

This geometric interpretation extends to the case when $K$ is the field of meromorphic functions on some open  subset $U$ of a complex algebraic variety $V$, which is equipped with a vector field. 

We will be discussing invariant subvarieties $Y$ of an algebraic $D$-variety $(X,s)$, all defined over the constants $\C$ say.  This just means that the vector field $s$ on $X(\C)$ restricts to a vector field on $Y(\C)$, namely for each $a\in Y$, $s(a)$ is in the tangent space to $Y$ at $a$.  When $Y$ is a curve, this is another kind of solution to $(X,s)$, namely an integral curve. 

Although we will largely be concerned with algebraic $D$-varieties over fields of constants, it is worth mentioning the geometric interpretation for $D$-varieties not necessarily defined over the constants.  For simplicity we consider the case of an algebraic $D$ variety $(X,\partial ')$ defined over $(\C(t), d/dt)$.  
%(Actually defined over the ring $(\C[t],d/dt)$ would be better, but never mind.)  
So $X$ is the generic fibre of a dominant rational map $\pi$ from a complex variety $\mathcal X$ to the affine line, with a lifting of the vector field $\bf 1$ on the affine line to a rational vector field $\bf s$ on $\mathcal X$. (Namely such that $d\pi({\bf s}) = {\bf 1}$.)   Then  points of $(X,\partial')^{\partial}(\C(t))$ consists of 
rational {\em sections} $f$ of $\pi$ such that  $df({\bf 1}) = {\bf s}$ in the obvious sense.   Likewise if we look for points of $(X,\partial ')(K)$ in some field $K$ of meromorphic functions on some disc $D$. And one has a similar treatment when $(\C(t),d/dt)$ is replaced by some complex algebraic variety equipped with a vector field.

In any case this situation of $\pi:{\mathcal X} \to {\mathbb A}_{1}$ with a lifting of the vector field ${\bf 1}$ to a vector field ${\bf s}$ is also called an Ehresmann connection. 

Let us mention in passing that in the situation of an algebraic $D$-variety $(X,\partial ')$ over a possibly nonconstant differential field,  $\partial '$ is represented by a regular or rational section $s$ not of the tangent bundle $T(X)$ by a shifted tangent bundle $T(X)_{\partial}$.  See \cite{Hrushovski-Itai}. 

At the end of section 4 we have  a relatively self contained discussion of the theory of compact complex manifolds. We will give relevant references there.

\section{Uniform Relative Internality}

This section will mostly be pure model theory. In later sections we will take a look at concrete examples.

We work in the context of a complete stable theory $T = T^{eq}$, and we will be assuming for convenience that $acl(\emptyset) = dcl(\emptyset)$ (i.e. all complete types over $\emptyset$ are stationary). We also fix an ambient monster model $\mathbb{M}$.

As discussed above ${\mathcal P}$ denotes a set of partial types over $\emptyset$, sometimes identified notationally with the union of the sets of realizations of types in ${\mathcal P}$ in $\mathbb M$. Moreover $q$ denotes a complete type over $\emptyset$ and $\pi$ a $\emptyset$-definable function whose domain includes the realizations of $q$. We assume that $\tp(a/\pi(a))$ is stationary for some/any realization $a$ of $q$. 

Let $\pi(q)$ denote $\tp(\pi(a))$ for $a$ some (any) realization of $q$.  For $b$ realizing  $\pi(q)$, we let $q_{b}$ denote $\tp(a/b)$ for a some (any) realization of $q$ such that $\pi(a) = b$. We will also often denote this $q_{\pi(a)}$. 

Of course we could work instead over some algebraically closed set $A$ of parameters, and all our results would hold. We will stick with $A= \emptyset$ to ease notation, at least in the general theory.

\begin{Definition} We say that $(q,\pi)$ is relatively ${\mathcal P}$-internal (or relatively internal to ${\mathcal P}$) if for some (any) $a$ realizing $q$, $\tp(a/\pi(a))$ is internal to ${\mathcal P}$, namely for some tuple of parameters $e$ independent from $a$ over $\pi(a)$, $a\in dcl(\pi(a),e,{\mathcal P})$.  $(q,\pi)$ is said to be relatively {\em almost} ${\mathcal P}$-internal if the above holds with $dcl$ replaced by $acl$.

\end{Definition}

The following is well-known (see Section 4, Chapter 7, of \cite{Pillay-book}.)

\begin{Remark}  (i) $(q,\pi)$ is relatively ${\mathcal P}$-internal iff  for any $b$ realizing $\pi(q)$, there is a tuple $e$ such that for any $a$ realizing $q$ with $\pi(a) = b$, $a\in dcl(b,e,{\mathcal P})$.  
\newline
(ii) Moreover, in (i), we can choose $e$ to be a finite independent (over $b$) tuple of realizations of $q_{b}$. 
\newline
Likewise for {\em almost} in (i) and (ii). 
\end{Remark} 

The notion of uniform relative internality concerns if and when  the parameter $e$ can be chosen not to depend on the point $b$ realizing $\pi(q)$.  We will take as a precise definition the analogue of Definition 2.1 above.  

\begin{Definition}\label{unif-int-definition}
The pair $(q,\pi)$ is uniformly relatively (resp. almost) ${\mathcal P}$-internal if there is a finite tuple $e$ of parameters such that for some (any) $a$ realizing $q$ with  $a$ independent from $e$ over $\emptyset$, we have $a\in dcl(\pi(a),e, {\mathcal P})$  (resp. $a\in acl(\pi(a), e, {\mathcal P})$). 
\end{Definition}

Any (almost) $\mathcal{P}$-internal type $\tp(a/b)$ gives rise to a relatively (almost) $\mathcal{P}$-internal pair $(\tp(ab/\emptyset),\pi)$, where $\pi$ is the projection on the $b$ coordinate. As we will frequently need to consider such a situation, we define the following for convenience:

\begin{Definition}

The stationary type $\tp(a/b)$ is said to be uniformly (almost) $\mathcal{P}$-internal if the pair $(\tp(ab/\emptyset),\pi)$, where $\pi$ is the projection on the $b$ coordinate, is relatively uniformly (almost) $\mathcal{P}$-internal.

\end{Definition}

Of course a much stronger condition is that no parameter $e$ is needed. Compare to the notion in \cite{Moosa} of ${\mathcal P}$-algebraicity. 

\begin{Definition}  The pair $(q,\pi)$ is relatively  ${\mathcal P}$-definable, if for some (any) $a$ realizing $q$ there is some tuple $c$  from ${\mathcal  P}$ such that $a\in dcl(\pi(a),c)$. 
\newline
The pair $(q,\pi)$ is relatively  ${\mathcal P}$-algebraic if for some (any) $a$ realizing $q$, $a\in acl(\pi(a),c)$ for some tuple $c$ from ${\mathcal P}$. 
\end{Definition}  

\begin{Remark}  One easily sees that $(q,\pi)$ relatively ${\mathcal P}$-definable implies $(q,\pi)$ uniformly relatively  ${\mathcal P}$-internal implies $(q,\pi)$ relatively ${\mathcal P}$-internal. And likewise with algebraic and almost.  The converses of these implications will not always hold. See below. 

\end{Remark}

Let us start by stating a few elementary characterizations of uniform (almost) internality, in analogy with the situation for (almost) internality: 

\begin{Lemma} The following are equivalent:
\begin{itemize}
\item[(1)] $(q,\pi)$ is uniformly relatively ${\mathcal P}$-internal (in the sense of Definition \ref{unif-int-definition}).
\item[(2)] $(q,\pi)$ is uniformly relative internal to ${\mathcal P}$, witnessed by $e$ an independent tuple of realizations of $q$. 
\item[(3)] There is a finite tuple $e$ of independent realizations of $q$ such that whenever $a$ realizes $q$ and $\pi(a)$ is independent from $e$ over $\emptyset$ then $a\in dcl(\pi(a),e,{\mathcal P})$. 
\item[(4)] There is a (possibly infinite) tuple $e'$ of independent realizations of $q$ such that for some (any) realization $a$ of $q$, $a\in dcl(\pi(a),e',{\mathcal P})$. 
\end{itemize}
\end{Lemma}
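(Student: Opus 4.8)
The plan is to prove the cycle of implications $(1)\Rightarrow(2)\Rightarrow(3)\Rightarrow(4)\Rightarrow(1)$, since each of these is either a trivial weakening or a routine application of the basic machinery of internality. The only substantive point is $(1)\Rightarrow(2)$, where we must replace an arbitrary finite internality parameter $e$ by a tuple of realizations of $q$ itself; I expect this to be the main obstacle, and it is handled by the standard trick used in the analogous statement for ordinary internality (Chapter 7 of \cite{Pillay-book}).

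For $(1)\Rightarrow(2)$: start with a finite tuple $e$ as in Definition \ref{unif-int-definition}, so that for $a$ realizing $q$ with $a\ind_\emptyset e$ we have $a\in\dcl(\pi(a),e,\mathcal P)$, witnessed by a tuple $c$ from $\mathcal P$ with $c\ind_{\pi(a)} a$ (the independence of the internality data over the base is part of the definition of internality and can always be arranged). The idea is to produce a tuple $e^*=(a_1,\dots,a_n)$ of independent realizations of $q$ that ``absorbs'' $e$, in the sense that $e\in\dcl(e^*,\mathcal P)$ (or at least $e$ becomes definable from $e^*$ together with realizations of $\mathcal P$). Take $a$ realizing $q$ with $a\ind_\emptyset e$; then $a\in\dcl(\pi(a),e,\mathcal P)$. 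Now one runs the usual argument: by compactness and the fact that $q$ is the type of a generic point of the fibration, finitely many independent realizations $a_1,\dots,a_n$ of $q$, each independent from $e$, can be chosen so that $e\in\dcl(\pi(a_1),a_1,\dots,\pi(a_n),a_n,\mathcal P)$ — this is where one uses that $\tp(e/\dots)$ over the relevant base is itself $\mathcal P$-internal by a symmetry/reciprocity argument on the internality witnessed above, together with Remark 2.4(ii) which lets one take the internality data for $q_b$ to be independent realizations of $q_b$. Since each $a_i\in\dcl(\pi(a_i),e,\mathcal P)\subseteq\dcl(\pi(a_i),e^*,\mathcal P)$ is automatic once $e^*$ includes enough copies, and $e\in\dcl(e^*,\mathcal P)$, we conclude that any $a$ realizing $q$ with $a\ind_\emptyset e^*$ satisfies $a\in\dcl(\pi(a),e,\mathcal P)\subseteq\dcl(\pi(a),e^*,\mathcal P)$. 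This gives (2).

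For $(2)\Rightarrow(3)$: this is essentially a restatement. If $e$ is an independent tuple of realizations of $q$ witnessing uniform relative internality in the sense of (2), then ``for any $a$ realizing $q$ with $a\ind_\emptyset e$'' can be weakened to ``for any $a$ realizing $q$ with $\pi(a)\ind_\emptyset e$'': indeed, given $a$ with $\pi(a)\ind_\emptyset e$, one finds $a'\equiv_{\pi(a)} a$ with $a'\ind_{\pi(a)} e$, hence $a'\ind_\emptyset e$; then $a'\in\dcl(\pi(a'),e,\mathcal P)=\dcl(\pi(a),e,\mathcal P)$, and since $\tp(a/\pi(a))=\tp(a'/\pi(a))$ is stationary and $a'$ is definable over $\pi(a),e$ together with realizations of $\mathcal P$, a conjugation argument over $\pi(a)$ moving $a'$ to $a$ (fixing $\pi(a)$, and noting the family $\mathcal P$ is $\emptyset$-invariant) shows $a\in\dcl(\pi(a),e,\mathcal P)$ as well. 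For $(3)\Rightarrow(4)$: take $e'$ to be the given finite $e$ (a finite tuple is a special case of a possibly infinite one), and for arbitrary $a$ realizing $q$, choose a realization $e'$ of $\tp(e/\emptyset)$ with $e'\ind_\emptyset a$; then $\pi(a)\ind_\emptyset e'$, so $a\in\dcl(\pi(a),e',\mathcal P)$ by (3). Finally $(4)\Rightarrow(1)$: given the infinite tuple $e'$ with $a\in\dcl(\pi(a),e',\mathcal P)$, by compactness only finitely many coordinates of $e'$ and finitely many realizations of $\mathcal P$ are needed for the defining formula, so a finite subtuple $e$ of $e'$ works, and replacing $a$ by a realization of $q$ independent from this finite $e$ (possible since $q$ is over $\emptyset$ and stationary) recovers exactly Definition \ref{unif-int-definition}. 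The one place requiring genuine care is $(1)\Rightarrow(2)$, specifically the reciprocity step showing the original internality parameter $e$ lies in the definable closure of finitely many independent realizations of $q$ (with their $\pi$-images) together with realizations of $\mathcal P$; this is the analogue of the well-known fact that internality data can always be taken to be realizations of the type in question, and it is this step that I would write out most carefully.
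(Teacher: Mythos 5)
Your skeleton (the cycle $(1)\Rightarrow(2)\Rightarrow(3)\Rightarrow(4)\Rightarrow(1)$) matches the paper's, and $(4)\Rightarrow(1)$ is essentially right, but the steps you yourself flag as delicate contain genuine gaps. For $(1)\Rightarrow(2)$, the absorption claim $e\in\dcl(e^*,\mathcal P)$ for a tuple $e^*$ of realizations of $q$ is false in general: $e$ is an arbitrary parameter tuple and may carry information orthogonal to both $q$ and $\mathcal P$, so no reciprocity argument will recover it. What is true, and what the paper uses, is that you do not need $e$ itself: writing $a\in\dcl(\pi(a),c,e)$ with $c$ from $\mathcal P$, take a saturated model $M$ containing $e$ with $a\models q|M$ and a Morley sequence $(a_i,c_i)_i$ of $\stp(a,c/e)$ inside $M$; then $\tp(a,c/M)$ is definable over that sequence, so $a\in\dcl(\pi(a),c,a_1,c_1,\dots,a_n,c_n)$ for some $n$, where the $a_i$ are independent realizations of $q$ independent from $a$ and the $c_i$ come from $\mathcal P$. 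The tool is definability of types over a Morley sequence (canonical bases), not compactness.

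For $(2)\Rightarrow(3)$ your conjugation argument does not close: an automorphism $\sigma$ fixing $\pi(a)$ and taking $a'$ to $a$ takes $\dcl(\pi(a),e,\mathcal P)$ to $\dcl(\pi(a),\sigma(e),\mathcal P)$, and since $a$ may fork with $e$ over $\pi(a)$ there is no reason one can choose $\sigma$ fixing $e$; the whole content of (3) beyond (2) is precisely that it covers the realizations of $q_b$ that are \emph{not} independent from $e$. The paper instead uses that $q_b$ is $\mathcal P$-internal, hence admits a fundamental system $\bar\alpha$ of realizations of $q_b$ with every realization of $q_b$ in $\dcl(b,\bar\alpha,\mathcal P)$; choosing $\bar\alpha\ind_b e$ makes each $\alpha$ independent from $e$ over $\emptyset$, so (2) applies to the $\alpha$'s and every realization of $q_b$ then lands in $\dcl(b,e,\mathcal P)$. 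Finally, in $(3)\Rightarrow(4)$ you let $e'$ vary with $a$, whereas (4) requires one fixed tuple working for every realization of $q$; the fix is to take $e'$ a Morley sequence of length $|T|^+$ in $\tp(e)$, so that by local character every $\pi(a)$ is independent from some block of it and (3) applies to that block after an automorphism.
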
 
\begin{proof} (1) implies (2).  Assume (1) and let $e$ be a tuple witnessing uniform internality, namely there is $a$ realizing $q|e$, and $a\in dcl(\pi(a),e,{\mathcal P})$. 

Let $M$ be a saturated model containing $e$ such that $a$ realizes $q|M$.  Let $c$ be a tuple of realizations of types from ${\mathcal P}$ such that $a\in dcl(\pi(a), c, e)$.  Let  $(a_{i},c_{i})_{i \in \mathbb{N}}$ be a Morley sequence in $\stp(a,c/e)$ contained in $M$. Then $\tp(a,c/M)$ is definable over $(a_{i},c_{i})_{i \in \mathbb{N}}$ whereby $a\in dcl(\pi(a),c,a_{1},c_{1},...,a_{n},c_{n})$ for some $n$.

Thus $(a_{1},..,a_{n})$ is an independent tuple of realizations of $a$, independent from $a$ over $\emptyset$ and $a\in dcl(\pi(a),(a_{1},..,a_{n}),{\mathcal P})$, which yields (2). 
\newline
(2) implies (3).  Let $e$ be a  tuple of independent realizations of $q$ given by (2), namely for any $a$ realizing $q|e$ (i.e. $a$ independent from $e$), $a\in dcl(\pi(a),e,{\mathcal P})$.  Now let $b$ realize $\pi(q)|e$, i.e. $b$ is independent from $e$.  We know that $q_{b}$ is ${\mathcal P}$-internal. So by Lemma 4.2, Chapter 7 of \cite{Pillay-book}, there is a sequence ${\bar\alpha}$ of realizations of $q_{b}$ such that any realization $a$ of $q_{b}$ is in the definable closure of $b, {\bar\alpha}$ and ${\mathcal P}$. We may assume that ${\bar\alpha}$ is independent from $e$ over $b$. But then ${\bar{\alpha}}$ is independent from $e$ over $\emptyset$, hence by (2) any $\alpha\in {\bar\alpha}$ is in $dcl(b,e,{\mathcal P})$.  It follows that any realization of $q_{b}$ is in $dcl(b,e,{\mathcal P})$. 
\newline
(3) implies (4).  Let $e$ be given by (3) and $e' = (e_{i})_i$ be a Morley sequence of length $|T|^{+}$ in $\tp(e)$.
Consider $a$ an arbitrary realization of $q$. Then  $b = \pi(a)$ is independent from some $e_{i}$, so by (3) (and automorphism), we obtain $a\in dcl(b,e_{i},{\mathcal P})$. 
Hence for any realization $a$ of $q$, we have $a\in dcl(b,e',{\mathcal P})$.
\newline
(4) implies (1). The only thing to do is to replace the possibly infinite tuple $e'$ by a finite subtuple.  Let $a$ realize $q$ such that $a$ is independent of $e'$. By (4) $a\in dcl(\pi(a),e',{\mathcal P})$, hence there is a finite subtuple $e$ of $e'$ and a $\emptyset$-definable function $f$ such that $a = f(\pi(a),e,c)$ for some finite tuple $c$ of realizations of types from ${\mathcal P}$. So for any other realization $a'$ of $q$ which is independent from $e$, $\tp(a,e) = \tp(a',e)$ by stationarity, therefore  $a' = f(\pi(a'),e,c')$ for some suitable $c'$ from ${\mathcal P}$, and we have (1). 
\end{proof}

\begin{Remark} (i)  If either $q$ has finite weight, or ${\mathcal P}$ is a collection of formulas (rather than partial types), then we can in (4) choose $e'$ to be a finite tuple.  The first is contained in Proposition 5.9 of \cite{Jimenez} and the second is a compactness argument. 
\newline
(ii)  Lemma 3.7  goes through with obvious modifications for {\em uniformly almost relatively internal}. 
\end{Remark}

We now aim towards \ref{unif-int-iff-Pint-alg} below which says that a slight tweaking of the strong properties of ${\mathcal P}$-definability and algebraicity will make them {\em equivalent} to uniform relative ${\mathcal P}$-internality, and uniform relative almost ${\mathcal P}$-internality, respectively.

\begin{Definition}

Let $\mathcal{P}$ be a family of partial types over $\emptyset$. The internal closure of $\mathcal{P}$, denoted $\mathcal{P}_{\mathrm{int}}$, is the set of $\tp(a/\emptyset)$ such that $\stp(a/\emptyset)$ is $\mathcal{P}$-internal.

\end{Definition}

We have:

\begin{Proposition}\label{unif-int-iff-Pint-alg}

The following are equivalent:

\begin{itemize}
    \item[(1)] $(q,\pi)$ is uniformly relatively $\mathcal{P}$-internal
    \item[(2)] $(q,\pi)$ is uniformly relatively $\mathcal{P}$-internal, witnessed by a tuple $t$ such that $\tp(t/\emptyset)$ is $\mathcal{P}$-internal
    \item[(3)] For some (any) realization $a$ of $q$, the type $\tp(a/\pi(a))$ is $\mathcal{P}_{\mathrm{int}}$-definable.
\end{itemize}
And the same equivalences hold after adding almost in (1) and (2), and writing algebraic in place of definable in (3). 

\end{Proposition}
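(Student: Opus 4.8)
The plan is to prove the cycle $(1)\Rightarrow(2)\Rightarrow(3)\Rightarrow(1)$; since $(2)$ is literally a special case of $(1)$, this also yields $(2)\Rightarrow(1)$, and the ``almost/algebraic'' variants are obtained throughout by reading $\acl$ for $\dcl$ and ``$\mathcal{P}_{\mathrm{int}}$-algebraic'' for ``$\mathcal{P}_{\mathrm{int}}$-definable''. Of the three implications, the first two below are bookkeeping and the third is the real content.

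\emph{$(2)\Rightarrow(3)$.} Fix $a$ realizing $q$ independent from the witness $t$ over $\emptyset$, so $a=f(\pi(a),t,d)$ for a $\emptyset$-definable $f$ and a tuple $d$ from $\mathcal{P}$. Each coordinate of $d$, being a realization of a partial type in $\mathcal{P}$, lies in $\dcl(\mathcal{P})$ and hence has $\mathcal{P}$-internal type over $\emptyset$; together with the hypothesis that $\tp(t/\emptyset)$ is $\mathcal{P}$-internal (and closure of $\mathcal{P}$-internality under finite tuples), the concatenation $(t,d)$ realizes a type in $\mathcal{P}_{\mathrm{int}}$. Since $a\in\dcl(\pi(a),(t,d))$, the type $\tp(a/\pi(a))$ is $\mathcal{P}_{\mathrm{int}}$-definable; the ``some versus any'' issue is settled by stationarity of $q$ over $\emptyset$ and the $\emptyset$-invariance of $\mathcal{P}_{\mathrm{int}}$.

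\emph{$(3)\Rightarrow(1)$.} First I would record that uniform relative $\mathcal{P}$-internality and uniform relative $\mathcal{P}_{\mathrm{int}}$-internality are the same property: one inclusion is immediate, since realizations of $\mathcal{P}$ have $\mathcal{P}$-internal type over $\emptyset$; for the other, given $a\in\dcl(\pi(a),e,\mathcal{P}_{\mathrm{int}})$ one unwinds the internality of the finitely many $\mathcal{P}_{\mathrm{int}}$-realizations occurring, replaces them by generic witnesses of their internality, absorbs those witnesses into $e$, and tidies up with Lemma 3.7. Granting this, $(3)$ asserts that $(q,\pi)$ is relatively $\mathcal{P}_{\mathrm{int}}$-definable, so Remark 3.6 applied with $\mathcal{P}_{\mathrm{int}}$ in place of $\mathcal{P}$ gives uniform relative $\mathcal{P}_{\mathrm{int}}$-internality, which is $(1)$.

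\emph{$(1)\Rightarrow(2)$, the main point.} By Lemma 3.7 we may take the witness of $(1)$ to be a finite tuple $e=(a_1,\dots,a_n)$ of independent realizations of $q$, so that for $a$ realizing $q$ with $a\ind_{\emptyset}e$ (equivalently $a\ind_{\pi(a)}e$) we have $a=f(\pi(a),e,d)$ with $d$ from $\mathcal{P}$. Using the equivalence of the previous paragraph we may furthermore assume $\mathcal{P}=\mathcal{P}_{\mathrm{int}}$, and then it suffices to show that $(q,\pi)$ is already relatively $\mathcal{P}$-\emph{definable}, i.e. that the parameter $e$ can be dropped entirely. The strategy: put $b=\pi(a)$, choose a Morley sequence $(e_k)_{k<\omega}$ over $b$ in $\tp(e/b)$ which is independent from $a$ over $b$ — such a sequence is automatically Morley in $\tp(e/\emptyset)$ and still satisfies $a=f(b,e_k,d_k)$ with $d_k$ from $\mathcal{P}$ for each $k$, by stationarity of $q$ over $\emptyset$ — and then deduce $a\in\dcl(b,\mathcal{P})$ from $\bigcap_{k}\dcl(b,e_k,\mathcal{P})=\dcl(b,\mathcal{P})$. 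The obstacle, and the only place uniform internality is genuinely used, is that the $e_k$ are \emph{not} independent over $b\cup\mathcal{P}$: each $e_k$ carries $\mathcal{P}$-internal content along its own fibre $\pi^{-1}(\pi(e_k))$ and $\pi(e_k)$ is generic over $b$, so the collapse of the intersection is not formal. To force it I would apply uniform relative internality to the coordinates of the $e_k$ (whose $\pi$-images are generic over $e$, hence independent from $e$), which rewrites that $\mathcal{P}$-internal content of $e_k$ over $b$ using only realizations from $\mathcal{P}$, after which the $e_k$ become independent over $b\cup\mathcal{P}$ and the intersection collapses. Equivalently — and this is the conceptually cleanest formulation — one shows that when $\mathcal{P}=\mathcal{P}_{\mathrm{int}}$, uniform relative internality of $(q,\pi)$ forces the definable Galois (binding) group of the generic fibre $q_b$ over $b\cup\mathcal{P}$ to be trivial, which by the standard theory of internality (Chapter 7 of \cite{Pillay-book}) is exactly $\mathcal{P}$-definability of $q_b$. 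I expect this step to be the whole difficulty; the rest is routine.
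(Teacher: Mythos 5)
Your $(2)\Rightarrow(3)$ and $(3)\Rightarrow(1)$ are essentially the paper's arguments and are fine (the paper's $(3)\Rightarrow(1)$ is exactly your ``unwinding'' of $\mathcal{P}_{\mathrm{int}}$-realizations: pick $e$ with $c\in\dcl(e,\mathcal{P})$, $e\ind c$, and $e\ind_c a$). The problem is $(1)\Rightarrow(2)$, which you correctly identify as the crux but do not actually prove. Your reduction turns it into showing that, when $\mathcal{P}=\mathcal{P}_{\mathrm{int}}$, the witness $e$ can be dropped entirely; your proposed mechanism for this --- take a Morley sequence $(e_k)$ of witnesses and collapse $\bigcap_k\dcl(b,e_k,\mathcal{P})$ --- founders exactly where you say it does, and the repair you sketch does not work as stated. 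Applying uniform relative internality to the coordinates of $e_k$ yields $e_k\in\dcl(\pi(e_k),e',\mathcal{P})$ for yet another witness tuple $e'$; it does not ``rewrite the $\mathcal{P}$-internal content of $e_k$ using only realizations from $\mathcal{P}$'' but merely reintroduces an auxiliary parameter, so you never obtain independence of the $e_k$ over $b\cup\mathcal{P}$ and the regress does not terminate. Your alternative ``binding group is trivial'' formulation is likewise a restatement of the goal (triviality of the Galois group of $q_b$ relative to $\mathcal{P}_{\mathrm{int}}$ \emph{is} $\mathcal{P}_{\mathrm{int}}$-definability of $q_b$, i.e.\ is (3)), not an argument for it --- and you concede as much.

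The missing idea is a canonical base. Write $a\in\dcl(\pi(a),e,c)$ with $c$ from $\mathcal{P}$ and $e\ind a$, and set $t=\cb(\stp(ac/e))$. Then $ac\ind_t e$ gives $a\in\dcl(\pi(a),t,c)$, so $t$ is still a witness (and $t\in\acl(e)$ keeps it independent from $a$); and $t$ lies in the definable closure of a Morley sequence $(a_ic_i)_{i\le n}$ in $\stp(ac/e)$, where the $c_i$ realize types in $\mathcal{P}$ and $a_1\cdots a_n\ind t$, so $\tp(t/\emptyset)$ is $\mathcal{P}$-internal by definition. This one move replaces your entire Morley-sequence/intersection scheme and makes $(1)\Rightarrow(2)$ a short computation. (A minor structural point in addition: even if your reduction succeeded, it would deliver (3) rather than (2), and passing from (3) to (2) needs the observation that the tuple $e$ produced in $(3)\Rightarrow(1)$ can be taken to have $\mathcal{P}$-internal type.)
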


\begin{proof}

(1) implies (2). By assumption, there are $a \models q$, a tuple $e$, independent from $a$ over $\emptyset$, and $c \in \mathcal{P}$, such that $ a \in dcl(\pi(a), e, c)$. Consider $t = Cb(\stp(ac/e))$. 

By properties of canonical bases, we have $ac \ind_t e$, thus $a \in dcl(t,\pi(a),c)$. As $t \in acl(e)$, we also know that $a$ is independent from $t$ over $\emptyset$. Since $q$ is stationary, for any $b \models q$ independent from $t$ over $\emptyset$, we have $b \in dcl(t,\pi(b),\mathcal{P})$. 

As for internality of $\tp(t)$, recall that $t \in dcl((a_ic_i)_{i = 1 \cdots n})$, for $(a_ic_i)_{i = 1 \cdots n}$ a Morley sequence in $\stp(ac/e)$. Because $a \ind e$, we can prove, by induction and forking calculus, that $a_1 \cdots a_n \ind e$, and therefore $a_1 \cdots a_n \ind t$. Lastly, since $\tp(c_i/\emptyset) = \tp(c/\emptyset)$ for all $i$, we see that $c_i \in \mathcal{P}$ for all $i$. This, combined with the independence previously obtained, yields  $\mathcal{P}$-internality of $\tp(t/\emptyset)$.

To replace $t$ by an internal tuple, recall the following general fact: if $\tp(t/\emptyset)$ is almost $\mathcal{P}$-internal, there is $d \in dcl(t)$ such that $\tp(d/\emptyset)$ is $\mathcal{P}$-internal and $t \in acl(d)$. Replacing the previously obtained $t$ by $d$, we get the result.
\newline
(2) implies (3): Let $a$ be a realization of $q$, and $t$ as in (1). There is a tuple $c$ of realizations of $\mathcal{P}$ such that $a \in dcl(\pi(a),t,c)$. As the  type $\tp(c,t/\emptyset)$ is in $\mathcal{P}_{\mathrm{int}}$,  $\tp(a/\pi(a))$ is $\mathcal{P}_{\mathrm{int}}$-definable. A straightforward automorphism argument yields the same for any other realization of $q$.
\newline
(3) implies (1): Let $a$ be such that $\tp(a/\pi(a))$ is $\mathcal{P}_{\mathrm{int}}$-algebraic. There is a tuple $c$ of realizations of $\mathcal{P}_{\mathrm{int}}$ such that $a \in dcl(\pi(a),c)$. 

By definition of $\mathcal{P}_{\mathrm{int}}$, there is a tuple $e$ such that $c \in dcl(e,\mathcal{P})$ and $e \ind c$. By choosing $e$ independent from $a$ over $c$, we have that $e \ind a$. As $a \in dcl(\pi(a),e,\mathcal{P})$, thus $(q,\pi)$ is uniformly relatively $\mathcal{P}$-internal.

The same arguments work for the variation  in the last sentence of the Proposition. 

\end{proof}

One of the simplest ways for a  fibration $(q, \pi)$ to be uniformly $\mathcal{P}$-internal is to be (naturally) interdefinable with a product of $\pi(q)$ and a $\mathcal{P}$-internal type.  More precisely:

\begin{Definition}

The internal fibration $(q,\pi)$ is called:

\begin{itemize}
    \item Trivial if there is $a \models q$ and $c \in dcl(a)$ such that $\pi(a) \ind c$ and $a \in dcl(\pi(a),c)$, with $r = \tp(c/\emptyset)$ internal to $\mathcal{P}$. Thus $q$ is in definable bijection with $\pi(q) \otimes r$.
    \item Almost trivial if there is $a \models q$ and $c \in \acl(a)$ such that $\pi(a) \ind c$ and $a \in acl(\pi(a),c)$, with $\tp(c)$ internal to $\mathcal{P}$. Thus $q$ is interalgebraic with $\pi(q) \otimes r$.
\end{itemize}

\end{Definition}

In some cases, this is the only way to obtain uniform (almost) internality.

\begin{Definition}

Fix a family of partial types $\mathcal{P}$ over $\emptyset$. A stationary type $p(x)$ (over $\emptyset$ say) is:
\begin{itemize} 
    \item  nonorthogonal to ${\mathcal P}$ if for some set $A$ of parameters, and some (any) realization $a$ of $p|A$, there is some tuple $c$ of realizations of types in ${\mathcal P}$ such that $a$ forks with $c$ over $A$. 
    \item non weakly orthogonal to ${\mathcal P}$ if we can choose $A = \emptyset$ above. Namely for some (any) realization $a$ of $p$, there is a tuple $c$ from ${\mathcal P}$ such that $a$ forks with $c$ (over $\emptyset$).
\end{itemize}
  
Note that nonorthogonality of a complete stationary type $p$ to a family ${\mathcal P}$ of partial types is often ``called $p$ is not foreign to ${\mathcal P}$" in the literature (such as \cite{Pillay-book}).

\end{Definition}

\begin{Fact} In the context of Definition  3.12, $p(x)$ is nonorthogonal to ${\mathcal P}$ if and only if $p(x)$ is non weakly  orthogonal to ${\mathcal P}_{int}$.
\end{Fact}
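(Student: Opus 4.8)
The statement to prove is Fact 3.13: a stationary type $p$ is nonorthogonal to $\mathcal{P}$ if and only if $p$ is non weakly orthogonal to $\mathcal{P}_{\mathrm{int}}$. The plan is to prove each direction separately, exploiting the characterization of nonorthogonality (non-foreignness) in terms of forking.

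For the ``if'' direction, suppose $p$ is non weakly orthogonal to $\mathcal{P}_{\mathrm{int}}$. Then for $a \models p$ there is a tuple $c$ with $\tp(c/\emptyset) \in \mathcal{P}_{\mathrm{int}}$ (so $\stp(c/\emptyset)$ is $\mathcal{P}$-internal) such that $a$ forks with $c$ over $\emptyset$. By internality, write $c \in \dcl(B, \bar{d})$ where $B$ is independent from $c$ over $\emptyset$ and $\bar{d}$ is a tuple of realizations of types in $\mathcal{P}$. Set $A = B$ (or rather, an appropriate base over which to test nonorthogonality). Since $a \nind_\emptyset c$, after moving $B$ to be also independent from $a$ over $\emptyset$ (and independent from the pair $(a,c)$ appropriately — one needs to be slightly careful here, but an extension/transitivity argument works since $B \ind_\emptyset c$), we get that $a$ forks with $\bar{d}$ over $B$: otherwise $a \ind_B c\bar{d}$ would force $a \ind_B c$, and combined with $a \ind_\emptyset B$ (and $B \ind_\emptyset c$ giving $a \ind_\emptyset Bc$... ) would contradict $a \nind_\emptyset c$. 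Since $\bar{d}$ is a tuple from $\mathcal{P}$, this witnesses nonorthogonality of $p$ to $\mathcal{P}$ with parameter set $A = B$.

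For the ``only if'' direction, suppose $p$ is nonorthogonal to $\mathcal{P}$. Then there is a set $A$ and $a \models p|A$ and a tuple $\bar{d}$ of realizations of types in $\mathcal{P}$ with $a \nind_A \bar{d}$. I want to produce, working over $\emptyset$, a tuple $c$ with $\tp(c/\emptyset)$ $\mathcal{P}$-internal such that $a' \models p$ forks with $c$ over $\emptyset$. The natural candidate is to take $c = \cb(\stp(a/A, \bar{d}))$ or, more directly, to use the standard fact that non-foreignness to $\mathcal{P}$ implies there is a nontrivial ``$\mathcal{P}$-internal'' quotient: specifically, let $c$ be a canonical base type capturing the dependence. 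A cleaner route: by the theory of non-foreignness (as in Chapter 7 of Pillay's book), $p$ nonorthogonal to $\mathcal{P}$ implies there is some $a \models p|A$ with $a \nind_A \bar{d}$, $\bar{d}$ from $\mathcal{P}$; then consider $b = \cb(\stp(\bar{d} / A, a))$ — this $b$ satisfies $b \in \dcl(\text{Morley sequence in }\stp(\bar{d}/Aa))$, which makes $\tp(b/\text{that Morley sequence})$, hence $\tp(b)$ over a suitable base, $\mathcal{P}$-internal; and $a \nind b$ over the appropriate base because $b$ records the forking of $\bar{d}$ with $a$. One then descends the base to $\emptyset$ by absorbing parameters, using that $p$ is over $\emptyset$ and stationary, to get non weak orthogonality of $p$ to $\mathcal{P}_{\mathrm{int}}$.

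\textbf{Main obstacle.} The hard part is bookkeeping the base sets and independence relations when passing between ``over $A$'' and ``over $\emptyset$'': nonorthogonality allows an arbitrary parameter set $A$, while non weak orthogonality to $\mathcal{P}_{\mathrm{int}}$ insists on $\emptyset$, and the gain is precisely that $\mathcal{P}_{\mathrm{int}}$ is closed under the operation of ``internalizing a Morley sequence,'' which lets one absorb the auxiliary parameters $A$ (or $B$) into the internal tuple $c$ itself. Getting the forking calculus exactly right — in particular ensuring the constructed internal tuple $c$ genuinely forks with a realization of $p$ over $\emptyset$ rather than over some residual parameters — is the delicate point; everything else is a routine application of the definitions of internality and of canonical bases.
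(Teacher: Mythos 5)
Your ``if'' direction is correct and is essentially the paper's ``almost immediate'' direction, spelled out: choosing the internality data $B$ for $c$ to be independent from $a$ (first from $c$ over $\emptyset$, then moved to be independent from $a$ over $c$), transitivity converts $a \nind_\emptyset c$ into $a \nind_B \bar d$ with $\bar d$ a tuple of realizations of types in $\mathcal P$ and $a \models p|B$, which is exactly nonorthogonality of $p$ to $\mathcal P$.

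The gap is in the ``only if'' direction, and it sits exactly where you flag the ``delicate point.'' Your candidate witness $b = \cb(\stp(\bar d/Aa))$ does satisfy: $\stp(b/\emptyset)$ is $\mathcal P$-internal (it lies in the definable closure of a Morley sequence of realizations of types in $\mathcal P$), and $b \in \acl(Aa)\setminus \acl(A)$, whence $a \nind_A b$. But non weak orthogonality to $\mathcal P_{int}$ requires forking with a realization of $p$ \emph{over $\emptyset$}, and $a \nind_A b$ together with $a \ind_\emptyset A$ does not yield $a \nind_\emptyset b$: the dependence of $b$ on $a$ can be entirely mediated by $A$. (Think of $a,e$ independent generics of a vector space, $A=\{e\}$, $b = a+e$: then $b \in \dcl(Aa)\setminus\acl(A)$ and yet $a \ind_\emptyset b$.) No routine forking calculus repairs this; ``absorbing the parameters $A$'' is precisely the nontrivial content of the statement. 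The paper does not attempt it by hand but invokes Corollary 4.6 of Chapter 7 of \cite{Pillay-book}: if $p=\tp(a/\emptyset)$ is nonorthogonal (non-foreign) to $\mathcal P$, then there is $c \in \dcl(a)\setminus\acl(\emptyset)$ with $\tp(c/\emptyset)$ $\mathcal P$-internal — the existence of a nontrivial $\mathcal P$-internal quotient of $a$ \emph{over the original base}. Since such a $c$ lies in $\dcl(a)\setminus\acl(\emptyset)$, it automatically forks with $a$ over $\emptyset$, and the conclusion is immediate. To complete your argument you must either cite this theorem or reprove it (via binding groups or the $\mathcal P$-analysis machinery); the canonical-base construction alone does not suffice.
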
 
\begin{proof} Right implies left is almost immediate.
For left implies right. Suppose $p= \tp(a/\emptyset)$ is nonorthogonal to ${\mathcal P}$.  By Corollary 4.6 of Chapter 7 in \cite{Pillay-book}, there is $c\in dcl(a)\setminus acl(\emptyset)$ such that $\tp(c/\emptyset) \in {\mathcal P}_{int}$. So $a$ forks with $c$ over $\emptyset$ and we are finished.

\end{proof}

We have the following which will be behind many of the applications in subsequent sections: 

\begin{Proposition} \label{equivalence generale}  Suppose that $\pi(q)$ is orthogonal to ${\mathcal P}$. Then the following are equivalent:
\begin{enumerate}
\item  $(q,\pi)$ is uniformly relatively almost ${\mathcal P}$-internal,
\item $(q,\pi)$ is almost trivial.
\end{enumerate}
Moreover if for some (any) $a \models q$, the type $\tp(a/\pi(a))$ has $U$-rank one, then (1) and (2) are also equivalent to

(3)  $q$ is nonorthogonal to ${\mathcal P}$.

\end{Proposition}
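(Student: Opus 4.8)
The plan is to prove the chain of implications $(2) \Rightarrow (1) \Rightarrow (3) \Rightarrow (2)$, with $(3)$ available only under the $U$-rank one hypothesis. The implication $(2) \Rightarrow (1)$ is essentially immediate from the definitions: if $(q,\pi)$ is almost trivial, witnessed by $c \in \acl(a)$ with $\pi(a) \ind c$, $\tp(c)$ internal to $\mathcal{P}$, and $a \in \acl(\pi(a),c)$, then taking $c$ itself (or a finite tuple of internality data for $\tp(c)$ together with $c$) as the uniform parameter gives $a \in \acl(\pi(a), c, \mathcal{P})$ with $c \ind a$ over $\emptyset$ (since $c \in \acl(a)$ forces $c \ind a$ trivially, and we may arrange $c$ independent from $a$... actually we use that $c \in \acl(a)$ so $\tp(a/c)$-side independence is automatic). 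More carefully, $(2) \Rightarrow (1)$ should be read off from Proposition \ref{unif-int-iff-Pint-alg}: almost triviality is a special case of $\tp(a/\pi(a))$ being $\mathcal{P}_{\mathrm{int}}$-algebraic, namely with the $\mathcal{P}_{\mathrm{int}}$-tuple $c$ independent from $\pi(a)$, so condition (3) of that Proposition holds and hence so does uniform relative almost internality.

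For $(1) \Rightarrow (3)$, assume the $U$-rank one hypothesis on $q_{\pi(a)} = \tp(a/\pi(a))$. By uniform relative almost internality there is a tuple $e \ind a$ with $a \in \acl(\pi(a), e, \mathcal{P})$. Since $a \notin \acl(\pi(a))$ (otherwise there is nothing to prove — if $q$ is already nonorthogonal to $\mathcal{P}$ we are done, and if $a \in \acl(\pi(a))$ then orthogonality of $\pi(q)$ to $\mathcal{P}$ would... one must be slightly careful here) we get that $a$ forks with $(e,c)$ over $\pi(a)$ for some $c$ from $\mathcal{P}$, and since $e \ind a$ and the $U$-rank is one, $a$ in fact forks with $c$ over $\acl(\pi(a), e)$; transitivity of forking and the orthogonality of $\pi(q)$ to $\mathcal{P}$ (which controls the contribution of $\pi(a)$) let us conclude that $\tp(a/\emptyset)$ is nonorthogonal to $\mathcal{P}$. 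The delicate point is to handle the almost-internality parameters correctly: $e$ is not from $\mathcal{P}$, so one shows that since $e$ is independent from $a$ and $\pi(q) \perp \mathcal{P}$, the only way $q_{\pi(a)}$ can acquire $\mathcal{P}$-internality over $e$ is via genuine nonorthogonality of $q$ to $\mathcal{P}$ — this is where the rank-one hypothesis does real work, collapsing "internal over some base" to "nonorthogonal".

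For $(3) \Rightarrow (2)$, again under the $U$-rank one hypothesis: if $q$ is nonorthogonal to $\mathcal{P}$, then by Fact 3.13, $q$ is non weakly orthogonal to $\mathcal{P}_{\mathrm{int}}$, so there is $c' \in \dcl(a) \setminus \acl(\emptyset)$ with $\tp(c'/\emptyset) \in \mathcal{P}_{\mathrm{int}}$ (using Corollary 4.6, Chapter 7 of \cite{Pillay-book} as in the proof of Fact 3.13). Now I would argue that $\pi(a) \ind c'$: indeed $c' \in \dcl(a)$ has $\mathcal{P}$-internal type, while $\pi(a)$ realizes $\pi(q) \perp \mathcal{P}$, so $c' \in \acl(\pi(a))$ would force $\tp(c')$ to be both internal to $\mathcal{P}$ and internal to $\pi(q)$, hence (being non-algebraic and of rank dividing one) nonorthogonal to $\pi(q)$, contradicting $\pi(q) \perp \mathcal{P}$; more precisely $\tp(c'/\pi(a))$ is still $\mathcal{P}$-internal and weakly orthogonal to $\pi(q)$-extensions, giving $c' \ind \pi(a)$. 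Finally $a \in \acl(\pi(a), c')$: since $\tp(a/\pi(a))$ has $U$-rank one and $c' \in \dcl(a) \setminus \acl(\pi(a))$ (the latter because $c' \ind \pi(a)$ and $c' \notin \acl(\emptyset)$), we get $U(a / \pi(a) c') = 0$, i.e. $a \in \acl(\pi(a), c')$. Thus $(q,\pi)$ is almost trivial with internality datum $r = \tp(c'/\emptyset)$.

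The main obstacle I anticipate is the careful bookkeeping in $(1) \Rightarrow (3)$: separating the role of the (non-$\mathcal{P}$) uniform parameter $e$ from genuine $\mathcal{P}$-content, and making sure that the orthogonality $\pi(q) \perp \mathcal{P}$ is invoked correctly so that forking of $a$ with $\mathcal{P}$ over $\pi(a)$ (plus harmless parameters) descends to forking over $\emptyset$. The $U$-rank one hypothesis is exactly what prevents pathologies where $a$ is "partly" over $\pi(a)$ and "partly" $\mathcal{P}$-internal without either piece being visible at the level of $\tp(a/\emptyset)$.
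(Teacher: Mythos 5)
There is a structural gap. Your chain $(2)\Rightarrow(1)\Rightarrow(3)\Rightarrow(2)$ only closes under the $U$-rank one hypothesis, since both $(1)\Rightarrow(3)$ and $(3)\Rightarrow(2)$ use it. But the main assertion of the proposition is that $(1)\Leftrightarrow(2)$ under the orthogonality hypothesis \emph{alone}; your plan gives no proof of $(1)\Rightarrow(2)$ in that generality. The missing idea is a canonical base argument: by Proposition \ref{unif-int-iff-Pint-alg} there is a tuple $t$ of realizations of $\mathcal{P}_{\mathrm{int}}$ with $a\in\acl(\pi(a),t)$; set $s=\cb(\stp(t/a))$. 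Then $s\in\acl(a)$, and $t\ind_s a$ gives $a\in\acl(\pi(a),s)$; since $s$ is in the definable closure of a Morley sequence in $\stp(t/a)$, the type $\tp(s)$ is $\mathcal{P}$-internal, and then $\pi(a)\ind s$ follows from $\pi(q)\perp\mathcal{P}$. This yields almost triviality directly, with no rank hypothesis, and then $(2)\Rightarrow(3)$ is immediate ($s\in\acl(a)\setminus\acl(\emptyset)$ forks with $a$), so your delicate $(1)\Rightarrow(3)$ bookkeeping is not needed.

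Two further points. First, your sketch of $(1)\Rightarrow(3)$ is not only unnecessary but, as written, does not go through: you derive that $a$ forks with $c$ over $(\pi(a),e)$, but nonorthogonality of $q$ to $\mathcal{P}$ (Definition 3.12) requires the base $A$ to be \emph{independent from $a$}, and $\pi(a)\in\dcl(a)$ is certainly not; the appeal to ``transitivity of forking and orthogonality of $\pi(q)$'' to descend to $\emptyset$ is exactly the unproved step. Second, in $(2)\Rightarrow(1)$ the parenthetical claim that $c\in\acl(a)$ ``forces $c\ind a$ trivially'' is false (such a $c$ typically forks with $a$); fortunately your fallback — reading almost triviality as a special case of $\mathcal{P}_{\mathrm{int}}$-algebraicity of $\tp(a/\pi(a))$ and invoking Proposition \ref{unif-int-iff-Pint-alg} — is correct, as is your argument for $(3)\Rightarrow(2)$.
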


\begin{proof}

The (2) implies (1) direction is fairly immediate and does not need the orthogonality of $\pi(q)$ to ${\mathcal P}$: to be more formal, suppose that $a\models  q$ is interalgebraic  with $(\pi(a),c)$ where $\tp(c) \in {\mathcal P}_{int}$, and $\pi(a)$ is independent from $c$ over $\emptyset$. Using the fact that $\tp(c)$ is stationary and ${\mathcal P}$-internal, we can choose $A$ independent from $(a,c)$ over $\emptyset$ such that $c\in dcl(A,d)$ where $d$ is a tuple of realizations of types in ${\mathcal P}$.  As $a\in acl(\pi(a),c)$ we see that $a\in acl(\pi(a),A, d)$ so by Definition 3.3, $(q,\pi)$ is uniformly relatively almost ${\mathcal P}$-internal.

 We now prove (1) implies (2). 

By Proposition \ref{unif-int-iff-Pint-alg}, there is $a \models q$ and a tuple $t$ of realizations of $\mathcal{P}_{\mathrm{int}}$ such that $a \in \acl(\pi(a),t)$. Consider $s = \cb(\stp(t/a))$, we have that $s \in \acl(a)$. As $t \ind_{s} a$, we have that $a \in \acl(\pi(a),s)$. Moreover, we know that $s \in \dcl((t_i)_{i \in \mathbb{N}})$, for some Morley sequence $(t_i)_{i \in \mathbb{N}}$ in $\stp(t/a)$, thus $\tp(s)$ is  $\mathcal{P}$-internal, and $\pi(a) \ind s$ by the orthogonality assumption. 

Now we deal with the moreover part. First assume that $(q,\pi)$ is almost trivial. Let $a \models q$ and $c$ witness almost triviality, namely $\tp(c)\in {\mathcal P}_{int}$, $c$ is independent with $\pi(a)$ and $a$ is interalgebraic with $(\pi(a),c)$.  In particular $c\in acl(a)\setminus acl(\emptyset)$, so $a$ forks with $c$, and clearly $q$ is nonorthogonal to $\mathcal P$. 
(Note that we do not need here the assumptions that $\pi(q)$ is orthogonal to ${\mathcal P}$ or $\tp(a/\pi(a))$ is $U$-rank $1$, but only that $\tp(a/\pi(a))$ is not algebraic.)

Secondly, assume that $q$ is nonorthogonal to $\mathcal P$. Let $a$ realize $q$. By Lemma 4.6 in Chapter 7 of \cite{Pillay-book}, there is $c\in dcl(a)\setminus acl(\emptyset)$ such that $\tp(c)\in {\mathcal P}_{int}$.  By our assumptions $c$ is independent with $\pi(a)$, and thus as $\tp(a/\pi(a))$ has $U$-rank $1$, $a\in acl(\pi(a), c)$. So $a$ and $(\pi(a), c)$ are interalgebraic, giving almost triviality. 

\end{proof}

It is interesting to compare uniform internality with other strengthenings of internality, namely algebraicity and preservation of internality: 

\begin{Definition}

The type $\tp(a/b)$ is said to preserve $\mathcal{P}$ internality if it is stationary and whenever $c$ is a tuple such that $\stp(b/c)$ is almost $\mathcal{P}$-internal, then so is $\stp(a/c)$.

\end{Definition}

We will now restate (and reprove) Proposition 5.17 of \cite{Jimenez} in our current vocabulary:

\begin{Proposition}\label{unif_int_implies_preserve}
Suppose $\tp(a/b)$ is uniformly almost $\mathcal{P}$-internal, then $\tp(a/b)$ preserves internality to ${\mathcal P}$.
\end{Proposition}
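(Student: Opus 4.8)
The statement to prove is Proposition~\ref{unif_int_implies_preserve}: if $\tp(a/b)$ is uniformly almost $\mathcal{P}$-internal, then it preserves internality to $\mathcal{P}$. So suppose $c$ is any tuple with $\stp(b/c)$ almost $\mathcal{P}$-internal; we must show $\stp(a/c)$ is almost $\mathcal{P}$-internal. The plan is to use the uniformity witness to eliminate the base-point dependence, and then feed the almost $\mathcal{P}$-internality of $\stp(b/c)$ through it. First I would invoke Definition~\ref{unif-int-definition} (in the ``almost'' form, with the fibration being $(\tp(ab/\emptyset),\pi)$ where $\pi$ projects onto $b$): there is a finite tuple $e$ of parameters such that for any realization $(a',b')$ of $\tp(ab/\emptyset)$ with $(a',b') \ind e$, we have $a' \in \acl(b', e, \mathcal{P})$. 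By Proposition~\ref{unif-int-iff-Pint-alg} (the ``almost/algebraic'' version), I may moreover take $e$ so that $\tp(e/\emptyset)$ is $\mathcal{P}$-internal — equivalently, $\tp(a/b)$ is $\mathcal{P}_{\mathrm{int}}$-algebraic, so there is a tuple $c_0$ of realizations of $\mathcal{P}_{\mathrm{int}}$ with $a \in \acl(b,c_0)$; this second formulation is the cleanest to run with.

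\textbf{Main argument.} Given $c$ with $\stp(b/c)$ almost $\mathcal{P}$-internal, take the tuple $c_0$ of realizations of $\mathcal{P}_{\mathrm{int}}$ with $a \in \acl(b, c_0)$, chosen (by an automorphism over $\emptyset$, using stationarity of $\tp(a/b)$ and hence of $\tp(ab/\emptyset)$) so that $c_0 \ind_{ab} c$, i.e.\ $c_0$ is independent from $c$ over $ab$. Now $\stp(b/c)$ is almost $\mathcal{P}$-internal by hypothesis, and each coordinate of $c_0$ realizes a type in $\mathcal{P}_{\mathrm{int}}$, i.e.\ its type over $\emptyset$ — hence also over $c$ — is (almost) $\mathcal{P}$-internal; therefore $\stp(b c_0 / c)$ is almost $\mathcal{P}$-internal, as almost $\mathcal{P}$-internality is preserved under the join of two almost $\mathcal{P}$-internal types (this is a standard fact; see Chapter~7 of \cite{Pillay-book}). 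Concretely: pick $B_1$ independent from $b$ over $c$ with $b \in \acl(c, B_1, \mathcal{P})$, and $B_2$ independent from $c_0$ over $\emptyset$ (hence we may arrange over $c$) with $c_0 \in \dcl(B_2, \mathcal{P})$; arranging $B_2 \ind_c b B_1$ as well, the combined data $B_1 B_2$ witnesses almost $\mathcal{P}$-internality of $\stp(b c_0/c)$. Finally, since $a \in \acl(b, c_0) \subseteq \acl(b c_0)$, the type $\stp(a/c)$ is a quotient (up to interalgebraicity) of the almost $\mathcal{P}$-internal type $\stp(b c_0 /c)$, hence is itself almost $\mathcal{P}$-internal. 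That gives preservation of internality.

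\textbf{The delicate point.} The routine-looking parts (join of internal types is internal; quotients/algebraic extensions of almost internal types stay almost internal) are genuinely standard, but the step requiring care is the \emph{bookkeeping of independence} when combining the witness $B_1$ for $\stp(b/c)$ with the witness $B_2$ for the internality of $c_0$: one must choose $c_0$, $B_1$, $B_2$ in the right order so that all the ``independent from $\ldots$ over $c$'' clauses in the definition of internality hold simultaneously, without circularity. The clean way is: first fix $a,b$; then realize $c$ with $\stp(b/c)$ almost internal and grab $B_1$; then realize $c_0$ over $\emptyset$ with $a \in \acl(b,c_0)$ and $c_0 \ind_{ab} c B_1$ (possible by stationarity of $\tp(a/b)$); then grab $B_2$ with $c_0 \in \dcl(B_2,\mathcal{P})$, $B_2 \ind c_0$, and $B_2 \ind_{c_0} a b c B_1$. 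A short forking computation then shows $B_1 B_2 \ind_c b c_0$, which is exactly what is needed. I expect this independence juggling — rather than any conceptual difficulty — to be the only thing that needs to be written out with care; everything else is a direct appeal to Proposition~\ref{unif-int-iff-Pint-alg} and the basic calculus of internality.
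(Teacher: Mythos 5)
Your proof is correct and follows essentially the same route as the paper: invoke Proposition~\ref{unif-int-iff-Pint-alg} to replace the internality parameters by a tuple of realizations of $\mathcal{P}_{\mathrm{int}}$ with $a\in\acl(b,c_0)$, then combine the almost $\mathcal{P}$-internality of $\stp(b/c)$ and $\stp(c_0/c)$ and pass to $\acl$. The independence bookkeeping you flag as delicate is in fact not needed — since $\tp(c_0/\emptyset)$ is $\mathcal{P}$-internal it remains almost $\mathcal{P}$-internal over any $c$ whatsoever, which is why the paper's proof is three lines.
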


\begin{proof}  This is basically immediate from \ref{unif-int-iff-Pint-alg}. For the assumptions give $e\in {\mathcal P}_{int}$ such that $a\in acl(b,e)$.   Let $c$ be such that $\stp(b/c)$ is almost ${\mathcal P}$-internal. Then working over $c$ we have almost ${\mathcal P}$-internality of $\stp(b)$ and $\stp(e)$. As $a\in acl(b,e)$, also $\stp(a)$ is almost ${\mathcal P}$-internal. 
\end{proof} 

It is natural to ask about the reverse implication in Proposition \ref{unif_int_implies_preserve} and we will give several counterexamples in the next sections.  In fact giving such counterexamples of varying kinds is among the themes of this paper.

Also remark that $\mathcal{P}$-algebraicity is strictly stronger than uniform almost $\mathcal{P}$-internality (and likewise with $\mathcal P$-definability and without the almost). It is easy to produce examples. Here is one. Work in the theory $\mathrm{DCF}_0$ of differentially closed fields. Let $k$ be an algebraically closed field of constants (over which we work). Let $d\in k$ be nonzero, and let $a$, $b\neq 0$ in the differential closure of $k$ be such that $a' = d$ and $b'/b = d$. Let $q = \tp(a,b/k)$ and $\pi$ be the projection on the second coordinate.  So $q$ is internal to the constants, whereby $(q,\pi)$ is uniformly relatively internal to the constants.  The (differential) Galois theory tells us that $a$ and $b$ are independent over $k$, hence $a$ is not in the algebraic closure of $k,b$ and some constants (as all constants in the differential closure of $k$ are in $k$).

\section{The canonical base property}

In this section we discuss the relationship of uniform relative almost internality to the canonical base property, in particular a more restrictive notion called the {\em strong} canonical base property in \cite{Palacin-Pillay}. 

The canonical base property, as formulated in \cite{Moosa-Pillay}, concerns superstable theories $T$ where every type has finite $U$-rank, which we abbreviate by  {\em $T$ is superstable of finite rank.} This could make sense for an arbitrary stable theory $T$ by considering the many-sorted structure consisting of $\emptyset$-definable sets of in which every type has finite $U$-rank.  There are also formulations for simple theories, etc... But let us for now focus on the \cite{Moosa-Pillay} context of $T$ superstable of finite rank, although we will subsequently make some additional restrictions.  We will also use \cite{Moosa-Pillay} as a reference for the notions being discussed here. 

We will start by letting ${\mathcal P}$ denote the family of non locally modular types of $U$-rank $1$.  This is a $\emptyset$-invariant family, but is NOT a family of types over $\emptyset$, so does not yet fit into the context of the previous section.  

\begin{Definition}
$T$ (superstable of finite rank)  is said to have the canonical base property (CBP) if whenever $a,b$ are tuples in the monster model such that $b = \mathrm{Cb}(\stp(a/b))$, we have that $\tp(b/a)$ is almost $\mathcal{P}$-internal. 

\end{Definition}

The discovery of this notion was very influenced by results in complex geometry. See \cite{Pillay-Campana} for the background. In \cite{Pillay-Ziegler}  the CBP was proved for the finite rank part of $\mathrm{DCF}_0$, yielding a quick account of function field Mordell-Lang in characteristic $0$.  In \cite{Hrushovski-Palacin-Pillay} a theory $T$ without the CBP was given. 

%It is natural to inquire about similar conditions, replacing almost internality with the stronger properties of algebraicity, preservation of internality and uniform internality. The two former are well understood, and we will discuss them below.

The uniform canonical base property (UCBP) was first formulated in \cite{Moosa-Pillay}, influenced again by complex geometric considerations, in particular the notion of a Moishezon morphism.   Here is a slight strengthening, which was proved by Chatzidakis \cite{Chatzidakis} to be {\em equivalent} to the CBP.

\begin{Definition}\label{uniform-internality}

$T$ (superstable of finite rank) has the uniform canonical base property (UCBP) if for any tuple $a,b$ such that $b = \mathrm{Cb}(\stp(a/b))$ then $\tp(b/a)$ preserves $\mathcal{P}$-internality, in the sense that for any $c$ if $\stp(a/c)$ is $\mathcal P$-internal so is $\stp(b/c)$. 

\end{Definition}

%An important result of Chatzidakis, which completely settles the question in the case of preservation of internality, is:

%\begin{Theorem}

%A superstable theory $T$ has the CBP if and only if it has the UCBP.

%\end{Theorem}

\subsection{The strong CBP} Given the material in the previous section, it is natural to ask to ask whether in place of $\tp(b/a)$ preserving $\mathcal{P}$-internality in Definition \ref{uniform-internality}, one could ask for  $\tp(b/a)$ to be uniformly ${\mathcal P}$-internal.   However, according to our conventions in Section 3, this only makes sense when ${\mathcal P}$ is a collection of partial types over $\emptyset$. This is why we will pass to a more restrictive context, as described in \cite{Palacin-Pillay}. 
%In the case of uniform internality, we could ask:

%\begin{Question}

%If $T$ has the CBP and $b = \mathrm{Cb}(\mathrm{stp}(b/a))$, is $\tp(b/a)$ uniformly (almost) $\mathcal{P}$-internal ?

%\end{Question}

\begin{Notation}

Let $T$ be superstable theory of finite rank. We say that $T$ satisfies $(*)$ if any non locally modular stationary $U$-rank one type is nonorthogonal to $\emptyset$.

If $T$ satisfies $(\ast)$, we denote by $\mathcal Q$  the family of types consisting of $\tp(a/\emptyset)$, where $a \in \mathbb{M}$ and $\stp(a/\emptyset)$ is internal to the family of non locally modular $U$-rank one types. 
\end{Notation}

So $T$ satisfies $(\ast)$ if for every non locally modular $U$-rank one type $p$, there is a type $q$ over $\emptyset$ such that $p$ and $q$ are non orthogonal. In \cite{Palacin-Pillay}, this condition is also called nonmultidimensional with respect to non locally modular rank one types.

What did we gain? The family $\mathcal Q$ is a family of types over $\emptyset$ while $\mathcal P$ is only an $\emptyset$-invariant family. Moreover, we have (see  Lemma 3.3 of \cite{Palacin-Pillay}):
%
%Now consider the family of types consisting of $tp(a/\emptyset)$, where $a \in \mathbb{M}$ and $stp(a/\emptyset)$ is internal to the family of non locally modular $U$-rank one types. Note that $\mathcal{Q}$ is a family of types over $\emptyset$, thus we can study uniform $\mathcal{Q}$-internality. The following proposition, which is contained in  of Palacin-Pillay, shows that under assumption (*), replacing $\mathcal{P}$ with $\mathcal{Q}$ is reasonable:

\begin{Proposition}

If $T$ satisfies $(*)$, then almost internality to $\mathcal{P}$ (the family of non locally modular types of $U$-rank $1$) is equivalent to almost internality to $\mathcal{Q}$.

\end{Proposition}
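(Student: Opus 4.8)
The plan is to prove a biconditional: a type $\stp(a/\emptyset)$ is almost internal to $\mathcal{P}$ if and only if it is almost internal to $\mathcal{Q}$. One direction is essentially free: by definition $\mathcal{Q}$ consists of types over $\emptyset$ whose realizations are (almost, or even exactly) internal to $\mathcal{P}$, so any tuple of realizations of types in $\mathcal{Q}$ lies in $\acl(B,\mathcal{P})$ for a suitable parameter set $B$ independent in the appropriate sense; hence almost $\mathcal{Q}$-internality of $\stp(a/\emptyset)$ immediately gives almost $\mathcal{P}$-internality by transitivity of (almost) internality. This is the same bookkeeping already used in the proof of Proposition \ref{unif-int-iff-Pint-alg}, direction (3) implies (1).

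For the substantive direction, suppose $\stp(a/\emptyset)$ is almost $\mathcal{P}$-internal. By definition there is a parameter set $B$ with $a \ind B$ and a tuple $c$ of realizations of non locally modular $U$-rank one types (over possibly large parameters) with $a \in \acl(B,c)$. The goal is to replace the ``wild'' witnesses $c$ — whose types live over arbitrary parameters — by realizations of types in $\mathcal{Q}$, i.e.\ types over $\emptyset$. The key step is this: for each coordinate, the type of that realization is a non locally modular $U$-rank one type $p$ over some set $A$; by $(*)$, $p$ (or rather the stationarization of $\tp$ of that element over $\emptyset$, after absorbing $A$ into the base $B$) is nonorthogonal to $\emptyset$, so there is a $U$-rank one type $p_0$ over $\emptyset$ nonorthogonal to $p$. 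Nonorthogonality of $U$-rank one types means that over a suitable common base, realizations of $p$ and $p_0$ are interalgebraic; thus, after enlarging $B$ appropriately (keeping $a \ind B$ — this requires choosing the extra parameters independently from $a$, which is possible by extension), we may replace each realization of $p$ appearing in $c$ by a realization of $p_0$, an $\emptyset$-type. Iterating over the finitely many coordinates of $c$, we obtain $a \in \acl(B', c')$ with $B'$ still independent from $a$ and $c'$ a tuple of realizations of $\emptyset$-definable $U$-rank one (non locally modular) types. Such $c'$ has type in $\mathcal{Q}$, so $\stp(a/\emptyset)$ is almost $\mathcal{Q}$-internal.

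I expect the main obstacle to be the careful management of parameters in the replacement step: when one passes from a non locally modular rank one type over $A$ to a nonorthogonal one over $\emptyset$, the interalgebraicity holds only after adding parameters (a ``common nonforking extension'' witness), and one must check that these parameters can be chosen independently from $a$ so as not to destroy the relation $a \ind B$ that is part of the definition of internality. This is a standard-but-delicate forking-calculus argument, and doing all the coordinates at once rather than one at a time requires a simultaneous independence choice. The non locally modular hypothesis is what guarantees the interalgebraicity (nonorthogonal non locally modular $U$-rank one types become interalgebraic over a suitable base), and $(*)$ is exactly what supplies the needed $\emptyset$-type in the first place; these are the two load-bearing inputs. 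For the cleanest writeup I would cite the relevant structure theory of $U$-rank one types (e.g.\ from \cite{Pillay-book}) for the interalgebraicity of nonorthogonal minimal types and otherwise follow the transitivity-of-internality pattern already in use in Section 3.
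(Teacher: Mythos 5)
First, note that the paper does not actually prove this Proposition: it is quoted from Lemma 3.3 of \cite{Palacin-Pillay}, so there is no in-paper argument to compare yours against. Your overall strategy (easy direction by transitivity of almost internality; hard direction by replacing each rank-one witness by a realization of a nonorthogonal minimal type over $\emptyset$) is the natural one and almost certainly matches the cited proof in shape. The easy direction, the interalgebraicity of nonorthogonal minimal types over a suitably chosen common base, the independence bookkeeping, and the observation that a non locally modular $U$-rank one type over $\acl(\emptyset)$ belongs to $\mathcal{P}$ and hence has its restriction to $\emptyset$ in $\mathcal{Q}$, are all fine. (One small correction: interalgebraicity over a common base is a consequence of nonorthogonality plus $U$-rank one alone; non local modularity is not what buys you that. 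Its real role is to guarantee that the type over $\acl(\emptyset)$ you land on is itself in $\mathcal{P}$, via the standard fact that local modularity is a nonorthogonality invariant of minimal types.)

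The genuine gap is the sentence ``$p$ is nonorthogonal to $\emptyset$, so there is a $U$-rank one type $p_0$ over $\emptyset$ nonorthogonal to $p$.'' Condition $(*)$, as glossed in the paper, only supplies \emph{some} type $q$ over $\emptyset$ with $p \not\perp q$, with no control on $U(q)$. Upgrading this to a $U$-rank one type over $\acl(\emptyset)$ is exactly where the content of the Proposition lives, and it is not a formal consequence of the definitions: if one runs the obvious argument (take $c\models p|D$, $d\models q|D$ with $c\in\acl(Dd)$, and pass to $e=\cb(\stp(d/\acl(Dc)))$, which is interalgebraic with $c$ over $D$ and internal to $q$), one obtains a rank-one type over $D$ whose restriction to $\acl(\emptyset)$ may still fork over $\acl(\emptyset)$ or have large rank, so the induction does not obviously close. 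This reduction of nonorthogonality to $\emptyset$ to nonorthogonality to a \emph{minimal} type over $\acl(\emptyset)$ is a real lemma in the finite-rank superstable setting (standard over models, and needing an argument over arbitrary algebraically closed bases), and you neither prove it nor point to a precise statement. Everything else in your write-up is routine forking calculus; this one step is load-bearing and should either be proved or pinned to an exact reference (it is in effect the content of the cited Lemma 3.3 of \cite{Palacin-Pillay}).
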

%
%On the other hand, Remark 4.3 of Chapter 7 of \cite{Pillay-book} says:
%
%\begin{Fact} ${\mathcal Q}_{int} = {\mathcal Q}$.
%\end{Fact}
%
%It then follows from Proposition 3.10 that:
%\begin{Fact}  A pair $(q,\pi)$ is uniformly almost relative ${\mathcal Q}$-internal iff $(q,\pi)$ is ${\mathcal Q}$-algebraic.
%
%
%\end{Fact}

The following definition,  is from \cite{Palacin-Pillay}:

\begin{Definition}
(Assume $T$ is superstable of finite rank satisfying (*).)  $T$ has the strong canonical base property (strong CBP)  if for any $a,b$ with $\tp(a/\emptyset)$ finite and $b = \mathrm{Cb}(\stp(a/b))$, we have that $\tp(b/a)$ is $\mathcal{Q}$-algebraic.
\end{Definition}

One of the reasons for introducing the strong CBP was to get an exact and robust (under naming parameters) equivalence to the rigidity of Galois groups relative to ${\mathcal Q}$.

%(See below.) But notice that we have now come full circle and related  the strong CBP to  uniform relative almost internality: 
%
%\begin{Remark} ($T$ superstable finite rank with (*).)   $T$ has the (strong) CBP if and only whenever  $b = Cb(stp(a/b)$, then $(q,\pi)$ is (uniformly) relatively almost internal to ${\mathcal Q}$. 
%\end{Remark} 
%
%So far in this section we have just been playing around with definitions and equivalent formulations.  But we will now try to say something substantial. In \cite{Palacin-Pillay} we claimed that (the finite rank part of) $\mathrm{DCF}_0$  did NOT have the strong CBP (although of course to has the CBP).  It was a Galois-theoretic proof, but we made the mistake of confusing Galois theory relative to the constants ${\mathcal C}$ with Galois theory relative to ${\mathcal C}_{int} = \mathcal Q$.  The former is the ``classical" Picard-Vessiot theory and Kolchin's strong normal theory, whereas the latter has not really been studied systematically.  Anyway we will correct the mistake here, giving a proper proof that $\mathrm{DCF}_0$ does not have the strong CBP. 

We will be working with definable automorphism groups or definable Galois theory.  We refer the reader to the introduction to \cite{Palacin-Pillay} and Fact 1.1.  Roughly speaking if ${\mathcal Q}$ is a family of partial types over $\emptyset$ and $p$ is a stationary type over $\emptyset$ which is internal to ${\mathcal Q}$ (in say an ambient stable theory) then the group of permutations of the set of realizations of $p$ induced by automorphisms of $\mathbb M$ which fix  (the collection of realizations of types in) ${\mathcal Q}$ pointwise is isomorphic (as a group action) to a type-definable over $\emptyset$-group $G$ with $\emptyset$-definable action on the set of realizations of $p$. 
We could of course work over a set $A$ of parameters instead of $\emptyset$. 
 We call $G$ the Galois group of $p$ relative to ${\mathcal Q}$.

When $T$ is a superstable finite rank theory satisfying (*) and $\mathcal Q$ is as defined earlier (types over $\emptyset$ internal to the family of non locally modular rank $1$ types), then  one of the main theorems of \cite{Palacin-Pillay} is:
\begin{Theorem} \label{strongCBP-rigidity} Let $T$ be a theory satisfying $(\ast)$. $T$ has the strong CBP iff whenever $p$ is a stationary type over a set $A$ of parameters which is $\mathcal Q$-internal, and $G$ is the Galois group of $p$ relative to ${\mathcal Q}$, then $G$ is {\em rigid}, namely every connected type-definable subset of $G$ is type-defined over $acl(A)$. 
\end{Theorem}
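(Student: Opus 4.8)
The plan is to prove this as an equivalence chain, bridging through the notion of uniform relative almost internality developed in Section 3. The key realization is that the condition ``$b = \mathrm{Cb}(\stp(a/b))$ implies $\tp(b/a)$ is $\mathcal{Q}$-algebraic'' is, after unwinding, exactly a statement about uniform relative almost $\mathcal{Q}$-internality of a certain fibration, and that (by Proposition \ref{equivalence generale} and Proposition \ref{unif-int-iff-Pint-alg}) this is governed by triviality/nonorthogonality phenomena that translate into rigidity of the relevant Galois group. So first I would fix a $\mathcal{Q}$-internal stationary type $p$ over $A$ (WLOG $A = \acl(A)$, and by the robustness of the strong CBP under adding parameters we may even take $A = \emptyset$ after a harmless base change), let $G$ be its Galois group relative to $\mathcal{Q}$, and set up the standard correspondence: realizations of $p$ together with the ``fundamental system'' give a homogeneous space for $G$, so that $G$ is itself (type-)definable over some internality parameters, say in $\dcl(\bar{a})$ for $\bar{a}$ a Morley sequence of realizations of $p$.

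For the direction \emph{rigidity $\Rightarrow$ strong CBP}: take $a, b$ with $\tp(a)$ finite and $b = \mathrm{Cb}(\stp(a/b))$. By the CBP itself (which holds under $(\ast)$ in the relevant cases — or rather, one shows it as part of the argument, since $(\ast)$ plus the structure of $\mathcal{Q}$ gives almost $\mathcal{Q}$-internality of $\stp(b/a)$), we know $\stp(b/a)$ is almost $\mathcal{Q}$-internal; the issue is to upgrade ``almost internal'' to ``$\mathcal{Q}$-algebraic'', i.e. to remove the dependence on internality parameters beyond $a$. Here I would apply Proposition \ref{unif-int-iff-Pint-alg}: $\mathcal{Q}$-algebraicity of $\tp(b/a)$ is equivalent to uniform relative almost $\mathcal{Q}$-internality of the pair $(\tp(ab),\pi)$ with $\pi$ the projection to $a$. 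The canonical-base machinery tells us $\tp(a/b)$ has $a \ind_b$ (anything independent from $b$ over $a$'s type data), and running the Galois group of $\stp(b/a)$ relative to $\mathcal{Q}$: rigidity of this group forces every connected type-definable subgroup to be defined over $\acl(a)$ (not just over internality parameters), which is precisely what kills the need for extra parameters and delivers $\mathcal{Q}$-algebraicity. The technical heart is the translation between ``the internality parameter $e$ can be taken in $\acl(a)$'' and ``the connected components of the Galois action are $\acl(a)$-definable''; this is where one invokes the binding-group/groupoid dictionary.

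For the converse \emph{strong CBP $\Rightarrow$ rigidity}: suppose $G = \mathrm{Gal}(p/\mathcal{Q})$ for some $\mathcal{Q}$-internal $p$ over $A = \acl(A)$, and suppose for contradiction that $H \leq G$ is a connected type-definable subgroup not defined over $A$. Let $b$ be a (canonical parameter for a) generic coset or rather a canonical base of the locus of $H$ over $A$, chosen so that $b \notin \acl(A)$; realize $p$ by a suitable tuple $a$ (or a finite tuple of realizations) so that $b \in \dcl(A, a)$ — this is possible precisely because $G$ lives over the internality data coming from realizations of $p$ — while $b = \mathrm{Cb}(\stp(a/b))$ up to the usual adjustments. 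Connectedness of $H$ means $\tp(b/A)$ has the property that $\stp(b/a)$, while almost $\mathcal{Q}$-internal (as $b$ is coded in the Galois-theoretic data), is \emph{not} $\mathcal{Q}$-algebraic: the connected subgroup genuinely moves, so $b$ cannot be pinned down in $\acl(a, \mathcal{Q})$ over $A$ without the internality parameters. This contradicts the strong CBP. Packaging $b$ so that $\tp(b/\emptyset)$ (or over $A$) is finite, as the strong CBP requires, needs a small amount of care — one works with a code for $H$ inside the appropriate imaginary sort.

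The main obstacle I anticipate is precisely this last point in both directions: making the Galois group $G$ and its type-definable subgroups ``visible'' as ordinary finite tuples $a, b$ satisfying the canonical-base hypothesis $b = \mathrm{Cb}(\stp(a/b))$ with $\tp(a/\emptyset)$ finite, and conversely reading rigidity of $G$ off of the $\mathcal{Q}$-algebraicity statement. The correspondence is morally the content of \cite{Palacin-Pillay}, but one must be careful that (i) the passage $\mathcal{P} \rightsquigarrow \mathcal{Q}$ via $(\ast)$ genuinely identifies the two notions of almost internality (this is the quoted Proposition, Lemma 3.3 of \cite{Palacin-Pillay}), and (ii) the Galois group of $\stp(b/a)$ relative to $\mathcal{Q}$ and the Galois group of the ambient internal type $p$ are related in the right way — essentially the former is a quotient/piece of the latter, and rigidity passes between them. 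Everything else — the forking calculus, the Morley-sequence arguments producing internal coding tuples, the use of Proposition \ref{unif-int-iff-Pint-alg} and \ref{equivalence generale} — is routine given the framework already in place.
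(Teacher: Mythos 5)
First, a point of reference: the paper does not prove this theorem at all --- it is quoted as one of the main results of \cite{Palacin-Pillay}, so there is no in-paper proof to match your argument against. Judged on its own merits, your sketch has the right overall architecture (translate $\mathcal{Q}$-algebraicity of $\tp(b/a)$ into uniform relative almost $\mathcal{Q}$-internality via Proposition \ref{unif-int-iff-Pint-alg}, and build a canonical-base configuration out of a non-rigid group for the converse), but the key mechanism you propose for the direction ``rigidity $\Rightarrow$ strong CBP'' is wrong. You claim that rigidity of the Galois group of $\stp(b/a)$ relative to $\mathcal{Q}$ ``kills the need for extra parameters and delivers $\mathcal{Q}$-algebraicity.'' By the paper's own Remark 4.10, $\mathcal{Q}$-algebraicity of $\tp(b/a)$ (equivalently, uniform relative almost $\mathcal{Q}$-internality) corresponds to the Galois group being \emph{finite}, which is strictly stronger than rigid: $\mathbb{G}_a$ and $\mathbb{G}_m$ are rigid, and Section 5 of this paper exhibits $\mathcal{C}$-internal types whose binding groups are rigid one-dimensional groups yet which are not uniformly relatively almost internal. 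So rigidity of that single binding group cannot by itself yield the conclusion; the hypothesis $b = \mathrm{Cb}(\stp(a/b))$ must be exploited in an essential, structural way (this is the real content of the Palac\'{\i}n--Pillay argument), and your sketch does not say how.

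There is a second gap in the same direction: you start from ``by the CBP itself\ldots we know $\stp(b/a)$ is almost $\mathcal{Q}$-internal,'' but the CBP is neither a hypothesis of the theorem nor a consequence of $(\ast)$ (there are finite-rank theories without the CBP). What is available unconditionally is only Chatzidakis's theorem that $\stp(b/a)$ is \emph{analyzable} in the non-locally-modular rank-one types; deriving almost internality from rigidity of all binding groups is itself a nontrivial step that your proposal acknowledges parenthetically but does not carry out. The converse direction (strong CBP $\Rightarrow$ rigidity) is closer to correct in outline --- code a connected subgroup $H$ not defined over $\acl(A)$ by the canonical parameter $b$ of a generic coset and a generic point $a$ of that coset, so that $b$ is interalgebraic with $\mathrm{Cb}(\stp(a/b))$ --- but you still owe an argument that $\tp(b/a)$ fails to be $\mathcal{Q}$-algebraic beyond the assertion that ``the connected subgroup genuinely moves,'' and an argument that the relevant coset data can be realized inside the ambient theory with $a$ a finite tuple. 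As it stands the proposal names the difficulties rather than resolving them.
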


Remark that the rigidity of a type-definable group doesn't depend on the choice of the set $A$ of parameters over which it is type-defined. Similarly, a point made in \cite{Palacin-Pillay} is that a theory $T$ has the strong CBP iff $T_{A}$  has the strong CBP where $T_{A}$ is the theory of ${\mathcal M}$ with constants (names) for a subset $A$ of $\mathcal M$.

\begin{Definition}
Let $\mathcal P^0$ be any family of types over $\emptyset$ and $(q,\pi)$ a relatively $\mathcal P^{0}$-internal pair.
We say that a pair $(q,\pi)$ is rigid over $\mathcal P^{0}_{int}$  if the Galois group of $q_{\pi(a)}$ relative to $\mathcal P^{0}_{int}$ is rigid.
\end{Definition}

\begin{Remark} 
Let $\mathcal P^0$ be any family of types over $\emptyset$ and $(q,\pi)$ a relatively $\mathcal P^{0}$-internal pair, then $(q,\pi)$ is uniformly $\mathcal P^{0}$-internal (resp. almost $\mathcal P^{0}$-internal) if and only if the Galois group of $q_{\pi(a)}$ relatively to $\mathcal P^{0}_{int}$ is trivial (resp. finite).
\end{Remark}

So if $(q,\pi)$ is uniformly  almost $\mathcal P^0$-internal then it is rigid over $\mathcal P^0_{int}$.
Here, ${\mathcal Q}_{int} = {\mathcal Q}$ (see Remark 4.3 of Chapter 7 of \cite{Pillay-book})  so that Theorem \ref{strongCBP-rigidity} reads as: the failure of the strong CBP is witnessed by the existence of relatively $\mathcal P$-internal pairs $(q,\pi)$ (over $\emptyset$) satisfying a strong form of non-uniform relative internality, namely which are not rigid over $\mathcal Q$.

So far in this section we have just been playing around with definitions and equivalent formulations.  But we will now try to say something substantial by constructing pairs $(q,\pi)$ which are not rigid over $\mathcal Q$ in $\mathrm{DCF}_0$ and $\mathrm{CCM}$. We start with the case of (the finite rank part of) $\mathrm{DCF}_0$, fixing a gap in \cite{Palacin-Pillay} where an argument going from Galois theory relative to the constants ${\mathcal C}$ to Galois theory relative to ${\mathcal C}_{int} = \mathcal Q$ is missing.

\subsection{$\mathrm{DCF}_0$}

So $T$ is (the finite rank part of) $\mathrm{DCF}_0$, which is totally transcendental whereby all type-definable groups are definable.  We will assume familiarity with differential Galois theory, which is the theory of definable automorphism groups in the special case of $\mathrm{DCF}_0$. The paper \cite{Leon-Sanchez-Pillay} has a discussion of differential Galois theory which includes definitions and details around the material below  (Picard-Vessiot, strongly normal, extensions of differential fields). 

\begin{Proposition} There exists a relatively internal $(q,\pi)$ in $\mathrm{DCF}_0$ with Galois group over ${\mathcal C}_{int} = \mathcal Q$   (definably isomorphic) to $PGL(2, \mathcal C)$ (working over some field $K$).  In particular, (the finite rank part of) $\mathrm{DCF}_0$ does not have the strong $\mathrm{CBP}$.
\end{Proposition}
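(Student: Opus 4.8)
The plan is to combine Theorem~\ref{strongCBP-rigidity} with an explicit relatively internal pair coming from differential Galois theory. Since the strong CBP is insensitive to naming parameters (as recalled after Theorem~\ref{strongCBP-rigidity}), it suffices to produce, over some differential field $K$, a relatively $\mathcal{C}$-internal pair $(q,\pi)$ for which the relative Galois group $\mathrm{Gal}(q_{\pi(a)}/\mathcal{Q})$, where $\mathcal{Q}=\mathcal{C}_{int}$, fails to be rigid; we aim to make this group equal to $PGL(2,\mathcal{C})$, which is not rigid because its Borel subgroups form a connected family parametrised by $\mathbb{P}^{1}(\mathcal{C})$, so there are connected definable subgroups not defined over $\acl(K)$.

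For the construction I would take $K=\overline{\Q}$ and choose $b$ with two properties: (i) $\tp(b/K)$ is orthogonal to the constants $\mathcal{C}$, and (ii) the linear equation $\mathcal{E}_{b}\colon 2u''+bu=0$ has Picard--Vessiot group $SL(2,\mathcal{C})$ over $K\langle b\rangle$; such a $b$ can be obtained by imposing a sufficiently generic second order linear equation on the function field of a $D$-curve whose generic type is orthogonal to the constants. Let $u_{1},u_{2}$ be a fundamental system of solutions of $\mathcal{E}_{b}$ with Wronskian $1$, set $a=u_{1}/u_{2}$, let $\pi$ be the Schwarzian derivative (an $\emptyset$-definable partial function) and $q=\tp(a/K)$. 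By the classical formula $\{u_{1}/u_{2}\}=b$ one has $\pi(a)=b$, and $q_{\pi(a)}=\tp(a/K\langle b\rangle)$ (passing to $\acl(K\langle b\rangle)$ where stationarity is needed). The realisations of $q_{\pi(a)}$ are the solutions of $\{y\}=b$, which form a torsor under $PGL(2,\mathcal{C})$ acting by M\"obius transformations over the constants; hence $q_{\pi(a)}$ is $\mathcal{C}$-internal with internality data $u_{1},u_{2}$, and by differential Galois theory $\mathrm{Gal}(q_{\pi(a)}/\mathcal{C})=SL(2,\mathcal{C})/\{\pm I\}=PGL(2,\mathcal{C})$.

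It then remains to upgrade this to $\mathrm{Gal}(q_{\pi(a)}/\mathcal{C}_{int})=PGL(2,\mathcal{C})$, which is exactly the step missing in \cite{Palacin-Pillay}. Since $\mathcal{C}_{int}$ is an $\emptyset$-invariant family containing $\mathcal{C}$ at the level of realisations, fixing $\mathcal{C}_{int}$ pointwise is more restrictive than fixing $\mathcal{C}$ pointwise, and a routine conjugation argument shows that $\mathrm{Gal}(q_{\pi(a)}/\mathcal{C}_{int})$ is a normal subgroup of $\mathrm{Gal}(q_{\pi(a)}/\mathcal{C})=PGL(2,\mathcal{C})$; as the latter is simple, $\mathrm{Gal}(q_{\pi(a)}/\mathcal{C}_{int})$ is trivial or all of it. Suppose it were trivial. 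Then, by the Remark relating triviality of the relative Galois group over $\mathcal{C}_{int}$ to uniform relative internality together with Proposition~\ref{unif-int-iff-Pint-alg}, the pair $(q,\pi)$ would be uniformly relatively $\mathcal{C}$-internal; as $\pi(q)=\tp(b/K)$ is orthogonal to $\mathcal{C}$, Proposition~\ref{equivalence generale} would then force $(q,\pi)$ to be almost trivial, so $a$ would be interalgebraic with $\pi(a)\otimes r$ for some $\mathcal{C}$-internal type $r$ based over $\emptyset$. Tracing this back (and using that, via the Wronskian, $u_{1},u_{2}\in\acl(K\langle b,a\rangle)$, so the Picard--Vessiot extension $K\langle b,u_{1},u_{2}\rangle$ of $\mathcal{E}_{b}$ is interalgebraic over $K\langle b\rangle$ with $a$), this would descend that Picard--Vessiot extension to a constants-internal extension of $\overline{\Q}$ amalgamated freely with $K\langle b\rangle$, forcing $u_{1},u_{2}$ to be algebraic over $K\langle b\rangle\cup\mathcal{C}$ — contradicting that $\mathcal{E}_{b}$ has the full semisimple Galois group $SL(2,\mathcal{C})$ over $K\langle b\rangle$, equivalently that its binding group relative to $\mathcal{C}$ is infinite. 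Hence $\mathrm{Gal}(q_{\pi(a)}/\mathcal{Q})=PGL(2,\mathcal{C})$ is not rigid, and Theorem~\ref{strongCBP-rigidity} yields that the finite rank part of $\mathrm{DCF}_{0}$ does not have the strong CBP.

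The genuinely delicate point — and the content the omitted argument in \cite{Palacin-Pillay} must supply — is precisely this passage from Galois theory relative to $\mathcal{C}$ to Galois theory relative to the larger family $\mathcal{C}_{int}=\mathcal{Q}$: a priori, fixing $\mathcal{C}_{int}$ could collapse the relative Galois group, and excluding this requires combining the normality-and-simplicity observation, Proposition~\ref{equivalence generale} (whence the insistence that $\pi(q)$ be orthogonal to the constants), and a differential Galois theoretic input to the effect that a generic second order linear equation is not solved, even after adjoining all constants, over a constants-internal extension of its base. One should also verify separately that a $b$ simultaneously satisfying (i) and (ii) exists.
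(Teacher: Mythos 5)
Your overall architecture is the same as the paper's: produce a relatively $\mathcal{C}$-internal pair whose Galois group relative to $\mathcal{C}$ is $PGL(2,\mathcal{C})$, observe that the Galois group relative to the $\emptyset$-invariant family $\mathcal{Q}=\mathcal{C}_{int}$ is a normal subgroup, use simplicity to reduce to excluding triviality, and then derive a contradiction from triviality via the uniform-relative-internality dictionary. That normality-plus-simplicity step is exactly how the paper closes the gap in \cite{Palacin-Pillay}, so you have identified the right key idea. Where you diverge is in the choice of example and in the mechanism for excluding triviality. The paper takes $K=\C(t)$ with $t'=1$, so its base type $\tp(t/\C)$ is itself $\mathcal{C}$-internal; it then uses Proposition \ref{unif_int_implies_preserve} (uniform relative internality implies preservation of internality) to conclude that $\tp(b,t/\C)$ would be $\mathcal{C}$-internal, and derives the contradiction from the structure theory of strongly normal extensions of an algebraically closed constant field (their Galois groups are linear and commutative, whereas the group at hand has $PGL(2,\C)$ as a subquotient). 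You instead insist that the base $\tp(b/K)$ be \emph{orthogonal} to $\mathcal{C}$ so that Proposition \ref{equivalence generale} applies and triviality of the relative Galois group forces almost triviality of the fibration, which you then contradict against the full $SL(2,\mathcal{C})$ Picard--Vessiot group of the fibre (this last step is essentially the statement that a connected binding group cannot become finite after an independent base extension; your phrase ``algebraic over $K\langle b\rangle\cup\mathcal{C}$'' should really be ``over $K\langle b\rangle\cup A\cup\mathcal{C}$ for some independent $A$'', but connectedness of $PGL_2$ rescues the contradiction). Your route has the virtue of staying entirely inside the machinery of Section 3 and avoiding the classification of strongly normal extensions of constant fields; the paper's route has the virtue of resting on a single classical input, the inverse Picard--Vessiot problem over $\C(t)$.

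The genuine gap is the existence of your $b$. You need, simultaneously, (i) $\tp(b/K)$ orthogonal to the constants and (ii) the Picard--Vessiot group of $2u''+bu=0$ over $K\langle b\rangle$ equal to all of $SL(2,\mathcal{C})$, and you only gesture at ``imposing a sufficiently generic second order linear equation on the function field of a $D$-curve''. This is not a routine verification: computing the Galois group of a second-order linear equation over a differential field such as $\overline{\Q}\langle b\rangle$ with $b'=f(b)$ requires a Kovacic-style analysis ruling out Liouvillian solutions, and nothing in your write-up supplies it. Since the entire contradiction in your third paragraph hinges on the group being the full $SL(2,\mathcal{C})$ rather than a proper (e.g.\ Borel or finite) subgroup, the proof is incomplete without this construction. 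Note that this is precisely the difficulty the paper's choice of $K=\C(t)$ is designed to avoid: over $\C(t)$ the existence of a Picard--Vessiot extension with group $PGL(2,\C)$ is classical, at the cost of a base that is internal rather than orthogonal to the constants, which is why the paper must replace your appeal to Proposition \ref{equivalence generale} by the preservation-of-internality and strongly-normal-extension argument.
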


Note that  $PGL(2,{\mathcal C})$ is not rigid. For example there  are infinitely many Borel subgroups (maximal connected solvable) and these are all conjugate.  So in particular $(q,\pi)$ is not rigid over $\mathcal Q$. Hence the second part of the statement follows from the first part and Theorem \ref{strongCBP-rigidity}.

\begin{proof}
Work in a saturated (if one wishes) model $\mathbb U$ of $\mathrm{DCF}_0$, with field of constants ${\mathcal C}$. Let  $C$ be some small algebraically closed field of constants such as $\mathbb C$.  Let $K = \C(t)$, where $\partial(t) = 1$.  It is well-known that there exists a Picard-Vessiot extension $L$ of $K$ such that  $Aut_{\partial}(L/K)$ is  $PGL(2,\C)$.  What this means is that $L$ is contained in the prime model over $K$ (whose field of constants is $\C$), $L$ is generated 
by a tuple $b$ such that $\stp(b/K)$ is internal to ${\mathcal C}$ in the strong form that $b_{1}\in dcl(K,b,{\C})$ whenever $b_{1}$ has the same type as $b$ over $K$, and the Galois group of $\stp(b/K)$ with respect to ${\mathcal C}$ is definably (in $\mathcal U$) isomorphic to $PGL(2,{\mathcal C})$. Restricted to the prime model over $K$ this gives the isomorphism of $Aut_{\partial}(L/K)$ with $PGL(2,\mathbb{C})$.  

We set $q = \tp(b,t/\mathbb{C})$ and $\pi$ the projection on the $t$-coordinate. By construction, $(q,\pi)$ is relatively $\mathcal C$-internal and $q_{\pi(a)}  = \tp(b/K)$.

\begin{Claim}
 $PGL(2,{\mathcal C})$ is also (definably isomorphic to) the Galois group of $\tp(b/K)$ relative to ${\mathcal Q}$
\end{Claim}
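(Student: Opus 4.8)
The point of the claim is that enlarging the "base family" from the constants $\mathcal{C}$ to $\mathcal{Q} = \mathcal{C}_{int}$ does not change the Galois group of $\tp(b/K)$, where $K = \mathbb{C}(t)$ and $b$ generates the Picard--Vessiot extension $L/K$ with $\mathrm{Aut}_\partial(L/K) \cong PGL(2,\mathcal{C})$. In general the Galois group of a $\mathcal{Q}$-internal type relative to $\mathcal{Q}$ is a quotient of its Galois group relative to $\mathcal{C}$ (since fixing $\mathcal{Q}$ pointwise is a stronger condition than fixing $\mathcal{C}$ pointwise, so there are fewer such automorphisms), so what must be shown is that this quotient map $G_{\mathcal{C}} \to G_{\mathcal{Q}}$ is in fact an isomorphism. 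Equivalently, every automorphism of $\mathbb{M}$ fixing $K$ and $\mathcal{C}$ pointwise already fixes pointwise the full collection of realizations (in $\acl(K\mathcal{C})$, or in $\dcl(K,b,\mathcal{C})$) of types over $\emptyset$ internal to $\mathcal{C}$ that are relevant here; i.e. there is "nothing new over $\mathbb{C}$" to be moved.

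**Key steps.** First I would recall that $L = \mathbb{C}(t, b)$ sits inside the prime (differential closure) model over $K$, whose field of constants is exactly $\mathbb{C}$. So the only constants appearing anywhere inside $L$, or inside $\acl(L)$, are those in $\mathbb{C}$ itself. Second, I would identify precisely what the Galois group of $\tp(b/K)$ relative to $\mathcal{Q}$ is: it is the group of permutations of the realizations of $\tp(b/K)$ induced by $\sigma \in \mathrm{Aut}(\mathbb{M}/K)$ with $\sigma$ fixing $\mathcal{Q}$ (all realizations of $\emptyset$-types internal to $\mathcal{C}$) pointwise. Third — the crux — I would argue that such a $\sigma$ automatically fixes $\mathcal{C}$ pointwise. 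This should follow because every individual constant $c \in \mathcal{C}$ has $\tp(c/\emptyset)$ internal to $\mathcal{C}$ (trivially: $c \in \dcl(\mathcal{C})$), hence realizes a type in $\mathcal{Q}$; so $\sigma$ fixing $\mathcal{Q}$ pointwise forces $\sigma|_{\mathcal{C}} = \mathrm{id}$. Conversely, since $L$ is contained in the prime model over $K$ and that prime model has constant field $\mathbb{C}$, any $\sigma \in \mathrm{Aut}(\mathbb{M}/K,\mathcal{C})$ maps realizations of $\tp(b/K)$ into $\dcl(K, b, \mathbb{C})$ and, when restricted appropriately, fixes every realization of any $\emptyset$-type internal to $\mathcal{C}$ that can be reached over $K$ — because such a realization lives in $\acl(K\mathcal{C})$ and is a $\partial$-algebraic function whose "constant content" over $K$ is already in $\mathbb{C}$, hence is fixed once $K$ and $\mathbb{C}$ are fixed. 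Putting these together, the two groups $\mathrm{Aut}(\mathbb{M}/K,\mathcal{C})|_{\tp(b/K)}$ and $\mathrm{Aut}(\mathbb{M}/K,\mathcal{Q})|_{\tp(b/K)}$ coincide, so the Galois groups coincide, and both equal $PGL(2,\mathcal{C})$.

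**Main obstacle.** The delicate point is the "nothing new over $\mathbb{C}$" step: showing that fixing $K$ and $\mathcal{C}$ pointwise really does fix pointwise every realization of every $\emptyset$-definable $\mathcal{C}$-internal type that is definable/algebraic over $\{K, b, \mathcal{C}\}$. Abstractly one wants: if $e$ realizes a type in $\mathcal{Q}$ and $e \in \acl(K, b, \mathcal{C})$, and $\sigma$ fixes $K \cup \mathcal{C}$ pointwise, then $\sigma(e) = e$. The clean way to see this is via differential Galois theory itself: $\tp(b/K)$ being Picard--Vessiot (strongly normal) means $L$ is the *total* field of definition of the internality, so any $\mathcal{C}$-internal datum algebraic over $Lb\mathcal{C}$ is in fact algebraic over $L = K(b)$ already (the field of constants of $L$ being $\mathbb{C} \subseteq K$ is what makes this work — there are no "extra" constants to generate new internal types), and then being fixed over $K$ with its action on $b$ trivial forces it fixed. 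An alternative, more purely model-theoretic route is to invoke that $\mathcal{Q}_{int} = \mathcal{Q}$ together with the fact that the relevant internal types all factor through the single strongly normal extension $L/K$, so the binding group does not grow. I would present the differential-algebraic version as the main argument since it is the most transparent, and remark that the general phenomenon is exactly why Proposition~4.3 of Chapter 7 of \cite{Pillay-book} (giving $\mathcal{Q}_{int} = \mathcal{Q}$) is being invoked in the surrounding text.
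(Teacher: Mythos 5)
Your proposal has a genuine gap, and it sits exactly at the point that this Claim exists to address. The easy direction is that the Galois group relative to $\mathcal{Q}$ is contained in the one relative to $\mathcal{C}$ (it is a subgroup, by the way, not a quotient: fixing $\mathcal{Q}$ pointwise fixes $\mathcal{C}$ pointwise since each constant realizes a type in $\mathcal{Q}$). The hard direction is to rule out that it is a \emph{proper} subgroup, i.e.\ to rule out that $b\in\dcl(K,e)$ for some tuple $e$ of realizations of types in $\mathcal{Q}$. Your "nothing new over $\mathbb{C}$" step asserts that any such relevant $e$ must lie in $\acl(K\mathcal{C})$ (or in the prime model over $K$), but there is no reason for this: the tuple $e$ witnessing $\mathcal{Q}$-definability of $\tp(b/K)$ is allowed to be an arbitrary element of $\mathbb{M}$ whose type over $\emptyset$ is $\mathcal{C}$-internal, typically independent from $b$ over $K$ and living far outside the prime model over $K$. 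Whether such an $e$ can exist is precisely the question of uniform relative internality of $(q,\pi)$ (Proposition \ref{unif-int-iff-Pint-alg}), which is the whole content of the Claim; so your key step is essentially equivalent to the conclusion rather than a proof of it. This is exactly the gap in \cite{Palacin-Pillay} that the paper says it is repairing: passing from Galois theory relative to $\mathcal{C}$ to Galois theory relative to $\mathcal{C}_{int}=\mathcal{Q}$ is not automatic, and the citation of $\mathcal{Q}_{int}=\mathcal{Q}$ in the surrounding text is used for a different purpose.

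The paper's actual argument is quite different and you should note its two ingredients. First, since $\mathcal{Q}$ is $\emptyset$-invariant, the Galois group relative to $\mathcal{Q}$ is a \emph{normal} subgroup of $PGL(2,\mathcal{C})$, hence by simplicity it is either trivial or everything; this reduces the Claim to ruling out triviality. Second, triviality would make $(q,\pi)$ uniformly relatively internal, hence (by Proposition \ref{unif_int_implies_preserve}) $\tp(b,t/\mathbb{C})$ would be internal to the constants, yielding a strongly normal extension $L_1$ of $\mathbb{C}$ containing $L$; a structural analysis shows $L_1$ would be a Picard--Vessiot extension of $\mathbb{C}$ with non-commutative Galois group (it surjects onto $PGL(2,\mathbb{C})$), contradicting the fact that every Picard--Vessiot extension of an algebraically closed field of constants has commutative Galois group. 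Neither the normality/simplicity reduction nor the commutativity of Galois groups of PV extensions of constant fields appears in your proposal, and without some substitute for them the argument does not close.
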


As $\tp(b/K)$ is also internal to $\mathcal Q$ and $\mathcal Q$ contains ${\mathcal C}$, it follows that the Galois group of $\tp(b/K)$ relative to ${\mathcal Q}$ is a subgroup of  $PGL(2,{\mathcal C}$). This subgroup is normal as $\mathcal Q$ is $\emptyset$-invariant. Hence, it is either trivial or the full $PGL(2, {\mathcal C})$  (by simplicity of $PGL(2,{\mathcal C})$). So let us assume that the Galois group of $\tp(b/K)$ relative to ${\mathcal Q}$ is trivial and get a contradiction.

By the remark above, this means that $(q,\pi)$ is uniformly relatively internal. So Proposition \ref{unif_int_implies_preserve} implies that $q = \tp(b,t/\mathbb{C})$ is internal to the constants. Notice that $\tp(b,t/\C)$ is isolated. So we can find a fundamental system $((b_{1}, t_{1})...,(b_{n},t_{n}))$ of realizations of $\tp(b,t)/\C)$ in the prime model $M$ (say) over $\C$ (with $(b,t) = b_{1}, t_{1})$), which therefore generates a so-called strongly normal extension $L_{1}$ of $\C$ with of course $\C\leq K\leq L \leq L_{1}$.

We will now argue that $L_{1}$ has to be a Picard-Vessiot extension of $\C$, which means precisely that $Aut_{\partial}(L_{1}/\C)$, as a (connected) algebraic group $G$  in/over $\C$ is {\em linear}. To do so, we study the structure of $L_{1}/\C$ and $G$.  Note that $t_{i}' = 1$ for $i=1,..,n$ hence $\C(t_{1},..,t_{n}) = K = \C(t)$, which we know to be a PV extension of $\C$, with Galois group $\G_{a}(\C) = (\C,+)$.  The general differential Galois theory yields a surjective homomorphism (of algebraic groups) from $G$ to $\G_{a}(\C)$, with kernel $H$ say.  Now each $K(b_{i})$ is a PV extension of $K$ with Galois group $PGL(2,\C)$ and any $\sigma\in Aut_{\partial}(L_{1},K)$ is determined by its action on each $K(b_{i})$. Hence $H$ embeds into the $n$-fold product $PGL(2,\C) \times  .. \times PGL(2,\C)$, whereby $H$ is linear. So $G$ is an extension of a linear algebraic group by a linear algebraic group so is linear.

Finally note that $G$ is not commutative as $H$ surjects onto the Galois group of $L/K$ which is $PGL(2,\C)$. On the other hand, it is well-known that the Galois group of any PV extension of an algebraically closed field of constants is commutative (in fact, with unipotent radical of dimension at most $1$).  Indeed, the PV-extension associated to $Y' = AY$ where $A$ has coefficients in $\C$ is always contained in $\mathbb{C}(t,e^{\lambda_1t}, \ldots e^{\lambda_kt})$ where $\lambda_1,\ldots, \lambda_k$ are the eigenvalues of $A$. So we get a contradiction, which proves the claim and the proposition.
\end{proof}
%We will actually just work in $\mathrm{DCF}_0$ but only with definable sets,  types, and parameter sets, of finite $U$-rank. 
%In $\mathrm{DCF}_0$ there is a unique, up to nonorthogonality, nonlocally modular strongly minimal set, namely the constants ${\mathcal C}$ (defined by $\partial(x) = 0$).  It follows that $\mathrm{DCF}_0$ satisfies (*), and moreover that the set ${\mathcal Q}$ of types over $\emptyset$ internal to the family of non locally modular strongly minimal sets, is precisely 
%${\mathcal C}_{int}$, the set of types over $\emptyset$ internal to the constants. 

\subsection{$\mathrm{CCM}$} We consider now the example of $\mathrm{CCM}$, the many-sorted theory of compact complex manifolds.  A good and accessible reference is \cite{Moosa-nonstandard}.    In the standard model of $\mathrm{CCM}$ all elements of all sorts are named by constants.  $\mathrm{CCM}$ is a many-sorted superstable (in fact totally transcendental) theory of finite rank. There is a unique, up to nonorthogonality,  non locally modular strongly minimal  set, the sort ${\mathbb P}_{1}$ of the projective line over $\C$ (we will just call it $P$). It is known that $\mathrm{CCM}$ has the canonical base property \cite{Pillay-Campana}. 

In $\mathrm{CCM}$, the distinction between $\mathcal P$, $\mathcal Q$ and $\mathcal P_{int}$ collapses  over $\emptyset$:

\begin{Lemma} Suppose $T$ is stable, and $P$ is a $\emptyset$-definable set.  Work over a model $M$ (i.e.  add constants for elements of $M$).  Then $P_{int}$-definable (algebraic) coincides with $P$-definable (algebraic)
\end{Lemma}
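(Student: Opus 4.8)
The plan is to unwind the definitions and exploit that over a model $M$, every $P_{int}$-type is already "coordinatized" by $P$ together with parameters from $M$. First I would set up the statement precisely: we want to show that for a $\emptyset$-definable (hence $M$-definable) set $Y$, if $Y \subseteq \dcl(M, P_{int})$ (i.e. $Y$ is $P_{int}$-definable over $M$) then $Y \subseteq \dcl(M, P)$, and likewise with $\acl$. The non-trivial direction is left-to-right, since $P$-definable trivially implies $P_{int}$-definable ($P \subseteq P_{int}$). So fix a realization $a$ of a complete type over $M$ with $a \in \dcl(M, c)$ where $c$ is a tuple of realizations of types in $P_{int}$ — that is, each coordinate $c_j$ has $\stp(c_j/\emptyset) = \stp(c_j/M)$ (since we work over a model, types over $\emptyset$ extend uniquely, but more relevantly) internal to $P$.

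The key step is the following observation about working over a model: if $\tp(c_j/\emptyset)$ is $P$-internal, witnessed by some internality base $B_j$, then by moving $B_j$ by an automorphism over $\emptyset$ we may take $B_j$ independent from $c_j$ over $\emptyset$, and since we are working over the model $M$, we may even absorb a conjugate of the internality data into $M$: more carefully, internality to $P$ of $\tp(c_j/\emptyset)$ means $\tp(c_j/\emptyset)$ is almost $P$-internal with data that can be chosen independent from $c_j$; pushing by an automorphism fixing $\tp(c_j/\emptyset)$ we arrange the data to lie in a copy of itself, and a standard fact (e.g. as used in the proof of Proposition~\ref{unif-int-iff-Pint-alg}) gives that $P$-internality of $\tp(c_j/\emptyset)$ is witnessed over any sufficiently saturated model; since $M$ is a model we get $c_j \in \dcl(M, P)$. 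Hence each $c_j \in \dcl(M,P)$, so $c \in \dcl(M,P)$, and therefore $a \in \dcl(M,c) \subseteq \dcl(M,P)$. The same argument with $\acl$ in place of $\dcl$ throughout handles the algebraic version.

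The one point requiring care — and the main obstacle — is justifying that the internality base for $\tp(c_j/\emptyset)$ can be taken inside the model $M$. This uses that for a $P$-internal type $p$ over $\emptyset$ and any $|T|^+$-saturated model $M \supseteq \emptyset$, one has that the internality is witnessed over $M$ (i.e. realizations of $p$ lie in $\dcl(M,P)$): indeed, internality gives a finite independent tuple $\bar\alpha$ of realizations of $p$ and a $\emptyset$-definable function $f$ with any realization $a$ of $p$ satisfying $a \in \dcl(\pi(a), \bar\alpha, P)$ — here with trivial $\pi$, $a \in \dcl(\bar\alpha, P)$ — and by saturation of $M$ we may realize $\bar\alpha$ inside $M$ (after possibly moving it by an automorphism, which does not change the fact that $\bar\alpha$ is a Morley sequence in $p$). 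Then $a \in \dcl(M,P)$, giving $c_j \in \dcl(M,P)$ as required. I would present this as a short lemma or simply cite Lemma~4.2 of Chapter~7 of \cite{Pillay-book}, which is already invoked for exactly this kind of coordinatization earlier in the paper. With that in hand the argument is purely formal.
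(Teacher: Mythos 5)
There is a genuine gap at the crucial step, namely getting the internality parameters into $M$. Your reduction is fine: it suffices to show that if $\tp(b/M)\in P_{int}$ and $a\in\dcl(M,b)$ then $a\in\dcl(M,P)$. But your justification of that step does not work. First, you assume $M$ is $|T|^{+}$-saturated, which is not a hypothesis of the lemma and fails in the intended application (the lemma is invoked over the standard model of $\mathrm{CCM}$, which is not saturated). Second, and more seriously, even granting saturation the step is impossible as stated: the fundamental system $\bar\alpha$ furnished by Lemma 4.2 of Chapter 7 of \cite{Pillay-book} is a tuple of realizations of $p=\tp(b/M)$, a complete type over the \emph{whole} model $M$; if $p$ is nonalgebraic it is omitted in $M$, so no automorphism fixing $M$ (and no amount of saturation) will place $\bar\alpha$ inside $M$. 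Saturation lets one realize types over small subsets of $M$ inside $M$, not types over $M$ itself.

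The correct mechanism uses only that $M$ is a model, via finite satisfiability rather than realization of the whole internality configuration. Choose the internality base $e$ for $\tp(b/M)$ with $e\ind_M ab$, so that $b=g(e,c)$ and $a=f(b)$ for $\emptyset$-definable $f,g$ and some tuple $c$ from $P$. The single formula $\theta(y,z):=\exists c\,\bigl(c\in P\wedge y=f(g(z,c))\bigr)$ holds of $(a,e)$. Since $e\ind_M a$ and $M$ is a model, $\tp(e/Ma)$ is a coheir of $\tp(e/M)$, i.e.\ finitely satisfiable in $M$; hence some $e'\in M$ satisfies $\theta(a,e')$, giving $a\in\dcl(e',P)\subseteq\dcl(M,P)$. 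Note that $e'$ need not realize $\tp(e/M)$, let alone be a Morley sequence in anything; it only has to satisfy one formula, which is exactly what finite satisfiability provides and what your plan of moving the entire internality data into $M$ cannot. The algebraic case is the same with $\acl$ in place of $\dcl$.
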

\begin{proof} Suppose for example that $q(y)$ (over $\emptyset$, equivalently $M$) is $P_{int}$-definable. So there is $a$ realizing $q$ and $b$ with $\tp(b)\in P_{int}$ such that $a\in dcl(b)$ say $a = f(b)$ for a $\emptyset$-definable function $f$. Now for some $e$ such that $b$ is independent from $e$ over $\emptyset$, we have $b\in dcl(e,c)$ for some tuple $c$ of elements satisfying $P$.  We may assume that $a$ is independent from $e$ too. Then there is a formula $\theta(y,z)$ true of $(a,e)$ over $\emptyset$ and expressing that there are $b,c$ with $c$ a tuple of elements in $P$ and $b\in dcl(z,c)$ and $y = f(b)$.  As $\emptyset$ is a model, there is $e'\in M$ such that $\theta(a,e')$ holds, which suffices.
\end{proof}

%In $CCM$ when we talk about uniform (almost) relative internality, it will always mean with respect to $P$, the sort of the  projective line.  So by Lemma ??, this coincides with relative $P$-definability/algebraicity.  
%Moreover  ${\mathcal Q}$ (the types over $\emptyset$ which are  internal to the family of non locally modular types of $U$-rank $1$) essentially coincides with $P$, again by the Lemma and fact that that ${\mathcal Q} = P_{int}$.
%
%Some questions:
%\begin{Question}  Does $CCM$ have the strong $CBP$?
%\end{Question}
%
%A related problem is:
We are led to:

\begin{Problem}\label{inverseGaloistheory-CCM} Study  inverse Galois theory relative to $P$ in the theory $\mathrm{CCM}$:  which algebraic groups can appear as the Galois group of some stationary type relative to $P$ in the theory $\mathrm{CCM}$?
\end{Problem}

We will provide a positive answer to this problem for $G = PGL(2,\mathcal C)$ whereby showing that CCM does not have the strong CBP and in the process initiate the study of Problem \ref{inverseGaloistheory-CCM} in general. 

We take as $M$ the standard model of $\mathrm{CCM}$ (where by definition all elements are named by constants).  Let ${\mathbb M}$ be a saturated elementary extension.  Let $A$ be a subset of $\mathbb M$, and $p(x)$ a complete stationary type over $A$, which is $P$-internal. So the definable Galois group of $p$ relative to $P$ is definably isomorphic to a definable group in the sort $P({\mathbb M})$ namely an algebraic group $G$ over ${\mathbb C}^{*}$, the nonstandard extension of $\C$, living in ${\mathbb M}$.  So Problem \ref{inverseGaloistheory-CCM} asks what are the possible algebraic groups which can appear. 

Note that when $A = M$ then internality of $p$ implies $P$-definability whereby the Galois group is trivial.  Now consider arbitrary $(p,A)$ where $p = \tp(a/A)$ is  stationary and $P$-internal.  As we know $\mathrm{CCM}$ is totally transcendental there is a finite tuple $b$ from $A$ such that $a$ is independent from $A$ over $b$ and $\tp(a/b)$ is stationary, so also $\tp((a,b)/b)$ is stationary and $P$-internal.  And with $q = \tp(a,b)$ and $\pi$ the projection on the second coordinate, then $(q,\pi)$ is  relatively $P$-internal.

The geometric interpretation is:  $(a,b)$ is the generic (over $M$) point of an irreducible compact complex analytic variety $X$, $b$ the generic point of  some such $Y$ and $\pi$ induces a dominant meromorphic map from $X$ to $Y$.
So in this case we have $\pi:X\to Y$ and we say that the generic fibre is internal to $P$, or that generically we have relative internality to $P$ of $\pi:X\to Y$.  Note that this is on the face of it a model-theoretic notion, as by convention we are taking the generic point $b$ of $Y$ to be living in the nonstandard model $\mathbb M$.

This suggests a couple of additional questions

\begin{Question} \label{Question-CCM} (i) Given a fibration $\pi:X\to Y$ of irreducible compact complex spaces with irreducible fibres (over a Zariski open subset of $Y$), when is the fibration generically relatively internal  to $P$?
\newline
(ii) Assuming generic relative internality, how to compute the Galois group of the generic fibre of the fibration $\pi$ relatively to $P$ (equivalently relatively to $P_{int}$)?
\end{Question}

The issue of internality to $P$ of the generic fibre is somewhat subtle and two important cases are addressed in \cite{Moosa-nonstandard}. 

\begin{itemize} 
\item[(a)] If the fibres of $\pi$ is one dimensional then we indeed have internality to $P$ of the generic fibre.
\item[(b)]  If $\pi$ is a projective bundle over a (irreducible) compact complex manifold $Y$ then we also have internality to $P$ of the generic fibre.
\end{itemize}

 By a projective bundle over a (irreducible) compact complex manifold $Y$, we mean the projectivization $P(E)\to Y$ of a holomorphic vector bundle $E$ (equivalently locally free coherent analytic sheaf) over $Y$. In case (b), it is proved in \cite{Moosa-nonstandard} that in fact a stronger form of generic relative internality holds:
 
\begin{Fact}\label{internalityofprojectivebundle} Let $P(E)\to S$ be a projective bundle where $E$ is a vector bundle of rank $n + 1$.

There is a surjective holomorphic map $\pi:T\to S$ (of compact complex manifolds), and a bimeromorphic map $g:T\times_{S}P(E) \to T\times {\mathbb P}_{n}$ over $T$, such that for general $t\in T$. $g_{t}$ induces an isomorphism (biholomorphic map) between $P(E)_{\pi(t)}$ and ${\mathbb P}_{n}$.  Here $n+1$ is the rank of the vector bundle $E$. 
\end{Fact}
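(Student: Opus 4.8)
The plan is to trivialize the projective bundle $P(E)\to S$ after a suitable proper base change, taking as the new base $T$ a fibrewise compactification of the frame bundle of $E$. First I would form the holomorphic vector bundle $\mathcal{F}=\mathcal{H}om_S(\mathcal{O}_S^{\oplus(n+1)},E)\cong E^{\oplus(n+1)}$ on $S$, of rank $(n+1)^2$, and set $T=\mathbb{P}(\mathcal{F})$ with structure map $\pi\colon T\to S$. Since $S$ is a compact complex manifold and $\mathcal{F}$ is a vector bundle, $T$ is again a compact complex manifold and $\pi$ is a surjective holomorphic submersion; concretely, a point of $T$ over $s\in S$ is the class $[\phi]$ of a nonzero homomorphism $\phi\colon \C^{n+1}\to E_s$, taken up to scalar.

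Next I would single out the subset $U\subseteq T$ of classes $[\phi]$ with $\phi$ invertible. Being an isomorphism is scale-invariant, so $U$ is well defined, and in each fibre of $\pi$ the complement of $U$ is the determinant hypersurface, a proper closed analytic subset; hence $U$ is a dense Zariski-open subset of $T$ and $\pi|_U\colon U\to S$ is still surjective. (``General $t\in T$'' in the statement will mean $t\in U$.) Over $U$ there is a tautological trivialization: an invertible $\phi\colon\C^{n+1}\to E_{\pi(t)}$ induces an isomorphism $\mathbb{P}(\phi)\colon \mathbb{P}_n=\mathbb{P}(\C^{n+1})\xrightarrow{\sim} \mathbb{P}(E_{\pi(t)})=P(E)_{\pi(t)}$, which is independent of the chosen scalar representative and holomorphic in $t$. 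Assembling the inverses $\mathbb{P}(\phi)^{-1}$ fibrewise gives a biholomorphism $g_U\colon U\times_S P(E)\xrightarrow{\sim} U\times\mathbb{P}_n$ commuting with the projections to $U$.

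Finally I would extend $g_U$ to a bimeromorphic map over all of $T$. Both $T\times_S P(E)=\pi^{*} P(E)$ and $T\times\mathbb{P}_n$ are $\mathbb{P}_n$-bundles over the compact complex manifold $T$, hence compact complex manifolds, and $g_U$ is a biholomorphism between the $\pi$-preimages of the dense Zariski-open set $U$. Taking the closure of its graph inside the $(\mathbb{P}_n\times\mathbb{P}_n)$-bundle $(T\times_S P(E))\times_T(T\times\mathbb{P}_n)$ over $T$ yields a compact analytic set that maps bimeromorphically onto each of the two factors, compatibly with the maps to $T$; composing, one gets the desired bimeromorphic map $g\colon T\times_S P(E)\to T\times\mathbb{P}_n$ over $T$ restricting to $g_U$, and for $t\in U$ the induced map $g_t$ is precisely $\mathbb{P}(\phi)^{-1}\colon P(E)_{\pi(t)}\to\mathbb{P}_n$, an isomorphism. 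The step I expect to need the most care is making sure the whole construction stays within compact complex manifolds and proper morphisms — in particular that $T$ itself (not merely the open frame bundle $U$) is compact, which is exactly why one projectivizes $\mathcal{F}$ rather than using the non-proper $GL(n+1,\C)$-frame bundle of $E$; verifying the holomorphy in $t$ of the fibrewise isomorphisms and keeping straight the $\mathbb{P}(-)$ conventions are the remaining routine-but-fiddly points.
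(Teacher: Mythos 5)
The statement you are proving is one the paper itself does not prove: it is quoted as a Fact and attributed to \cite{Moosa-nonstandard}, so there is no in-paper argument to compare against. Your construction is, however, essentially the standard one behind that citation (the ``compactified frame bundle'' trick), and it is correct: taking $T=\mathbb{P}\bigl(\mathcal{H}om_S(\mathcal{O}_S^{\oplus(n+1)},E)\bigr)$ keeps everything proper, the locus $U$ of invertible classes is fibrewise the complement of the determinant hypersurface, and the tautological fibrewise isomorphisms $\mathbb{P}(\phi)$ trivialize $\pi^{*}P(E)$ over $U$. The conclusion for ``general $t$'' then holds on the Zariski-open dense set $U$, which is stronger than what the Fact asserts.

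The one step you pass over too quickly is the assertion that the closure of the graph of $g_U$ is an \emph{analytic} set: in the analytic category the closure of the graph of a map holomorphic off an analytic set need not be analytic (think of $e^{1/z}$), so ``take the closure'' is not by itself a proof that $g$ is meromorphic. Here it is easily repaired because your map is fibrewise algebraic: in a local trivialization $g_U$ sends $([\phi],[v])$ to $([\phi],[\mathrm{adj}(\phi)\,v])$, and the adjugate entries are polynomials, so the graph closure is cut out by the bihomogeneous equations $w_i(\mathrm{adj}(\phi)v)_j=w_j(\mathrm{adj}(\phi)v)_i$ and is analytic. Equivalently, and more intrinsically, the tautological inclusion $\mathcal{O}_T(-1)\hookrightarrow \mathcal{H}om(\mathcal{O}_T^{\oplus(n+1)},\pi^{*}E)$ gives a morphism of rank-$(n+1)$ bundles $\mathcal{O}_T(-1)^{\oplus(n+1)}\to\pi^{*}E$ on $T$ which is an isomorphism exactly over $U$, and a generically invertible morphism of equal-rank bundles induces a bimeromorphic map of their projectivizations $T\times\mathbb{P}_n$ and $T\times_S P(E)$ over $T$; inverting it gives your $g$. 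With that point made explicit, your argument is a complete and self-contained proof of the Fact.
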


The expression general means outside some countable union of proper analytic subvarieties. So this is stronger than generic which means a generic point of $S^{*}$ over $M$ (as the language is not countable). Nevertheless the conclusion of Fact \ref{internalityofprojectivebundle} implies that for generic $s\in S^{*}$ there is a isomorphism (in particular a definable bijection) between the nonstandard fibre $Y^{*}_{s}$ and ${\mathbb P}_{n}(\C^{*})$  defined over an additional parameter $t^{*}$ from $T^{*}$.  We have {\em internality} of the definable set $Y^{*}_{s}$ to the sort $P$ in a strong sense.  This begs the question of what a biholomorphic map between nonstandard compact complex manifolds is, and it means (at least) that graph of the map is Zariski closed as defined in \cite{Moosa-nonstandard}. 

As in the case of a $P$-internal complete type, we have a Galois group of a definable set $Y$ over a given base $A$ where the definable set is internal to $P$. Namely $Gal(Y/A,P)$ is the group of permutations of $Y$ which are elementary over $A, P$, equivalently the group of permutations of $Y$ induced by automorphisms of the ambient saturated model ${\mathbb M}$ which fix $A$ and $P$ pointwise. As with the case of complete types this group has the structure of an $A$-definable group with $A$ definable action, which is definably isomorphic (over additional parameters) to an algebraic group in the sense of the algebraically closed field $\C^{*}$. 

We describe some partial answers to  part (ii) of Question \ref{Question-CCM} in the case of projective bundles.  The first is:
\begin{Lemma} Suppose $Y$ is a projective algebraic variety. Then any projective bundle over $Y$ IS generically uniformly relative internal to $P$.
\end{Lemma}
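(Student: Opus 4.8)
The plan is to reduce the statement to the elementary observation that the generic type of a projective algebraic variety is internal to $P$ in $\mathrm{CCM}$. The crucial point is that when the base $Y$ is projective algebraic, the total space of the bundle is \emph{itself} projective algebraic, so no fibrewise internality data depending on the base point is ever needed, and the "uniform" clause becomes automatic.

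First I would recall the purely geometric input: if $Y$ is projective and $E \to Y$ is a holomorphic vector bundle, then $X := \mathbb{P}(E)$ is again projective. Indeed $\mathcal{O}_{\mathbb{P}(E)}(1)$ is relatively ample over $Y$, so twisting it by the pullback of a sufficiently ample line bundle on $Y$ produces an ample line bundle on $\mathbb{P}(E)$ (equivalently, apply GAGA to replace $E$ by an algebraic vector bundle and use that the projectivization of an algebraic vector bundle over a projective base is projective). Write $\pi \colon X \to Y$ for the bundle projection; what must be shown is that the pair $(q,\pi)$, with $q$ the generic type of $X$ over the standard model $M$, is uniformly relatively $P$-internal.

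Next I would use that $X$, being projective, embeds into some $\mathbb{P}_N$, and that $\mathbb{P}_N$ — more precisely the set $\mathbb{P}_N(\C^{*})$ living in the saturated model — is internal to the sort $P = \mathbb{P}_1$: the algebraically closed field $\C^{*}$ is interpretable in $P$, and, using the standard affine charts together with the normalization "first nonzero homogeneous coordinate equals $1$", $\mathbb{P}_N(\C^{*})$ is in $\emptyset$-definable bijection with a subset of $P_{M}^{eq}$. Consequently, for a generic $a \models q$ the type $\tp(a/M)$ is $P$-internal (in fact $P$-definable): there is a finite tuple $e$ with $e \ind_M a$ and $a \in \dcl(M,e,P)$. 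A fortiori $a \in \dcl(M,\pi(a),e,P)$, so by Definition \ref{unif-int-definition}, applied over the model $M$ (permissible as remarked in Section 3), the pair $(q,\pi)$ is uniformly relatively $P$-internal.

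I do not expect a genuine obstacle here: the uniformity is forced precisely because the algebraicity of $Y$ pushes the entire fibration into $P_{\mathrm{int}}$, in sharp contrast with the non-algebraic base case treated below, where the Galois group of the generic fibre relative to $P$ can be as large as $PGL(2,\mathcal C)$. The only step deserving a line of justification is the passage from a holomorphic vector bundle on the projective base to the projectivity of its projectivization — the GAGA-type input above — after which everything is a direct unwinding of the definitions together with the identification of $P$ with $\mathbb{P}_1(\C)$.
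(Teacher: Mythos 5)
Your proposal is correct and follows the same route as the paper, whose entire proof is the observation that by GAGA a projective bundle over a projective algebraic variety is itself an algebraic variety; you have merely filled in the routine unwinding (projectivity of $\mathbb{P}(E)$, the embedding into $\mathbb{P}_N$, internality of $\mathbb{P}_N(\C^{*})$ to the sort $P$, and the trivial passage from $P$-definability of $\tp(a/M)$ to uniform relative internality of $(q,\pi)$). No gap.
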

\begin{proof} This follows from GAGA \cite{Serre-GAGA}, namely the fact that a projective bundle over an algebraic variety is itself an algebraic variety. 
\end{proof} 

We now describe a very different picture when $Y$ is a strongly minimal locally modular compact complex surface.  \textit{Until the end of this section, we take $Y$ to be a surface (2-dimensional compact complex manifold) strongly minimal and locally modular and $E$ to be a rank $2$ holomorphic vector bundle on $Y$. So $P(E)$ is a 3-dimensional compact complex manifold.}

 The example originates in a paper by Matei Toma \cite{Toma}.  We first give the background notions, basically using definitions in the literature.  We will take a base $Y$ to be a compact complex manifold. As mentioned above a holomorphic vector bundle $E\to Y$ is the same thing as a locally free coherent analytic sheaf on $Y$.  Let $E$ have rank $r$ (meaning that each fibre is an $r$-dimensional complex vector space.  In the literature $E$ is said to be \textit{irreducible} if $E$ has no proper coherent subsheaf of rank $r'$ with $0< r' < r$. And $E$ is said to be \textit{strongly irreducible} if for any complex compact manifold $Y'$ of the same dimension as $Y$ and holomorphic surjective map $f:Y'\to Y$, the holomorphic vector bundle $f^{*}(E)$ on $Y'$ is irreducible.

An effective divisor in $P(E)$ is a positive linear combination of analytic subvarieties of dimension $2$. It is said to be horizontal if the projection of every irreducible component of its support (under $P(E) \to Y$) is all of $Y$.

\begin{Fact}[{\citep[Proposition p.242]{Toma}}] \label{Toma-equivalence}

$E$ is strongly irreducible iff $P(E)$ has no  horizontal (effective) divisors.

\end{Fact}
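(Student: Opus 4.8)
The plan is to prove the two implications separately, in both cases via the dictionary between rank-$1$ subsheaves of pullbacks of $E$ and horizontal subvarieties of $P(E)$, exploiting that $P(E)\to Y$ has one-dimensional fibres (this is genuinely a rank-$2$ statement, matching the standing assumption on $E$). Recall that $P(E)$ carries a tautological line subbundle $\mathcal{T}\hookrightarrow\pi^{*}E$ whose fibre over a point of $P(E_{y})$ is the corresponding line in $E_{y}$; that for a holomorphic surjective map $f\colon Y'\to Y$ of equidimensional compact complex manifolds one has $P(f^{*}E)=Y'\times_{Y}P(E)$; and that ``$f^{*}E$ is reducible'' just means $f^{*}E$ has a rank-$1$ subsheaf, equivalently (since $Y'$ is smooth, so a torsion-free rank-$1$ sheaf saturates to a line bundle) a saturated sub-line-bundle $M\hookrightarrow f^{*}E$.

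First I would treat the direction: if $P(E)$ has a horizontal effective divisor then $E$ is not strongly irreducible. Pick a horizontal irreducible component $D_{0}$ of the divisor. Since $\dim D_{0}=\dim P(E)-1=\dim Y$ and $\pi(D_{0})=Y$, the map $\pi|_{D_{0}}\colon D_{0}\to Y$ is generically finite and surjective. Let $Y'$ be a resolution of singularities of $D_{0}$ (available in the analytic category) and $f\colon Y'\to Y$ the induced surjective, generically finite, equidimensional map. Restricting $\mathcal{T}\hookrightarrow\pi^{*}E$ to $D_{0}$ and pulling it back along $Y'\to D_{0}\hookrightarrow P(E)$ yields a line bundle $M$ on $Y'$ together with a sheaf map $M\to f^{*}E$ that is injective over the dense open locus where the tautological line genuinely lies in $E$; as $M$ is torsion-free it is injective, so $M$ is a rank-$1$ subsheaf of the rank-$2$ bundle $f^{*}E$. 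Hence $f^{*}E$ is reducible, so $E$ is not strongly irreducible.

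Conversely, if $E$ is not strongly irreducible, fix $f\colon Y'\to Y$ surjective and equidimensional with $f^{*}E$ reducible, hence with a saturated sub-line-bundle $M\hookrightarrow f^{*}E$. Over the dense open $U\subseteq Y'$ where $M$ is a genuine subbundle, the assignment $y'\mapsto[\,M_{y'}\subseteq E_{f(y')}\,]$ is a holomorphic map $U\to P(E)$ whose composite with $\pi$ is $f|_{U}$, hence whose image has dimension $\dim U=\dim Y$. Let $\Gamma\subseteq P(E)$ be the closure of this image: it is an irreducible analytic subvariety of dimension $\dim Y=\dim P(E)-1$, i.e.\ a prime divisor, with $\pi(\Gamma)=Y$, so $\Gamma$ is a horizontal effective divisor. (Alternatively one could extend the rational section $Y'\dashrightarrow P(f^{*}E)$ over its codimension-$\ge 2$ indeterminacy locus by a Hartogs-type argument, using the $\mathbb{P}^{1}$-fibres, and then push forward along the finite map $P(f^{*}E)\to P(E)$; taking the closure directly avoids this.)

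The main obstacle I anticipate is bookkeeping rather than a conceptual difficulty: one must fix a convention for $P(E)$ (lines versus hyperplanes in $E$) so that the tautological-subbundle argument and the identification of horizontal divisors with multisections are consistent, and one must justify the dimension and irreducibility claims for $D_{0}$ and $\Gamma$ — in particular that the horizontal components of a divisor on $P(E)$ are exactly the ``multisection'' type, the vertical ones being $\pi^{-1}$ of divisors in $Y$. I also note that one-dimensionality of the fibres is essential: for $\mathrm{rk}\,E\ge 3$ a rank-$1$ subsheaf on a cover still produces a horizontal subvariety, but of codimension $\mathrm{rk}\,E-1>1$, so the clean equivalence is special to the rank-$2$ setting assumed here.
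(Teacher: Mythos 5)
The paper does not prove this statement at all: it is imported verbatim as a Fact, with a citation to Toma's Proposition on p.~242 of \cite{Toma}, so there is no in-paper argument to compare yours against. Your reconstruction is the standard dictionary between saturated rank-one subsheaves of pullbacks of $E$ and multisections of $P(E)\to Y$, which is (as far as I can tell) also how the cited source argues, and both directions of your argument are essentially correct for the rank-$2$ case at hand. Two points deserve more care than you give them. First, in the converse direction, ``let $\Gamma$ be the closure of the image'' does not by itself avoid the issue you relegate to a parenthesis: to know that $\Gamma$ is an analytic subvariety you do need to observe that the saturated inclusion $M\hookrightarrow f^{*}E$ defines a \emph{meromorphic} section of $P(f^{*}E)\to Y'$ (locally given by $[a_0:a_1]$ with the $a_i$ not all vanishing identically), whose graph closure is analytic, and then apply Remmert's proper mapping theorem to the projection of that compact graph to $P(E)$; the closure of the image of the non-compact set $U$ is not automatically analytic. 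Second, in the forward direction your hedge about the locus ``where the tautological line genuinely lies in $E$'' is unnecessary — the tautological sheaf is a subbundle of $\pi^{*}E$ everywhere on $P(E)$, so its restriction to $D_0$ and pullback to the resolution $Y'$ is fibrewise injective everywhere, and you should also note explicitly that resolution of singularities of the compact analytic surface $D_0$ produces a compact complex \emph{manifold} of the same dimension as $Y$, as the paper's definition of strong irreducibility requires. With those repairs the proof is complete, and your closing remark correctly identifies why the clean divisor formulation is special to $\mathrm{rk}\,E=2$.
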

Note that the right hand side above is equivalent to saying that $P(E)$ has no irreducible analytic subvariety of dimension $2$ which maps onto $Y$ (under $P(E)\to Y$).  Such a map has to of course be finite-to-one above a Zariski open subset of $Y$, by dimension considerations, so we could call it an algebraic rather than definable section

We now use that the generic type of $Y$ (over $M$) is orthogonal to $P$  (which is equivalent to non weakly orthogonal, as $M$ is a model): we consider $s$ realizing the generic type of $Y$ over $M$ and let $P(E)^{*}_{s}$ be the fibre of
$P(E)^{*} \to Y^{*}$ above $s$.  Now as $\mathrm{CCM}$ is totally transcendental, there is a prime model, say $N$, over $M,s$. The orthogonality hypothesis implies that that   $P(N) = P(M) = {\mathbb P}_{1}(\C)$. where $P$ is the sort of the projective line.  Now by Fact \ref{internalityofprojectivebundle} above there is a definable, over some parameter $t$ isomorphism between $P(E)^{*}_{s}$ and ${\mathbb P}_{1}(\C^{*})$. As $N$ is an elementary substructure of ${\mathbb M}$, we can find such $t$ in $N$, and working in $N$ gives us a $t$-definable isomorphism between $P(E)^{*}_{s}(N)$ and ${\mathbb P}_{1}(\C)$. 

\begin{Definition}
The  Galois group of $P(E)$ denoted  $G(P(E))$ is the group of elementary permutations of  $P(E)^{*}_{s}(N)$ induced by automorphisms of $N$ which fix pointwise $(M,s)$ where $s$ realizes the generic type of $Y$, $M$ is the standard model and $N$ is the prime model over $(M,s)$. 
\end{Definition}

\begin{Lemma} $G(P(E))$ is (definably) isomorphic to an algebraic subgroup $H$ of $PGL(2,\C)$.
\end{Lemma}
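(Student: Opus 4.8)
The plan is to transport the Galois action onto $\mathbb{P}_1(\C)$ via the isomorphism already in hand, and then to recognize the resulting group of permutations of $\mathbb{P}_1(\C)$ as algebraic. Recall from the discussion preceding the statement that, using the orthogonality of the generic type of $Y$ to $P$ (which gives $P(N)=P(M)=\mathbb{P}_1(\C)$) together with Fact~\ref{internalityofprojectivebundle} in the case $n=1$, there are a parameter $t\in N$ and a $t$-definable (over $(M,s)$) bijection $f\colon P(E)^{*}_{s}(N)\to\mathbb{P}_1(\C)$. By the general definable Galois theory for internal definable sets recalled in Section~4 (and Fact~1.1 of \cite{Palacin-Pillay}), $G(P(E))$ has the structure of an $(M,s)$-definable group acting $(M,s)$-definably and faithfully on $P(E)^{*}_{s}(N)$.

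Conjugating this action by $f$ yields a definable faithful action of $G(P(E))$ on $\mathbb{P}_1(\C)$, equivalently a definable injective homomorphism $\rho$ of $G(P(E))$ into the group of definable bijections of $\mathbb{P}_1(\C)$. Concretely, for $g\in G(P(E))$ induced by an automorphism $\sigma$ of $N$ fixing $(M,s)$ pointwise, $\sigma$ fixes $P(N)=\mathbb{P}_1(\C)$ pointwise and carries $f$ to the $\sigma(t)$-definable bijection $\sigma(f)$, so that $\rho(g)=f\circ g\circ f^{-1}=f\circ\sigma(f)^{-1}$, a bijection of $\mathbb{P}_1(\C)$ with graph definable over $(M,s,t,\sigma(t))$ inside $P(N)^{2}=\mathbb{P}_1(\C)^{2}$.

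To finish, I would invoke the structure of the sort $P$ in $\mathrm{CCM}$: by quantifier elimination together with Chow's theorem, every parameter-definable subset of $P^{n}$ is algebraic constructible, so that the induced structure on $P$ is that of the algebraically closed field $\C$. Consequently $G(P(E))$ is definably isomorphic to an algebraic group over $\C$, its definable faithful action on $\mathbb{P}_1$ is, after a definable change of coordinates, algebraic, and a faithful algebraic action on $\mathbb{P}_1$ factors through a closed embedding into $\mathrm{Aut}(\mathbb{P}_1)=PGL(2,\C)$; hence $G(P(E))$ is definably isomorphic to an algebraic subgroup $H$ of $PGL(2,\C)$. I expect the main obstacle to be the bookkeeping in the transport step and the clean invocation of the fact that a definable faithful group action on $\mathbb{P}_1$ over $\C$ is, up to a definable coordinate change, algebraic (so that it genuinely factors through $PGL(2,\C)$); the purely algebro-geometric content is routine.
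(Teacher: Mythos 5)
Your transport step is exactly the paper's: you correctly identify the parameter $t$, the $(M,s)$-definability of the group and its action, and compute $\rho(g)=f_t\circ f_{\sigma(t)}^{-1}$, observing that its graph is definable in $\mathbb{P}_1(\C)^2$. The gap is in the concluding step, and you have flagged it yourself. Knowing only that $\rho(g)$ is a \emph{definable bijection} of $\mathbb{P}_1(\C)$ does not place it in $PGL(2,\C)$: already in the pure field $\C$, the permutation transposing $0$ and $1$ and fixing everything else is definable, so the group of definable permutations of $\mathbb{P}_1(\C)$ is far larger than $PGL(2,\C)$. The principle you invoke instead --- that a definable faithful group action on $\mathbb{P}_1$ is, after a definable change of coordinates, algebraic and hence factors through $PGL(2,\C)$ --- is precisely the ``soft'' route the paper mentions and deliberately avoids; it is a nontrivial theorem about definable groups of permutations of strongly minimal sets which you neither state precisely nor prove, and applied element by element (to conclude that each individual $\rho(g)$ lies in $PGL(2,\C)$) it is simply not available.

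The paper closes this gap by using more than definability of $f_t$: by Fact \ref{internalityofprojectivebundle} and the discussion following it, $f_t$ and $f_{\sigma(t)}$ are \emph{isomorphisms} (biholomorphic maps, in particular with Zariski-closed graphs) between $P(E)^{*}_{s}$ and $\mathbb{P}_1$. Hence $\rho(g)=f_t\circ f_{\sigma(t)}^{-1}$ is a biholomorphic self-map of $\mathbb{P}_1(\C)$, i.e.\ an automorphism of $\mathbb{P}_1$ in the sense of algebraic geometry, i.e.\ an element of $PGL(2,\C)$, with no appeal to any classification of definable actions. Once each $\rho(g)$ is known to lie in $PGL(2,\C)$, the image of $G(P(E))$ is a definable subgroup of $PGL(2,\C)$, and since the induced structure on the sort $P$ is just that of the algebraically closed field $\C$, this definable subgroup is algebraic. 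If at the point where you record only definability of the graph you instead invoke the isomorphism property of $f_t$ and $f_{\sigma(t)}$, your argument becomes the paper's.
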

\begin{proof} First, we have more or less said above that $G = G(P(E))$ and its action on $P(E)^{*}_{s}(N)$ are definable over $(M,s)$ in $N$. We will write the definable, over $t$,  isomorphism, between $P(E)^{*}_{s}(N)$ and ${\mathbb P}_{1}(\C)$ as $f_{t}(-)$.  

There are some soft proofs relying on general theorems about definable groups of permutations of strongly minimal sets. But we will be more direct.  First the definable isomorphism $f_{t}(-)$ between $P(E)^{*}_{s}(N)$ and ${\mathbb P}_{1}(\C)$ yields a definable isomorphism $F$ say between $G$ and some definable group of permutations of ${\mathbb P}_{1}(\C)$. Let us show that the image of any $\sigma\in G$ is an automorphism of ${\mathbb P}_{1}(\C)$ in the sense of algebraic geometry, hence an element of $PGL(2,\C)$. 

Given $\sigma\in G$, and $c\in {\mathbb P}_{1}(\C)$, we find $F(\sigma)(c)$ by taking $f_{t}^{-1}(c)$, applying $\sigma$ then applying $f_{t}$ again, obtaining $f_{t}(f_{\sigma(t)}^{-1}(c))$. Notice that $f_{t}$ is an isomophism, as $f_{\sigma(t)}$ and its inverse, so the definition of isomorphism above shows that the composition is an automorphism of ${\mathbb P}_{1}(\C)$, as required.
Hence we get a definable isomorphism between $G$ and a definable (in $\mathrm{CCM}$) subgroup of $Aut({\mathbb P}_{1}(\C)) = PGL(2,\C)$.  As ${\mathbb P}_{1}(\C)$ with induced structure from $M$ is just the algebraically closed field $\C$ (with point at infinity), it follows that this definable subgroup is an algebraic subgroup. 
\end{proof}

\begin{Proposition} Suppose $E\to Y$ is strongly irreducible. Then  $G(P(E))$ is (definably) isomorphic to $PGL(2,\C)$.
\end{Proposition}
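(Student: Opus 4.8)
The plan is to show that the algebraic subgroup $H\leq PGL(2,\C)$ supplied by the preceding lemma must be all of $PGL(2,\C)$, by ruling out every proper algebraic subgroup using Toma's criterion (Fact \ref{Toma-equivalence}). First I would record the needed group-theoretic fact: \emph{every proper algebraic subgroup $H$ of $PGL(2,\C)$ has a finite orbit on $\Pp_1(\C)$}. If $H$ is finite this is trivial; if $H$ is infinite then its identity component $H^{\circ}$ is a nontrivial connected solvable group (contained in a Borel of $PGL(2,\C)$), so by the Borel fixed point theorem $H^{\circ}$ fixes a point, whence $\mathrm{Fix}(H^{\circ})$ is a nonempty finite subset of $\Pp_1(\C)$, and it is $H$-invariant since $H$ normalizes $H^{\circ}$.

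Next I would argue by contradiction. Suppose $H\neq PGL(2,\C)$, and let $O\subseteq\Pp_1(\C)$ be a finite $H$-invariant set. Transporting $O$ back through the $t$-definable isomorphism $f_t$ between $P(E)^{*}_{s}(N)$ and $\Pp_1(\C)$, and using, as in the proof of the preceding lemma, that each $\sigma\in G(P(E))$ acts on $\Pp_1(\C)$ exactly as the element $F(\sigma)\in H$ — concretely $\sigma(f_t^{-1}(c))=f_t^{-1}(F(\sigma)(c))$ for $c\in\Pp_1(\C)$ — one sees that $f_t^{-1}(O)$ is a finite subset of $P(E)^{*}_{s}(N)$ invariant under $G(P(E))=\mathrm{Aut}(N/(M,s))$. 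Now invoke the Galois correspondence for the $P$-internal set $P(E)^{*}_{s}(N)$ over $(M,s)$: because the generic type of $Y$ is orthogonal to $P$ we have $P(N)=\Pp_1(\C)$, so every automorphism of $N$ over $(M,s)$ automatically fixes $P$ pointwise, and the fixed/finitely-moved points of $G(P(E))$ are precisely those lying in $\mathrm{dcl}(M,s)$/$\mathrm{acl}(M,s)$. Hence $f_t^{-1}(O)$ is a nonempty finite subset of $P(E)^{*}_{s}(N)\cap\mathrm{acl}(M,s)$.

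Finally I would translate this back into complex geometry. Since $M$ is the standard model and $s$ realizes the generic type of $Y$ over $M$, a nonempty finite subset of the fibre $P(E)^{*}_{s}$ lying in $\mathrm{acl}(M,s)$ is cut out by a formula over $M$ and is therefore the fibre over $s$ of (the support of) an analytic subvariety $\widetilde{Z}\subseteq P(E)$ defined over $M$ whose projection to $Y$ is dominant with finite generic fibres. Choosing an irreducible component $Z$ of $\widetilde{Z}$ that maps onto $Y$, we get $\dim Z\geq\dim Y=2$ and $\dim Z<\dim P(E)=3$ (the generic fibre of $Z\to Y$ is finite, not all of $\Pp_1$), so $\dim Z=2$: an irreducible analytic subvariety of dimension $2$ in the threefold $P(E)$ mapping onto $Y$, i.e. a horizontal (prime) divisor. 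This contradicts the hypothesis that $E$ is strongly irreducible, by Fact \ref{Toma-equivalence} (in the equivalent form noted right after its statement). Therefore $H=PGL(2,\C)$, which together with the preceding lemma proves the proposition.

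The main obstacle I anticipate is the last step: making precise that a finite subset of the fibre $P(E)^{*}_{s}$ which is algebraic over $(M,s)$, with $s$ a \emph{nonstandard} generic point of $Y$, genuinely corresponds to a \emph{standard} analytic subvariety of $P(E)$ that is finite-to-one over $Y$. This is exactly where one must use the dictionary between the model theory of $\mathrm{CCM}$ and honest complex-analytic geometry (as developed in \cite{Moosa-nonstandard}), together with careful bookkeeping of the auxiliary parameter $t$ in the Galois correspondence; the group theory and the contradiction with strong irreducibility are then formal.
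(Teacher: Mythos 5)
Your proposal is correct and follows essentially the same route as the paper: rule out proper subgroups of $PGL(2,\C)$ via the Borel/finite-orbit argument, transfer the finite orbit to a finite $\acl(M,s)$-subset of the fibre $P(E)^{*}_{s}$ (the paper does this via $\omega$-homogeneity of the prime model $N$ over $(M,s)$, which is your ``Galois correspondence'' step), and convert that into a horizontal divisor contradicting strong irreducibility via Fact \ref{Toma-equivalence}. The only difference is that you spell out the dimension count and the model-theory-to-geometry dictionary in the last step, which the paper compresses into the single sentence ``this yields an algebraic section of $P(E)\to Y$.''
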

\begin{proof}
Let $H$ be the algebraic subgroup of $PGL(2,\C)$ isomorphic to $G$ via $F$ above.  We claim that $H = PGL(2,\C)$. If not, the connected component $H^{0}$ of $H$ is a proper connected algebraic subgroup of $PGL(2,\C)$ which is therefore contained in a conjugate of the Borel subgroup of $PGL(2,\C)$, thus is the stabilizer of some point $a$  in ${\mathbb P}_{1}(\C)$.  But then $H$ has a finite orbit in its action on  ${\mathbb P}_{1}(\C)$. 
It follows that $G$ has a finite orbit under its action on $P(E)^{*}_{s}(N)$ via automorphisms of $N$ over $(M,s)$.  As $N$ is $\omega$-homogeneous over $(M,s)$, it follows that there is $b\in P(E)^{*}_{s}(N)$, which is in $acl(M,s)$.  This yields an algebraic section of $P(E)\to Y$, which contradicts the strong irreducibility of $E$ and Fact \ref{Toma-equivalence}. 
\end{proof}

\begin{Corollary} $\mathrm{CCM}$ does not have the strong $\mathrm{CBP}$. 
\end{Corollary}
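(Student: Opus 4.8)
The plan is to apply the Galois-theoretic criterion of Theorem~\ref{strongCBP-rigidity}. Since $\mathrm{CCM}$ is totally transcendental of finite rank and $P=\mathbb{P}_1$ is a $\emptyset$-definable sort which, up to nonorthogonality, is the unique non locally modular strongly minimal set, $\mathrm{CCM}$ satisfies $(\ast)$ and $\mathcal Q = P_{\mathrm{int}}$; so it is enough to exhibit a stationary $\mathcal Q$-internal type whose Galois group relative to $\mathcal Q$ is not rigid. I would take $Y$ a locally modular strongly minimal compact complex surface carrying a strongly irreducible rank $2$ holomorphic vector bundle $E$ (these exist by the work of Toma \cite{Toma}, and are the objects of the preceding Proposition), let $s$ realize the generic type of $Y$ over the standard model $M$, let $a$ be a generic point of the fibre $P(E)^{*}_{s}$, and set $q = \tp(a,s/\emptyset)$ (recall $M \subseteq \dcl(\emptyset)$ in $\mathrm{CCM}$), with $\pi$ the projection onto the $s$-coordinate. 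By Fact~\ref{internalityofprojectivebundle}, $(q,\pi)$ is relatively $P$-internal, and by the preceding Proposition the Galois group of $q_{\pi(a)} = \tp(a/s)$ relative to $P$ is definably isomorphic to $PGL(2,\C)$.

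It remains to see that the Galois group relative to $\mathcal Q = P_{\mathrm{int}}$ is still $PGL(2,\C)$. As $\mathcal Q$ is $\emptyset$-invariant, $\mathrm{Gal}(\tp(a/s)/\mathcal Q)$ is a normal subgroup of $\mathrm{Gal}(\tp(a/s)/P) \cong PGL(2,\C)$, hence — by simplicity and triviality of the center of $PGL(2,\C)$ — either all of $PGL(2,\C)$ or trivial. If it is trivial, then $(q,\pi)$ is uniformly relatively $P$-internal; since $\pi(q)$ is the generic type of $Y$, hence orthogonal to $P$, and $\tp(a/s)$ has $U$-rank one (the fibre is $\mathbb{P}_1$), Proposition~\ref{equivalence generale} forces $(q,\pi)$ to be almost trivial, so $q$ is interalgebraic over $\emptyset$ with $\pi(q)\otimes r$ for some $r \in P_{\mathrm{int}}$ having a realization $c$ independent from $s$. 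A rank computation gives $U(c/\emptyset)=1$, so by uniqueness of $P$ up to nonorthogonality $c$ is interalgebraic over $\acl(\emptyset)$ with a generic point of $\mathbb{P}_1$, and the interalgebraicity of $q$ with $\pi(q)\otimes r$ becomes a generically finite-to-finite correspondence between $P(E)$ and $Y\times\mathbb{P}_1$ over $Y$. Specializing the $\mathbb{P}_1$-coordinate to a fixed point produces an irreducible analytic subvariety of $P(E)$ of dimension $\dim Y$ that surjects onto $Y$, i.e.\ a horizontal effective divisor, contradicting Fact~\ref{Toma-equivalence} and the strong irreducibility of $E$. Hence $\mathrm{Gal}(\tp(a/s)/\mathcal Q)\cong PGL(2,\C)$, which is not rigid: its Borel subgroups form an infinite conjugacy class of connected subgroups that cannot all be type-defined over $\acl$ of the base.

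By Theorem~\ref{strongCBP-rigidity}, $\mathrm{CCM}$ therefore does not have the strong CBP. The main obstacle is the second paragraph — ruling out that the passage from $P$ to $\mathcal Q = P_{\mathrm{int}}$ collapses the Galois group — and it is precisely here that Toma's geometric input (the absence of horizontal divisors), which already powered the preceding Proposition, is used again. The remaining points are routine: verifying $(\ast)$ for $\mathrm{CCM}$, translating relative $P$-internality into the fibration $P(E)\to Y$, and the standard non-rigidity of $PGL(2,\C)$.
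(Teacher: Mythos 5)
Your proposal is correct and reaches the conclusion by the same overall strategy as the paper (Toma's strongly irreducible rank-$2$ bundle over a locally modular surface, the preceding Proposition giving $PGL(2,\C)$, non-rigidity of $PGL(2,\C)$, and Theorem \ref{strongCBP-rigidity}), but it handles the crucial passage from the Galois group relative to $P$ to the Galois group relative to $\mathcal Q=P_{\mathrm{int}}$ by a genuinely different route. The paper disposes of this issue softly: since the base contains the standard model $M$, the Lemma stated just before Problem \ref{inverseGaloistheory-CCM} shows that $P_{\mathrm{int}}$-definability collapses to $P$-definability over a model, so the two Galois groups simply coincide and no further geometry is needed. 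You instead run the normal-subgroup-of-a-simple-group dichotomy and then rule out triviality by invoking Proposition \ref{equivalence generale} (orthogonality of the base to $P$ plus $U$-rank one fibres forces almost triviality) and a second application of the no-horizontal-divisor input from Fact \ref{Toma-equivalence}. Your argument is valid and is essentially the same pattern the paper uses in the $\mathrm{DCF}_0$ case; what the paper's route buys is that Toma's geometric input is used only once (inside the preceding Proposition), whereas yours re-derives an algebraic multisection a second time. Two small points you should still address, as the paper does: you need $\tp(a/s)$ to be stationary (the paper arranges this by adjoining a suitable $e\in\acl(M,s)$, observing that this does not change the already connected Galois group), and you work with a single generic fibre point $a$ rather than the fundamental tuple $\bar b$ the paper extracts via a canonical base; this is fine, but you should say why the binding group of the complete type is still all of $PGL(2,\C^{*})$ (the action of $PGL(2,\C^{*})$ on the orbit of $a$ in the fibre is transitive, hence faithful).
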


\begin{proof}
The main result of \cite{Toma} implies that one can construct a K3-surface (or complex torus)  $Y$ and a vector bundle $E$ of rank $2$ over $Y$ such that:
\begin{itemize}
\item $Y$ is strongly minimal and orthogonal to $P$,
\item $E \to Y$ is strongly irreducible.
\end{itemize}
By the proposition above, it follows that $G(P(E))$ is (definably) isomorphic to $PGL(2,\C)$.

It is now easy to find a complete (and even stationary) type which is internal to $P$ with Galois group $PGL(2,\C^{*})$:
Recall $t$ was a parameter in $N$ over which there is a definable isomorphism between  $P(E)^{*}_{s}(N)$ and ${\mathbb P}_{1}(\C)$. On general grounds  (by considering say the canonical base of $t$ over  $P(E)^{*}_{s} \cup 
{\mathbb P}_{1}(\C^{*})$) we can find a tuple ${\bar b}$ from $P(E)^{*}_{s}(N)$ such that such an isomorphism can be found definable over ${\bar b}$.  It follows that $\tp({\bar b}/M,s)$ is internal to $P$ and its Galois group is definably isomorphic to  $PGL(2,\C^{*})$. To make the type stationary, we can consider the (isolated) type of ${\bar b}$ over $(M,s)$ together with a suitable $e\in acl(M,s)$, which will not change the Galois group as it is already connected.   Hence by  Theorem \ref{strongCBP-rigidity}, CCM does not have the strong CBP.

\end{proof}

\subsection{Pairs of algebraically closed fields} In contrast, we conclude this section with a theory which does have the strong CBP for not completely trivial reasons.  We will take $T$ to be the theory of (nontrivial)  pairs of algebraically closed fields of characteristic $0$, in the language of unitary rings together with a unary predicate $P$ for the bottom field. $T$ is a reduct of $\mathrm{DCF}_0$ where the bottom field comes from the constants. It is well-known that $T$ is complete , $\omega$-stable of infinite Morley rank, and the bottom field is the only strongly minimal set, up to nonorthogonality.  Let us call the bottom field $P$. There are many definable sets in $T$ which are internal to $P$ but may need parameters to see the internality.  See \cite{Pillay-pairsACF}
for more details about this theory.  An important fact which follows from the description of independence (nonforking) in models of $T$ is that analyzability in $P$ implies internality to $P$. As we know \cite{Chatzidakis} that (in the context of superstable theories of finite rank) if $b = Cb(\stp(a/b)$ then $\stp(b/a)$ is analyzable in the family on non locally modular types of $U$-rank $1$, it follows that:
\begin{Proposition} The (finite rank part of) $T$, the theory of pairs of algebraically closed fields, has the strong canonical base property.

\end{Proposition}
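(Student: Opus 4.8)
The plan is to feed Chatzidakis's analysis-theoretic description of canonical bases into the explicit description of non-forking in pairs of algebraically closed fields. First I would fix the framework: working in the finite-rank part of $T$, the bottom field $P$ is, up to nonorthogonality, the unique non-locally-modular $U$-rank one type, and it is $\emptyset$-definable; so $T$ satisfies $(\ast)$ and, as in Section 4, the family $\mathcal{Q}$ is precisely the family of types $\tp(d/\emptyset)$ with $\stp(d/\emptyset)$ internal to $P$. In particular any finite tuple $c$ of elements of $P$ realizes a type in $\mathcal{Q}$. So the task is: given $\tp(a/\emptyset)$ finite and $b=\cb(\stp(a/b))$, produce such a $c$ with $b\in\acl(a,c)$.

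The starting point is Chatzidakis's theorem \cite{Chatzidakis}: since $b=\cb(\stp(a/b))$, the type $\stp(b/a)$ is analyzable in the family of non-locally-modular $U$-rank one types. Using that $P$ is the unique such type up to nonorthogonality, together with the equivalence of (almost) internality to that family and to $\mathcal{Q}$ (Lemma 3.3 of \cite{Palacin-Pillay}, applied one step at a time along an analysis), this upgrades to: $\stp(b/a)$ is analyzable in $P$.

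Now comes the input specific to $T$. By the description of non-forking in pairs of algebraically closed fields (see \cite{Pillay-pairsACF}), analyzability in $P$ collapses to internality to $P$; the sharp form I would want is that a type $\stp(b/A)$ analyzable in $P$ already satisfies $b\in\acl\bigl(A,\,P\cap\dcl(Ab)\bigr)$ — that is, the internality data can be taken to be the trace of $\dcl(Ab)$ on the bottom field, without introducing any genuinely new parameters. Applying this with $A=a$ and letting $c$ be a finite subtuple of $P\cap\dcl(ab)$ witnessing the algebraicity, we obtain $b\in\acl(a,c)$ with $\tp(c/\emptyset)\in\mathcal{Q}$; by the definition of the strong CBP this completes the proof, and in fact shows that the associated fibration is $\mathcal{Q}$-algebraic over its base, not merely uniformly relatively internal.

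The main obstacle is this last structural step: extracting "no new parameters" from the forking description in $T$. If one knew only that $P$-analyzability implies $P$-internality allowing extra parameters, one would get $\stp(b/a)$ internal to $\mathcal{Q}$ but not $\mathcal{Q}$-algebraic — strictly weaker than the strong CBP. One has to see instead that each step of a $P$-analysis over $A$ only adds elements already lying in the definable closure, over $A$ together with the preceding steps, of finitely many elements of $P$, so that the accumulated internality data stays inside $A$ together with the trace on the bottom field — and this is exactly where the trace-over-$P$ clause in the formula for independence in pairs does the work. A secondary, more routine point is carrying the reduction from the $\emptyset$-invariant family of non-locally-modular rank one types to the $\emptyset$-type family $\mathcal{Q}$ through the whole analysis rather than for a single internal type.
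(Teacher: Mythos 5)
Your proof is correct and follows essentially the same route as the paper: Chatzidakis's theorem gives analyzability of $\stp(b/a)$ in the family of non-locally-modular rank-one types, and the description of forking in pairs of algebraically closed fields collapses $P$-analyzability to $P$-algebraicity (not merely internality with auxiliary parameters), which yields $\mathcal{Q}$-algebraicity and hence the strong CBP. You have correctly isolated the one delicate point --- that the internality data must be extractable from the bottom field itself, with no genuinely new parameters --- which the paper states only tersely as ``analyzability in $P$ implies internality to $P$.''
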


\section{Some effective criteria for uniform internality in $\mathrm{DCF}_0$}
In this section, we give  effective criteria for uniform relative (almost) internality to the constants for two specific families of differential equations of the theory  $\mathrm{DCF}_0$. 

The first family has the form:
$$\begin{cases}    y' = yg(x)\\ x' = f(x) \end{cases} \text {where} f(x), g(x) \in \C(x) $$

and will be studied in Section 5.1.

The second family has the  form:
$$\begin{cases}  y' = g(x)\\x' = f(x)\end{cases} \text {where again} f(x), g(x)\in \C(x) $$
 and is mentioned in Section 5.2.

\vspace{5mm}
\noindent
Of course we are restricting $(x,y)$ to where both $f$ and $g$ are defined at $x$.  So the base and fibres are both order $1$ so strongly minimal.   And the total space is Morley rank $2$, degree $1$. As a warm up, consider the special case of the first family, where $g(x) = x$:

$$(E): \begin{cases} y' = yx  \\
x' = f(x) \end{cases} \text{ where } f(x)  \in \mathbb{C}(x).$$

Let $q$ be the unique generic type of the total space  (over $\C$), and $\pi$ the projection on $x$. So in the language of Section 3, $(q,\pi)$ is relatively internal to the constants. Moosa and Jin \cite{Moosa-Jin} work under the assumption that $\pi(q)$ is internal to the constants, and prove that $q$ is almost internal to the constants if and only if $(q,\pi)$ is almost trivial, if and only if  there exists a non-zero $k \in \mathbb{Z}$, $\alpha \in \mathbb{C}$ and $h \in \mathbb{C}(x)$ such that
$$ kx = \alpha + \mathrm{dlog}_f(h) = \alpha + f \frac {\frac d {dx} (h)} {h}$$
where $\mathrm{dlog}_f(h) = \frac{\delta_f(h)}  {h}$ is the logarithmic derivative of the derivation $\delta_f$ satisfying $\delta_f(x) = f(x)$. 

We prove a similar statement when the base $x' = f(x)$ is orthogonal to the constants replacing (almost) internality to the constants by  uniform relative (almost) internality.

\subsection{Case of the logarithmic derivative} Fix a differentially closed field $\mathcal U$ extending $\mathbb{C}$.  $\mathbb{C}\lbrace x \rbrace$ denotes the complex algebra (of infinite dimension) of differential polynomials with complex coefficients and $\mathbb{C}\langle x\rangle$ denotes its function field.

\begin{Definition}For every $g(x) \in \mathbb{C}\langle x\rangle$ written as  $g(x) = \frac{g_1(x)} {g_2(x)}$ where $g_1(x),g_2(x) \in \mathbb{C} \lbrace x \rbrace$ are two complex differential polynomials, we define:
$$ \mathcal D^{log}_g =  \lbrace (x,y) \in \mathcal U^2 \text{  } | \text{ } g_2(x) \neq 0\text{, } \delta(y) = g(x)y \text{ and } y \neq 0 \rbrace.$$ 
\end{Definition}

We also denote by $\pi$ the projection on the $x$-coordinate.  Note that $\mathcal D_g^{log}$ is a $\mathbb{C}$-definable set, its projection by $\pi$ has Morley rank $\omega$ and the fibres are strongly minimal and  internal to the constants.   In particular, for every type $q \in S(\mathbb{C})$ living on $\mathcal D^{log}_g$, $(q,\pi)$ is relatively internal to the constants.

\begin{Theorem}\label{classification-logarithmicderivative} Fix $g(x) \in \mathbb{C}\langle x\rangle$  and consider $q \in S(\mathbb{C})$ a type living on $\mathcal D^{log}_g$. If $\pi(q)$ is of finite rank and orthogonal to the constants, then the following are equivalent:
\begin{itemize}
\item[(i)] $(q,\pi)$ is uniformly relatively internal to the constants.
\item[(ii)] $(q,\pi)$ is trivial: there exists a type $p \in S(\mathbb{C})$ internal to the constants such that $q = \pi(q) \otimes p$.
\item[(iii)] For some (any) realization $b \models \pi(q)$, there exists $\alpha \in \mathbb{C}$ and a non zero $h$ in the differential field generated by $b$ and $\mathbb{C}$ such that
$$g(b) = \alpha + dlog(h)$$
where $\mathrm{dlog}: (\mathcal U,+) \rightarrow (\mathcal U,\times)$ is the usual logarithmic derivative taking $h$ to $\delta(h)/h$. 
\end{itemize}
\end{Theorem}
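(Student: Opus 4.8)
The plan is to prove the cycle $(iii) \Rightarrow (ii) \Rightarrow (i) \Rightarrow (iii)$. The implication $(ii) \Rightarrow (i)$ is immediate: a trivial fibration is manifestly uniformly relatively internal, by Proposition \ref{equivalence generale} (the easy direction) or directly from the definition, since in $q = \pi(q) \otimes p$ the $\mathcal{P}$-internal factor $p$ is witnessed with no dependence on the base point. For $(iii) \Rightarrow (ii)$, suppose $g(b) = \alpha + \mathrm{dlog}(h)$ with $\alpha \in \mathbb{C}$ and $h$ in the differential field $\mathbb{C}\langle b \rangle$. Given $(b,y)$ realizing $q$, so $\delta(y) = g(b)y$, I would set $y_0 = y/h$ (legitimate since $h \neq 0$, and $h \in \dcl(b)$). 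Then a direct computation gives $\delta(y_0)/y_0 = \delta(y)/y - \delta(h)/h = g(b) - \mathrm{dlog}(h) = \alpha$, so $y_0$ satisfies the \emph{autonomous} linear equation $\delta(y_0) = \alpha y_0$; hence $\tp(y_0/\mathbb{C})$ is internal to the constants (it lies on a $\mathbb{C}$-definable strongly minimal set internal to $\mathcal{C}$, namely $\{z : \delta(z) = \alpha z\}$, which is a torsor under the constants). Now $y_0 \in \dcl(b, y)$ and conversely $y = h y_0 \in \dcl(b, y_0)$, so $(b,y)$ and $(b, y_0)$ are interdefinable over $\mathbb{C}$. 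It remains to check that $y_0 \ind_{\mathbb{C}} b$: since $\tp(y_0/\mathbb{C})$ is internal to the constants while $\pi(q) = \tp(b/\mathbb{C})$ is orthogonal to the constants by hypothesis, these two types are orthogonal, giving the required independence. Thus $q$ is in definable bijection with $\pi(q) \otimes \tp(y_0/\mathbb{C})$, which is exactly triviality with $p = \tp(y_0/\mathbb{C})$.

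The real content is $(i) \Rightarrow (iii)$. Since $\pi(q)$ is of finite rank and orthogonal to the constants (which is $\mathcal{P}$ here), and $\tp((b,y)/b)$ has $U$-rank one (the fibre $\delta(y) = g(b)y$, $y \neq 0$ is strongly minimal), Proposition \ref{equivalence generale} applies: uniform relative almost internality is equivalent to almost triviality, hence (a fortiori, from $(i)$ which is the non-almost version, or after a short argument upgrading almost triviality) $q$ is interalgebraic with $\pi(q) \otimes r$ for some $\mathcal{C}$-internal $r = \tp(c/\mathbb{C})$. I would then translate this algebraic statement into the differential-algebraic identity in $(iii)$. The standard move: $c$ internal to the constants and of $U$-rank one, sitting in $\acl(b, y)$ with $c \ind_{\mathbb{C}} b$, means $c$ lies in a strongly minimal set internal to $\mathcal{C}$; by the classification of rank-one sets internal to the constants in $\mathrm{DCF}_0$ (or directly, by analyzing the logarithmic-derivative equation), $c$ can be taken to satisfy either $\delta(c) \in \mathbb{C}$ or $\mathrm{dlog}(c) \in \mathbb{C}$. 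Using interalgebraicity of $y$ with $(b,c)$ over $\mathbb{C}$ together with $\delta(y) = g(b) y$, one extracts a relation: $y^k = (\text{function of } b) \cdot (\text{function of } c)$ for some nonzero integer $k$, and applying $\mathrm{dlog}$ and comparing the $\mathbb{C}\langle b\rangle$-part with the constant part yields $k g(b) = \alpha + \mathrm{dlog}(h)$ for some $\alpha \in \mathbb{C}$ and $h \in \mathbb{C}\langle b \rangle$; absorbing the $k$ (replacing $h$ by an appropriate power and $\alpha$ by $\alpha/k$, noting $\mathbb{C}$ is a field) gives the stated form.

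The main obstacle I anticipate is the bookkeeping in $(i) \Rightarrow (iii)$: passing cleanly from the model-theoretic interalgebraicity data to an explicit differential-rational identity requires controlling \emph{which} strongly minimal $\mathcal{C}$-internal sets can occur and ruling out the exponential (multiplicative) alternative unless it is already of the logarithmic-derivative shape, and then managing the multivaluedness (the integer $k$ and the algebraic closure) to land on an honest equality in $\mathbb{C}\langle b \rangle$ rather than in a finite extension. A secondary subtlety is that $(i)$ as stated is uniform relative internality (not \emph{almost}), while Proposition \ref{equivalence generale} is phrased for the almost version; I would either invoke it in the almost form and then note that the resulting trivialization can be taken definable (since the fibre equation and the witness $y_0$ above are genuinely definable, not merely algebraic), or argue the non-almost refinement directly from the explicit form of the fibration. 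I expect the latter clean-up to be routine once the identity in $(iii)$ is in hand, since $(iii)$ visibly produces a \emph{definable} trivialization.
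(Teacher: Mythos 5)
Your handling of (iii) $\Rightarrow$ (ii) $\Rightarrow$ (i) is correct and is essentially the paper's argument: the change of variable $y\mapsto y/h$, the observation that $\mathrm{dlog}(y/h)=\alpha\in\mathbb{C}$, and the independence $y_0\ind_{\mathbb{C}}b$ coming from orthogonality of $\pi(q)$ to the constants. The gap is in (i) $\Rightarrow$ (iii). Your route goes through Proposition \ref{equivalence generale}, which only delivers \emph{almost} triviality, i.e.\ interalgebraicity of $q$ with $\pi(q)\otimes r$; by your own account this yields at best an identity $kg(b)=\alpha+\mathrm{dlog}(h)$ for some nonzero integer $k$ and $h\in\mathbb{C}\langle b\rangle$. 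The final step, ``absorbing the $k$ by replacing $h$ by an appropriate power,'' is false: $\frac{1}{k}\mathrm{dlog}(h)=\mathrm{dlog}(h^{1/k})$ requires a $k$-th root of $h$, which in general does not lie in $\mathbb{C}\langle b\rangle$. The difference between the identity with general $k$ and the identity with $k=1$ is exactly the difference between uniform relative \emph{almost} internality and uniform relative internality --- this is the content of Lemma \ref{almost} and of the integer-versus-rational residue dichotomy in Corollary \ref{explicituniforminternality} --- so an argument that erases the $k$ would prove the two notions coincide, which they do not. A secondary unproved point is the claim that interalgebraicity of $y$ with $(b,c)$ forces a multiplicative relation $y^k=(\text{function of }b)\cdot(\text{function of }c)$; that extraction is itself nontrivial.

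The paper avoids both issues by never passing to interalgebraicity. It starts from the honest definability datum $y\in\dcl(x,e,c_1,\dots,c_n)$ with $e,c_1,\dots,c_n\ind_{\mathbb{C}}\overline{x}$ (the strengthened independence uses orthogonality of $\pi(q)$ to the constants), identifies $\dcl(\mathbb{C},e,\bar c,\overline{x})$ with $\mathrm{Frac}(\mathbb{C}(\overline{x})\otimes_{\mathbb{C}}K)$ for $K=\mathbb{C}\langle e,\bar c\rangle$, and splits $\delta=D_1+D_2$ into two commuting derivations. Commutation gives $\delta(D_1y)=g(x)D_1y$, so $D_1y=\alpha y$ for $\alpha$ a constant of $K(\overline{x})$ (hence in $K$ by orthogonality), whence $D_2y=(g(x)-\alpha)y$; a definability-of-types argument then descends a solution to $\mathbb{C}(\overline{x})$ with $\alpha\in\mathbb{C}$, which is (iii) on the nose with $k=1$. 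To repair your proof you would need to carry definability, not algebraicity, through every step --- which in effect means replacing the appeal to Proposition \ref{equivalence generale} by a descent argument of this kind.
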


\begin{proof}
To show that (iii) implies (ii), we use that if $y$ is a solution of $\delta(y) = g(x)y$ then
$$dlog(y/h) = dlog(y) - dlog(h) = g(b) - dlog(h) = \alpha \in \mathbb{C}.$$

This shows that the $\mathbb{C}$-definable map $(x,y) \mapsto (x,y/h(x))$ fixes $\pi(q)$ and sends $\pi(q)$ to $q \otimes r$ where $r$ is the type of a non-zero solution of $y' = \alpha y$. (ii) implies (i) is immediate as in Proposition \ref{equivalence generale}.

It remains to show that (i) implies (iii).  We consider $x \models \pi(q)$ and let $\overline{x}$  be a finite tuple which generates  $\mathbb{C}\langle x \rangle$ as a field over $\mathbb{C}$. We denote $p = \tp(\overline{x}/\mathbb{C})$. Let $y$ be a realization of $q_x$. As $(q,\pi)$ is uniformly relatively internal, there exists $e \ind_\mathbb{C} x$ and some constants $c_1,\ldots,c_n \in \mathcal C$ such that $y \in \mathrm{dcl}(e,x,c_1,\ldots,c_n)$.

Since $\overline{x}$ and $x$ are interdefinable over $\mathbb{C}$ and $\pi(q) = \tp(x/k)$ is orthogonal to the constants so is $p$ and the condition $e \ind_\mathbb{C} x$ can be strengthened to:
$$ e,c_1,\ldots,c_n \ind_\mathbb{C} \overline{x}.$$

The differential field $\mathrm{dcl}(\C,e,c_1,\ldots, c_n,\overline{x})$ can be therefore described as the free amalgamation of the differential field $K = \mathbb{C} \langle e,c_1,\ldots, c_n \rangle$ generated by $e,c_1,\ldots c_n$ over $\mathbb{C}$ and the differential field $\mathbb{C}\langle x \rangle$ that is:
\begin{itemize}
\item as a field, $\mathrm{dcl}(\C, e, \overline{x}, c_1,\ldots c_n) = K(\overline{x}) \simeq \mathrm{Frac}(\mathbb{C}(\overline{x}) \otimes_\mathbb{C} K)$.
\item Let $D_{1}$ be the derivation  $id \otimes \delta$ on $K(\overline x)$   where $id$ is the $0$ derivation on $\C(\overline x)$,  and $D_{2}$ equal to $\delta \otimes id$ where again $id$ is $0$ on $K$. Then on $K(\overline x)$ we have

$$ \delta = D_{1} + D_{2}$$.

\end{itemize}

\begin{Claim} With the notation above, assume that the differential equation $y' = g(x)y$ admits a solution in $(K(\overline{x}),\delta)$. Then there exists $\alpha \in \mathbb{C}$ such that $y' = (g(x) - \alpha)y$ admits a solution in $(\mathbb{C}(\overline{x}),\delta)$.
\end{Claim}

\begin{proof}[Proof of the claim]
Consider the derivations $D_1,D_2$ on $K(\overline{x})$ defined above.
We have a decomposition $\delta = D_1 + D_2$ and one easily checks that the derivations $D_1$ and $D_2$ commute that is $[D_1,D_2] = 0$ as it suffices to check this identity on a system of generators and therefore on $K$ and $\mathbb{C}\langle x \rangle$ respectively. Note that this is equivalent to $[\delta,D_1] = 0$.\\

Consider now $y \in K(\overline{x})$ our solution of $y' = g(x)y$ . Since $[\delta,D_1] = 0$, we can write:
$$\delta(D_1(y)) = D_1(\delta(y)) = D_1(g(x)y) = g(x)D_1(y)$$

As the  set of solutions of the first order linear differential equation $y' = g(x)y$ in $K(\overline{x})$ is a $K(\overline{x})^\delta$- vector space of dimension at most one, we see that  $D_1(y) = \alpha y$ for some $\alpha \in K(\overline{x})^{\delta}$  (the constants of $K(\overline{x})$). But we already know that $\tp(\overline{x}/\mathbb{C})$ is orthogonal to the constants.  As $x$ is independent from $K$ over $\C$, $\tp(\overline{x}/K)$ is weakly orthogonal to the constants, so $K(\overline{x})^{\delta} = K^{\delta}$. 
So $\alpha\in K$.  We then obtain that, for some $\alpha \in K$, 
$$D_2(y) = \delta(y) - D_1(y) = g(x)y - \alpha y = (g(x) - \alpha)y.$$

We now work inside a model of $\mathrm{DCF}_0$ with a derivation extending $D_{2}$.  Note that  $\C(\overline x)$ is independent from $K$ over $\C$, and that now $K$ is a field of constants. We can write $y$ as a quotient of polynomials in elements of $\C(\overline x)$ and $K$. As the type of ${\bar x}$ over $K$ is definable over $\C$, we can find now $\alpha\in \C$ and $y\in \C(\overline x)$ such that  $D_{2}(y) = (g(x) - \alpha)y$.  But $D_{2}$ agrees with $\delta$ on $\C(\overline x)$.  This completes the proof of Claim 5.3. 
\end{proof}

To conclude the proof  of (i) implies (iii) in Theorem 5.2, note that the hypothesis of Claim 5.3 holds by construction. Hence so does the conclusion. Taking $\alpha\in C$ and $h\in \C(\overline x)$ with $h' = (g(x) - \alpha)h$ gives (iii). 
\end{proof}

\begin{Lemma}\label{almost}
For every non zero $k \in \mathbb{Z}$, denote by $m_k$ the definable map defined by $m_k(x,y) = (x,y^k)$.
\begin{itemize}
\item[(1)] If $q \in S(\mathbb{C})$ is a type living on $\mathcal D^{log}_{g(x)}$ then $q_k = (m_{k})_\ast q$ is a type living on $\mathcal D^{log}_{kg(x)}$ and $\pi(q) = \pi(q_k)$.
\item[(2)] $(q,\pi)$ is uniformly relative almost internal to the constants if and only if there exists a non-zero integer $k$ such that $(q_k,\pi)$ is uniformly relatively internal to the constants.
\end{itemize}
\end{Lemma}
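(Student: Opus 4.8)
The plan is to handle the two parts separately, with part (1) being a routine computation and part (2) requiring a small argument on both directions using the characterization of uniform relative almost internality already established.

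For part (1), I would simply compute: if $(x,y)$ realizes a type living on $\mathcal{D}^{log}_{g(x)}$, so that $\delta(y) = g(x)y$ and $y \neq 0$, then setting $z = y^k$ we get $\delta(z) = ky^{k-1}\delta(y) = ky^{k-1}g(x)y = kg(x)y^k = kg(x)z$, and $z \neq 0$ since $y \neq 0$ (we work in a field). Hence $m_k(x,y) = (x,y^k)$ lands on $\mathcal{D}^{log}_{kg(x)}$, and since $m_k$ fixes the $x$-coordinate, $\pi(q) = \pi(q_k)$. One should note $m_k$ is finite-to-one on the $y$-coordinate ($y \mapsto y^k$ has fibres of size $|k|$ over nonzero values, living inside the constants-internal fibre), so $q_k$ is indeed a complete type over $\mathbb{C}$ with $(x,y)$ and $(x,y^k)$ interalgebraic over $\mathbb{C}$.

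For part (2), the key observation is that $(x,y)$ and $(x, y^k)$ are interalgebraic over $\mathbb{C}$: indeed $y^k \in \dcl(x,y)$ and $y \in \acl(x, y^k)$ since $y$ is one of the finitely many $k$-th roots of $y^k$. For the ``only if'' direction, suppose $(q,\pi)$ is uniformly relatively almost internal to the constants. By Proposition \ref{unif-int-iff-Pint-alg}, for $a = (x,y) \models q$ there is a tuple $t$ with $\tp(t/\mathbb{C}) \in \mathcal{C}_{int}$ and $a \in \acl(\pi(a), t)$. Then $(x, y^k) \in \dcl(a) \subseteq \acl(\pi(a), t)$ as well, and since $\pi(q_k) = \pi(q)$, this shows $(q_k, \pi)$ is uniformly relatively almost internal. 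But now I want more: I claim one can arrange some $k$ for which it is actually uniformly relatively \emph{internal} (no ``almost''). Here is where the structure of the fibres enters. The Galois group of $q_{\pi(a)}$ relative to $\mathcal{C}_{int}$ is, by differential Galois theory for the equation $\delta(y) = g(x)y$, a definable subgroup of the multiplicative group $\mathbb{G}_m(\mathcal{C})$ of the constants; uniform relative almost internality says (by Remark 4.15) this group is finite, hence it is a finite subgroup of $\mathbb{G}_m$, i.e. the group $\mu_k$ of $k$-th roots of unity for some $k$. Passing from $y$ to $y^k$ kills exactly this group: the Galois group of $(q_k)_{\pi(a)}$ relative to $\mathcal{C}_{int}$ becomes trivial, so by Remark 4.15 again $(q_k,\pi)$ is uniformly relatively internal. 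For the ``if'' direction, suppose $(q_k, \pi)$ is uniformly relatively internal for some nonzero $k$. By Proposition \ref{unif-int-iff-Pint-alg} there is a tuple $t'$ with $\tp(t'/\mathbb{C}) \in \mathcal{C}_{int}$ and $(x, y^k) \in \dcl(\pi(a), t')$; since $y \in \acl(x, y^k) \subseteq \acl(\pi(a), t')$, we conclude $(q, \pi)$ is uniformly relatively almost internal.

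The main obstacle is the ``only if'' direction of part (2), specifically the passage from the \emph{finiteness} of the internality data (which is what ``uniformly almost internal'' literally gives) to identifying a specific exponent $k$ that clears it. The clean way is the Galois-theoretic one sketched above: the relevant Galois group is a definable, hence algebraic, subgroup of $(\mathcal{C}, \times)$, and the only finite algebraic subgroups of $\mathbb{G}_m$ are the groups $\mu_k$ of roots of unity; pulling back along $y \mapsto y^k$ trivializes precisely $\mu_k$. One should double-check that this power map interacts correctly with the base — since $\pi$ is untouched and $\pi(q)$ is orthogonal to the constants throughout, the Galois groups are computed over the same base and the ``moreover'' hypotheses of the surrounding results are preserved — but no genuinely new phenomenon arises here beyond bookkeeping.
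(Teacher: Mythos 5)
Your proposal is correct and takes essentially the same route as the paper: parts (1) and the ``if'' direction of (2) via the $\mathrm{dlog}(y^k)=k\,\mathrm{dlog}(y)$ computation and interalgebraicity of $y$ with $y^k$, and the ``only if'' direction via the fact that the relevant Galois group is an algebraic subgroup of $\mathbb{G}_m(\mathcal C)$ whose only finite instances are the groups $\mu_k$, killed by $y\mapsto y^k$. This last point is exactly the content of the reference (Example 1.17 of van der Put--Singer) that the paper cites for that step, so you have merely written out in full what the paper outsources.
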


Everything is obvious using that $\mathrm{dlog}(y^k) = k\mathrm{dlog}(y)$ except the direct implication of (2).  This last point follows from Example 1.17 of \cite{VanderPut-Singer}.

\begin{Corollary}\label{explicituniforminternality}  Consider a differential equation of the form:
$$(E): \begin{cases} y' = yg(x)  \\
x' = f(x) \end{cases} \text{ where } f(x),g(x)  \in \mathbb{C}(x).$$

Denote by $q$ the generic type of $(E)$ and by $\pi$ the projection on $x$.  Assume that  $\frac 1 {f(x)}$ has at least a simple pole and a multiple pole or that $\frac 1 {f(x)}$ has only simple poles but that the quotient of the residues at two of these poles is not rational. Then the following are equivalent:
\begin{itemize}
\item [(i)] $q$ is not orthogonal to the constants.
\item[(ii)]  $(q,\pi)$ is uniformly relatively almost internal.
\item[(iii)] There exists $\beta \in \mathbb{C}$ such that $ \frac {g(x) - \beta} {f(x)}$ has only simple poles with rational residues.
\end{itemize}
\end{Corollary}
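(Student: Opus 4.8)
The plan is to derive the corollary from the general results of Section~3 together with Theorem~\ref{classification-logarithmicderivative} and Lemma~\ref{almost}, so that the only genuinely new ingredient is the fact that the hypothesis on $1/f$ forces $\pi(q)$ to be orthogonal to the constants.

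\emph{Step 1 (the base is orthogonal to $\mathcal C$).} Since $x'=f(x)$ has order one, $\pi(q)$ is of finite $U$-rank (indeed strongly minimal), and since $y'=yg(x)$ has order one in $y$, for $a\models q$ the type $\tp(a/\pi(a))$ also has $U$-rank one. The substantive point is that under the assumption on the poles of $1/f(x)$ the type $\pi(q)$ is orthogonal to $\mathcal C$: this is the known criterion, in terms of the partial fraction decomposition of $1/f$, for the order-one autonomous equation $x'=f(x)$ to be orthogonal to the constants, the two listed configurations (a simple pole together with a pole of higher order, or only simple poles with an irrational ratio between two of the residues) being exactly those that prevent $\int dx/f$ from being, up to an additive constant, the logarithm of an algebraic function (equivalently, prevent exponentiation from producing an algebraic integral curve). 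I expect this step --- locating the precise reference or reproving the criterion --- to be the main obstacle; everything afterwards is formal.

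\emph{Step 2 ((i) $\Leftrightarrow$ (ii)).} Taking $\mathcal P=\mathcal C$, Step~1 gives that $\pi(q)$ is orthogonal to $\mathcal C$, and $\tp(a/\pi(a))$ has $U$-rank one, so Proposition~\ref{equivalence generale} applies verbatim: ``$q$ is nonorthogonal to $\mathcal C$'' (this is (i)) is equivalent to ``$(q,\pi)$ is uniformly relatively almost $\mathcal C$-internal'' (this is (ii)), both also being equivalent to almost triviality.

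\emph{Step 3 ((ii) $\Leftrightarrow$ (iii)).} By Lemma~\ref{almost}(2), (ii) holds if and only if there is a nonzero integer $k$ with $(q_k,\pi)$ uniformly relatively internal to $\mathcal C$, where $q_k=(m_k)_\ast q$ lives on $\mathcal D^{log}_{kg(x)}$ and $\pi(q_k)=\pi(q)$. As $\pi(q_k)=\pi(q)$ is of finite rank and orthogonal to $\mathcal C$ by Step~1, Theorem~\ref{classification-logarithmicderivative} applies to $q_k$, giving: $(q_k,\pi)$ is uniformly relatively internal iff for $b\models\pi(q)$ there are $\alpha\in\mathbb C$ and a nonzero $h$ in the differential field generated by $b$ over $\mathbb C$ with $kg(b)=\alpha+\mathrm{dlog}(h)$. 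Since $x'=f(x)$ has order one this differential field is just $\mathbb C(b)$, with $\delta$ acting by $\delta(r(b))=r'(b)f(b)$ for $r\in\mathbb C(x)$; as $b$ is transcendental over $\mathbb C$, the relation $kg(b)=\alpha+\mathrm{dlog}(h)$ is equivalent to the identity of rational functions
\[
\frac{kg(x)-\alpha}{f(x)}=\frac{h'(x)}{h(x)}.
\]
To finish I would use the explicit form of logarithmic derivatives: writing $h=c\prod_i(x-a_i)^{n_i}$ with $n_i\in\mathbb Z$, one has $h'/h=\sum_i\frac{n_i}{x-a_i}$, so the logarithmic derivatives of elements of $\mathbb C(x)^\times$ are exactly the rational functions of the form $\sum_i\frac{n_i}{x-a_i}$ with $n_i\in\mathbb Z$; conversely $\sum_i\frac{r_i}{x-a_i}$ with $r_i\in\mathbb Q$ becomes such a logarithmic derivative after multiplying by a common denominator $k$ of the $r_i$. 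Thus the existence of $(k,\alpha,h)$ as above is equivalent to the existence of $\beta=\alpha/k\in\mathbb C$ such that $\frac{g(x)-\beta}{f(x)}$ has only simple poles, all with rational residues, which is (iii). Combining Steps~2 and~3 gives the chain (i) $\Leftrightarrow$ (ii) $\Leftrightarrow$ (iii).
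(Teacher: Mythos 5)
Your proposal is correct and follows essentially the same route as the paper: the pole condition on $1/f$ is exactly the Rosenlicht-style criterion for orthogonality of the base to the constants (the paper cites Rosenlicht and Example 2.20 of Hrushovski--Itai rather than reproving it), (i)$\Leftrightarrow$(ii) is Proposition \ref{equivalence generale}, and (ii)$\Leftrightarrow$(iii) is Lemma \ref{almost} plus Theorem \ref{classification-logarithmicderivative} combined with the classical characterization of logarithmic derivatives of rational functions as those with only simple poles and integer residues. The only difference is cosmetic: you spell out the partial-fraction argument for that last characterization, which the paper also just cites.
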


\begin{proof}
First, note that the condition on $1/f(x)$ expresses that the type $\pi(q)$ (which is the generic type of $x' = f(x)$) is orthogonal to the constants (see \cite{Rosenlicht} and \citep[Example 2.20]{Hrushovski-Itai}).  The equivalence between (i) and (ii) therefore follows from Proposition \ref{equivalence generale}. To show that (ii) and (iii) are equivalent, we use Theorem \ref{classification-logarithmicderivative}:

Since $q$ lives on $\mathcal D^{log}_{g(x)}$, $(q,\pi)$ is uniformly relatively almost internal  if and only if  for some non-zero $k \in \mathbb{Z}$, $(m_k)_\ast q$ is uniformly relatively internal to the constants if and only if  there exists $\alpha \in \mathbb{C}$ and $h \in \mathbb{C}(x)$ such that $kg(x) = \alpha + f(x)  \frac {\frac{dh} {dx}} {h(x)}$ if and only if there exists $\beta \in \mathbb{C}$, a non-zero integer $k$ and $h \in \mathbb{C}(x)$ such that:
$$\frac{g(x) - \beta} {f(x)} =  \frac 1 {k} \frac {\frac{dh} {dx}} {h(x)}.$$

It is well-known that a rational function is of the form $\frac {\frac{dh} {dx}} {h(x)}$ if and only if it admits only simple poles and the residue at any of these poles is an integer (see for example \citep[Example 2.20]{Hrushovski-Itai}). The corollary follows.
\end{proof}

\begin{Example}
Consider $(E_1)$ and $(E_2)$ the differential equations given respectively by  
$$(E_1): \begin{cases} y' = yx  \\
x' = x^2(x-1) \end{cases} \text{ and } (E_2): \begin{cases} y' = yx  \\
x' = x^3(x-1) \end{cases} .$$
Denote by $q_1$ the generic type of $(E_1)$ and $q_2$ the generic type of $(E_2)$. Corollary \ref{explicituniforminternality} shows that $(q_1,\pi)$ is uniformly relatively almost internal but that $(q_2,\pi)$ is not. So $q_2$ is orthogonal to the constants and $q_1$ is not by Proposition \ref{equivalence generale}.

\end{Example}
\subsection{Case of the derivative} With appropriate modifications, the results of the previous section  hold when the logarithmic derivative is replaced by the derivative.

\begin{Definition} For every $g(x) \in \mathbb{C}\langle x\rangle$ written as  $g(x) = \frac{g_1(x)} {g_2(x)}$ where $g_1(x),g_2(x) \in \mathbb{C} \lbrace x \rbrace$ are two complex differential polynomials, we define:
$$ \mathcal D^{\delta}_g =  \lbrace (x,y) \in \mathcal U^2 \text{ }| \text{  } g_2(x) \neq 0 \text{ and } \delta(y) = g(x) \rbrace.$$ 
\end{Definition}

\begin{Proposition} Fix $g(x) \in \mathbb{C}\lbrace x\rbrace$ and consider $q \in S(\mathbb{C})$ a type living on $\mathcal D_g$. If $\pi(q)$ is of finite rank and orthogonal to the constants, then the following are equivalent:
\begin{itemize}
\item[(i)] $(q,\pi)$ is uniformly internal to the constants.
\item[(ii)] $(q,\pi)$ is trivial
\item[(iii)] For some (any) realization $b \models \pi(q)$, there exists $\alpha \in \mathbb{C}$ and a non zero $h$ of the differential field generated by $b$ and $\mathbb{C}$ such that
$$g(b) = \alpha + \delta(h).$$
\end{itemize}
\end{Proposition}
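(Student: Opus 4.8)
The plan is to mirror the proof of Theorem \ref{classification-logarithmicderivative} almost verbatim, replacing the multiplicative structure $\mathrm{dlog}$ by the additive one $\delta$ throughout. The implications (iii)$\Rightarrow$(ii)$\Rightarrow$(i) should be the easy directions. For (iii)$\Rightarrow$(ii): if $y$ is a solution of $\delta(y) = g(x)$ and $h$ is as in (iii) with $\delta(h) = g(b) - \alpha$, then $\delta(y - h) = g(b) - (g(b) - \alpha) = \alpha$, so the $\mathbb{C}$-definable map $(x,y) \mapsto (x, y - h(x))$ fixes $\pi(q)$ and sends $q$ to $\pi(q) \otimes r$, where $r = \tp(z/\mathbb{C})$ for $z$ a solution of $z' = \alpha$ (which is internal to, in fact definable over, the constants together with $\alpha$). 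Then (ii)$\Rightarrow$(i) is immediate, exactly as in Proposition \ref{equivalence generale} (a product with an internal type is uniformly relatively internal).

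The substance is (i)$\Rightarrow$(iii), and here I would reproduce the argument of Theorem \ref{classification-logarithmicderivative} step by step. Take $x \models \pi(q)$, let $\overline{x}$ be a finite tuple generating $\mathbb{C}\langle x\rangle$ over $\mathbb{C}$, and let $y \models q_x$, so $\delta(y) = g(x)$. Uniform relative internality gives $e \ind_{\mathbb{C}} x$ and constants $c_1,\dots,c_n$ with $y \in \mathrm{dcl}(e, x, c_1,\dots,c_n)$; using orthogonality of $\pi(q)$ (hence of $p = \tp(\overline{x}/\mathbb{C})$) to the constants, strengthen this to $e, c_1,\dots,c_n \ind_{\mathbb{C}} \overline{x}$. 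Then, as before, $L := \mathrm{dcl}(\mathbb{C}, e, \overline{x}, c_1,\dots,c_n)$ is the free amalgam $K(\overline{x})$ with $K = \mathbb{C}\langle e, c_1,\dots,c_n\rangle$, and $\delta = D_1 + D_2$ with $D_1 = id \otimes \delta$, $D_2 = \delta \otimes id$, commuting derivations. The analogue of Claim 5.3 is: if $y' = g(x)$ has a solution in $(K(\overline{x}), \delta)$ then $y' = g(x) - \alpha$ has a solution in $(\mathbb{C}(\overline{x}), \delta)$ for some $\alpha \in \mathbb{C}$. To prove it, apply $D_1$ to $\delta(y) = g(x)$: since $[\delta, D_1] = 0$ and $D_1(g(x)) = 0$ (as $g(x) \in \mathbb{C}\langle x\rangle$), we get $\delta(D_1(y)) = 0$, i.e.\ $D_1(y) \in K(\overline{x})^{\delta}$. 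Orthogonality of $\tp(\overline{x}/\mathbb{C})$ to the constants, plus $\overline{x} \ind_{\mathbb{C}} K$, forces $K(\overline{x})^{\delta} = K^{\delta}$, so $D_1(y) = \alpha \in K$. Hence $D_2(y) = \delta(y) - D_1(y) = g(x) - \alpha$. Now pass to a model of $\mathrm{DCF}_0$ with a derivation extending $D_2$: here $K$ is a field of constants, $\mathbb{C}(\overline{x}) \ind_{\mathbb{C}} K$, and since $\tp(\overline{x}/K)$ is definable over $\mathbb{C}$ one can descend to find $\alpha \in \mathbb{C}$ and $y \in \mathbb{C}(\overline{x})$ with $D_2(y) = g(x) - \alpha$; but $D_2$ agrees with $\delta$ on $\mathbb{C}(\overline{x})$, proving the claim. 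Applying the claim (its hypothesis holds by construction) and taking $h \in \mathbb{C}(\overline{x}) \subseteq \mathrm{dcl}(\mathbb{C}, b)$ with $\delta(h) = g(b) - \alpha$ yields (iii).

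The only genuine obstacle is making sure the additive analogue of the ``dimension at most one'' step goes through: in the logarithmic case one used that solutions of $y' = g(x)y$ form a $1$-dimensional constant vector space, whereas here the equation $\delta(y) = g(x)$ is inhomogeneous, so its solution set is a torsor under the constants. But that is exactly what is needed: from $\delta(y) = g(x)$ one does not compare two solutions of a homogeneous equation, one simply observes directly that $D_1(y)$ is killed by $\delta$, which is even cleaner than the multiplicative case. So in fact the argument is slightly shorter here, and I expect no real difficulty — the proof is a routine transcription, the verification being that every use of multiplicative structure in Theorem \ref{classification-logarithmicderivative} has a harmless additive counterpart. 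One should also remark, as the paper does implicitly, that unlike Corollary \ref{explicituniforminternality} there is no need to pass to $m_k$ (i.e.\ to replace $y$ by $y^k$), since in the additive setting ``almost'' and ``exact'' internality coincide more readily; this is why the statement is phrased with uniform internality rather than uniform almost internality.
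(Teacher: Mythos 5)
Your proposal is correct and follows exactly the route the paper intends: the paper explicitly leaves this proof to the reader as a transcription of Theorem \ref{classification-logarithmicderivative}, and your adaptation (including the observation that $D_1(g(x))=0$ makes the key claim even easier than the homogeneous case, since $D_1(y)$ is directly seen to be a constant) is the right one.
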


The proof (similar to the proof of Theorem \ref{classification-logarithmicderivative}) is left to the reader. Likewise the analogous statement to Corollary 5.5, as well as its proof, is left to the reader.

\begin{Example}
Consider $(E_1)$ and $(E_2)$ the differential equations given respectively by  
$$(E_1): \begin{cases} y' = x  \\
x' = x^2(x-1) \end{cases} \text{ and } (E_2): \begin{cases} y' = x  \\
x' = x^2(x-1)(x+1) \end{cases} .$$

Denote by $q_1$ the generic type of $(E_1)$ and $q_2$ the generic type of $(E_2)$. 

For $\alpha = 1$, $\frac {x - \alpha}{x^2(x-1)}$ does not have simple poles so it is of the form $\frac d  {dx} h(x)$ for some rational function $h \in \mathbb{C}(x)$ (indeed $h(x) = -1/x$ here). It follows that  $(q_1,\pi)$ is uniformly relatively almost internal, so $q_1$ is not orthogonal to the constants.

On the other hand, there are no $\beta \in \mathbb{C}$ such that $\frac {x - \beta}{x^2(x-1)(x+1)}$ does not have a simple pole. It follows that $(q_2,\pi)$ is not uniformly relatively internal so that $q_2$ is orthogonal to the constants.
\end{Example}
\section{Lifting orthogonality to the constants from a hypersurface}\label{linearization}

In this section, we use the language of $D$-varieties to formulate a non-linear version (Theorem \ref{Theorem-normal bundle}) of Proposition \ref{equivalence generale}. Recall from the preliminaries section that if $(K,\delta)$ is a differential field, one can consider the category of $D$-varieties  over $(K,\delta)$ whose objects are pairs $(X,D)$  where $X$ is an algebraic variety over $K$ and $D: \mathcal O_X \rightarrow \mathcal O_X$ is a derivation extending the derivation $\delta$ on $K$. 

We say that a closed subvariety $Z$ of $(X,D)$ is invariant  if the sheaf of ideals $\mathcal I_Z$ defining $Z$ is invariant under the derivation $D$ that is
$$D(\mathcal I_Z(U)) \subset \mathcal I_Z(U) \text{ for all open subsets }U \text{ of } X.$$

If $\mathcal U$ is a fixed differentially closed field extending $(K,\delta)$ and $(X,D)$ a $D$-variety over $(K,\delta)$, $(X,D)^\delta$ denotes the set of solutions of $(X,D)$ inside $\mathcal U$. This set can be identified with a $K$-definable subset of $\mathcal U^n$.

\subsection{Linearization along an invariant subvariety}  Let $X$ be a smooth algebraic variety over some field $k$ and $Z$ be a closed subvariety of $X$ defined by a sheaf of ideal $\mathcal I$. The conormal bundle $\mathcal N_{X/Z}^\vee$ is the kernel of the morphisms of coherent sheaves on $Z$:
$$ 
i^\ast \Omega^1_{X/k} \rightarrow  \Omega^1_{Z/k} $$
that sends a one form $\omega$ on $X$ along $Z$ to the one form $i^\ast \omega$ on $Z$.

If $f \in \mathcal O_X(U)$ is a function on an open set $U$ of $X$ vanishing on $Z$, then $i^\ast df = 0$. We therefore get a morphism $i^\ast \mathcal I \rightarrow  \mathcal N^{\vee}_{X/Z}$ whose kernel contains all the restrictions of functions $f \in \mathcal I^2(U)$ that is a morphism: 
$$i^\ast (\mathcal I / \mathcal I^2) \rightarrow \mathcal N^{\vee}_{X/Z}.$$
which sends a function $f \in \mathcal O_X(U)$ vanishing on $Z$ to the one-form $df \in \mathcal N^{\vee}_{X/Z}(U)$. When $Z$ is a smooth variety, it is well known that $\mathcal N_{X/Z}^\vee$ is a a locally free sheaf on $Z$ and that the previous morphism is an isomorphism of locally free sheaves on $Z$ (see \citep[Chapter 2, Section 8]{Hartshorne}).

%Since $f_i$ is constant on $Z_i$, the differential $df_i_{TZ}$ is equal to zero so the condition expresses that  $\in Ker i^\ast$
%We denote $Z_{0} \subset Z$ the dense open subspace of $Z$ consisting of smooth points of $Z$. The sheaf $\mathcal I/\mathcal I^2$ is locally free of rank $\mathrm{codim}_X(Z)$ on $Z_0$ and we have an exact sequence of locally free sheaf over $Z_0$: 
%which identifies  $\mathcal I/\mathcal I^2$ on $Z_0$ with the conormal sheaf of $Z$ in $X$. 

\begin{Definition}
Let $X$ be a smooth algebraic variety over some field $k$ and $Z$ be a smooth closed subvariety of $X$. 
%Denote by $Z_0$ the variety of smooth points of $Z$. 
The normal bundle $\pi: N_{X/Z} \rightarrow Z$ is the vector bundle of rank  $\mathrm{codim}_X(Z)$ over $Z$ whose sheaf of sections is $\mathcal N_{X/Z}$.
\end{Definition}

The structure sheaf of $N_{X/Z}$ can be described as follows: first, for every open set $U \subset X$ and every function $f \in \mathcal I(U)$, the one form $df \in \mathcal N^{\vee}_{X/Z}(U)$ defines a function $ \overline{f}  \in \mathcal O_{N_{X/Z}}(\pi^{-1}(U \cap Z))$ defined using the adjunction:
$$\overline{f}(x,v) = df_x(v) \text{  for every  }(x,v) \in N_{X/Z}.$$
We can then cover $X$ by affine open sets $U$ such that there exist functions $f_1,\ldots f_p \in \mathcal O_X(U)$ whose images  form a basis of $\mathcal I(U)/\mathcal I^2(U)$. The structure sheaf on $N_{X/Z}$ on such opens sets $U$ is given by:
$$ \mathcal O_{N_{X/Z}}(\pi^{-1}(U \cap Z)) =  \mathrm{Sym}^\bullet_{\mathcal O_Z(U\cap Z)} (\mathcal I(U)/\mathcal I^2(U)) = \mathcal O_Z(U\cap Z)[\overline{f_1},\ldots, \overline{f_p}]. $$ 

\begin{Proposition} \label{normalbundle} Let  $(X,D)$ be a smooth $D$-variety over a differential field $(K,\delta)$ and $Z$ a smooth closed invariant subvariety of $X$. There exists a unique derivation $D_{lin}$ on the normal bundle $N_{X/Z}$ of $Z$ in $X$ such that:
\begin{itemize}
\item[(1)] The canonical projection $\pi: (N_{X/Z},D_{lin}) \rightarrow (Z,D_{Z})$ is a morphism of $D$-varieties over $(K,\delta)$.
\item[(2)] If $z \in (Z,D_Z)^\delta$, then the fibre $(N_{X/Z},D_{lin}) _z$ is a linear differential equation over the differential field $k(z)$. 
\item[(3)] For every $f \in \mathcal \mathcal \mathcal O_X(U)$ vanishing on $Z$,
$$D_{lin}(\overline{f}) = \overline{D(f)}.$$
\end{itemize}
\end{Proposition}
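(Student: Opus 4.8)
The plan is to construct $D_{lin}$ locally on the affine opens $U$ described above, check that the local formulas glue, and then verify properties (1)--(3). First I would work on an affine open $U \subset X$ over which $\mathcal I(U)/\mathcal I^2(U)$ is free with basis the images of $f_1, \dots, f_p \in \mathcal I(U)$; then $\mathcal O_{N_{X/Z}}(\pi^{-1}(U \cap Z)) = \mathcal O_Z(U \cap Z)[\overline{f_1}, \dots, \overline{f_p}]$. Since $Z$ is invariant, $D$ restricts to $D_Z$ on $\mathcal O_Z$, so we have the derivation $D_Z$ on the coefficient ring; it remains to say where the generators $\overline{f_i}$ go. Property (3) forces the choice: I would \emph{define} $D_{lin}(\overline{f_i}) := \overline{D(f_i)}$. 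This makes sense because $Z$ invariant means $D(\mathcal I) \subseteq \mathcal I$, so $D(f_i) \in \mathcal I(U)$ and $\overline{D(f_i)} \in \mathcal N^\vee_{X/Z}(U \cap Z) \subseteq \mathcal O_{N_{X/Z}}(\pi^{-1}(U \cap Z))$ is a legitimate (fibrewise-linear) function. Extending by the Leibniz rule and $\delta$-semilinearity over $K$ gives a well-defined derivation on the polynomial ring $\mathcal O_Z(U \cap Z)[\overline{f_1}, \dots, \overline{f_p}]$.

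The first thing requiring care is that this is independent of the chosen generators $f_1, \dots, f_p$: if $f \in \mathcal I^2(U)$ then $\overline{f} = 0$ in $\mathcal N^\vee_{X/Z}$, and I need $\overline{D(f)} = 0$ as well; but $f = \sum g_j h_j$ with $g_j, h_j \in \mathcal I(U)$ gives $D(f) = \sum (D(g_j) h_j + g_j D(h_j)) \in \mathcal I^2(U)$ by invariance, so $\overline{D(f)} = 0$. More generally the map $\overline{f} \mapsto \overline{D(f)}$ is well-defined and additive on $\mathcal I(U)/\mathcal I^2(U)$, and one checks it is $\delta$-semilinear in the sense that $\overline{D(a f)} = D_Z(\bar a)\overline{f} + \bar a\,\overline{D(f)}$ for $a \in \mathcal O_X(U)$, $f \in \mathcal I(U)$ (again expanding $D(af)$ and reducing mod $\mathcal I^2$). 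This shows the prescription "$D_{lin}$ acts as $D_Z$ on $\mathcal O_Z$ and sends $\overline f$ to $\overline{D(f)}$" is forced and coherent; uniqueness is then immediate since (3) together with (1) pins down $D_{lin}$ on a generating set of the structure sheaf.

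Next I would check that the locally defined derivations agree on overlaps $U \cap U'$, which is automatic once we observe that on the intersection both are characterized by the same intrinsic recipe (act as $D_Z$ on functions pulled back from $Z$, act as $\overline{f} \mapsto \overline{D(f)}$ on conormal functions); since these generate the structure sheaf over any affine open of $N_{X/Z}$ lying over $U \cap U'$, the two restrictions coincide. Hence the $D_{lin}|_U$ glue to a global derivation $D_{lin}$ on $\mathcal O_{N_{X/Z}}$ extending $\delta$. Property (1) holds because $D_{lin}$ restricted to $\pi^* \mathcal O_Z = \mathcal O_Z$ is exactly $D_Z$, so $\pi$ is a morphism of $D$-varieties. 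Property (3) holds by construction on each $U$, and hence globally. For property (2): fix $z \in (Z, D_Z)^\delta$, so $k(z)$ is a differential subfield of $\mathcal U$; the fibre $(N_{X/Z})_z$ is the $k(z)$-vector space $\mathrm{Spec}\, \mathrm{Sym}^\bullet (\mathcal I/\mathcal I^2)^\vee_z$, and the derivation $D_{lin}$ descends to it because $D_Z(z) = 0$ kills the coefficient-field contribution; what remains is the map $\overline{f_i} \mapsto \overline{D(f_i)}$, which is $k(z)$-linear in the $\overline{f_j}$'s by the semilinearity computation above (the $\delta$-part lands in the derivative of the coefficients, which vanish at $z$). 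So on the fibre $(N_{X/Z})_z$ the derivation $D_{lin}$ is given by $v' = A(z)\, v$ for a matrix $A$ with entries in $k(z)$ — a linear differential equation over $k(z)$, as claimed.

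The main obstacle I anticipate is not any single computation but keeping the bookkeeping honest when passing between the sheaf-theoretic description of $N_{X/Z}$ and the explicit affine coordinates $\mathcal O_Z(U \cap Z)[\overline{f_1}, \dots, \overline{f_p}]$: specifically, verifying that "$\overline{f} \mapsto \overline{D(f)}$" genuinely defines a map out of $\mathcal I/\mathcal I^2$ (not just out of $\mathcal I$) and is compatible with the $\mathcal O_Z$-module structure after twisting by $D_Z$, since these are precisely the facts that make the construction coordinate-free and hence gluable. Once those compatibilities are in hand, properties (1)--(3) and uniqueness are essentially formal.
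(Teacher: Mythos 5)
Your proposal is correct and follows essentially the same route as the paper: both arguments rest on the observation that $D$ preserves $\mathcal I$ and hence $\mathcal I^2$, so it descends to a map $\mathcal L_D$ on $i^\ast(\mathcal I/\mathcal I^2)$ satisfying the twisted Leibniz identity $\mathcal L_D(\overline{a}\cdot\overline{m}) = D_Z(\overline{a})\overline{m} + \overline{a}\cdot\mathcal L_D(\overline{m})$, which makes the conormal sheaf a $D$-module over $(\mathcal O_Z, D_Z)$ and extends uniquely to the symmetric algebra giving the structure sheaf of $N_{X/Z}$. Your additional explicit checks (well-definedness modulo $\mathcal I^2$, gluing over overlaps, the specialization argument for (2)) are just more detailed versions of what the paper leaves as "an easy computation."
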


\begin{proof}
Uniqueness follows from the discussion above. To prove existence, note that since $D$ preserves $\mathcal I$, it also preserves $\mathcal I^2$ so the restriction of $D$ to $\mathcal I$ defines a map on the quotient
$$ \mathcal L_D: i^\ast (\mathcal I /\mathcal I^2) \rightarrow i^\ast (\mathcal I/ \mathcal I^2).$$
Denoting by $D_Z$ the derivation induced by $D$ on $\mathcal O_Z = i^\ast \mathcal O_X/\mathcal I$, an easy computation shows that for every local sections $a \in \mathcal O_X(U)$ and $m \in \mathcal I(U)$
$$ \mathcal L_D(\overline{a}.\overline{m}) = D(a)m + a.D(m)  \text{ mod } \mathcal I^2  = D_Z(\overline{a})m + \overline{a}. \mathcal L_D(m)$$  
which means that $(i^\ast (\mathcal I /\mathcal I^2),\mathcal L_D)$ is a sheaf of $D$-modules over $(\mathcal O_Z,D_Z)$. The description of the structure sheaf of $N_{X/Z}$ shows that $\mathcal L_D$ extends uniquely to a derivation $D_{lin}$ on $\mathcal O_{N_{X/Z}}$ satisfying (1),(2) and (3).
\end{proof}
\begin{Definition}
Let $Z$ be a smooth closed invariant subvariety of $(X,D)$. The $D$-variety $(N_{X/Z}, D_{lin})$ given by Proposition \ref{normalbundle} is called \textit{the first order linearization of $(X,D)$}.
\end{Definition}

Note that if we assume that $Z$ is irreducible then so is $N_{X/Z}$ (as the total space of any vector bundle over $Z$). If $q \in S(K)$ denotes the generic type of $(N_{X/Z}, D_{lin})$ then by  (2) of Proposition \ref{normalbundle}, $(q,\pi)$ is relatively internal to the constants. To study almost uniform internality, we describe more precisely the generic fibre of $\pi$ when $Z$ is an invariant hypersurface.

\begin{Fact}[\cite{VanderPut-Singer}, pp74]
For every differential field $(K,\delta)$, the isomorphism classes (modulo gauge equivalence) of one-dimensional $D$-modules over $(K,\delta)$ are classfied by the cokernel $H(K,\delta)$ of 
$$\mathrm{dlog}_\delta : \mathbb{G}_m(K) \rightarrow \mathbb{G}_a(K).$$
\end{Fact}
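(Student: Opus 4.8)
This is a classical fact from the theory of differential modules, so the plan is to recall the Picard--Vessiot / $D$-module dictionary and show that a one-dimensional $D$-module over $(K,\delta)$ is, up to gauge equivalence, determined by a nonzero element of $K$ (its ``logarithmic derivative data''), and that two such elements give isomorphic modules precisely when they differ by $\mathrm{dlog}_\delta(u)$ for some $u \in \mathbb{G}_m(K)$. First I would fix a one-dimensional $D$-module $(M,\nabla)$ over $(K,\delta)$; choosing a $K$-basis $e$ of $M$ we have $\nabla(e) = g\, e$ for a unique $g \in K$, so $(M,\nabla)$ is (noncanonically) represented by the scalar equation $y' = g y$, and we record the class $[g] \in \mathbb{G}_a(K)$. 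This gives a surjection from the set of one-dimensional $D$-modules (with chosen basis) onto $\mathbb{G}_a(K) = K$.

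Next I would compute the effect of a change of basis: replacing $e$ by $\tilde e = u^{-1} e$ for $u \in \mathbb{G}_m(K) = K^{\times}$, one gets $\nabla(\tilde e) = \nabla(u^{-1}e) = (\delta(u^{-1}) + u^{-1} g)\, e = (g + u\,\delta(u^{-1}))\, \tilde e = (g - \delta(u)/u)\,\tilde e$, so the new scalar datum is $g - \mathrm{dlog}_\delta(u)$. Conversely, two scalar equations $y'=gy$ and $y' = \tilde g y$ define isomorphic (gauge equivalent) one-dimensional $D$-modules if and only if there is such a $u$ with $\tilde g = g - \mathrm{dlog}_\delta(u)$, since a gauge equivalence between rank-one equations is exactly multiplication by a nonzero scalar in $K$. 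Hence the assignment $(M,\nabla) \mapsto [g]$ descends to a well-defined bijection from isomorphism classes of one-dimensional $D$-modules onto the cokernel $H(K,\delta) = \mathbb{G}_a(K)/\mathrm{dlog}_\delta(\mathbb{G}_m(K))$. The final point, which is purely formal, is that this bijection is compatible with the natural abelian group structure (tensor product of $D$-modules corresponds to addition of the $g$'s, since $\nabla(e_1 \otimes e_2) = (g_1 + g_2)(e_1 \otimes e_2)$), so $H(K,\delta)$ classifies them as a group.

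There is no real obstacle here; the only thing to be careful about is the bookkeeping of signs and of the direction of the exact sequence (whether one uses $u$ or $u^{-1}$, and whether $\mathrm{dlog}_\delta$ is $u \mapsto \delta(u)/u$ as the paper defines it), and the fact that ``gauge equivalence'' for rank-one modules is genuinely just rescaling by $K^\times$ rather than something larger. Since the statement is attributed to \cite{VanderPut-Singer}, pp.~74, I would in fact just cite it; if a proof is wanted, the two displayed computations above are the entire content.
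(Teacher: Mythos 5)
Your argument is correct and complete: the change-of-basis computation $\nabla(u^{-1}e) = (g - \delta(u)/u)\,u^{-1}e$ is exactly the standard proof, and it matches the sign convention the paper uses later (e.g.\ in Lemma 6.7, where $\mathcal L_D(\overline t) = h\overline t$ yields the class $h + \mathrm{dlog}(K(z))$). The paper itself gives no proof of this statement --- it is stated as a Fact with only the citation to van der Put--Singer --- so there is nothing to compare against; your write-up is precisely the content of that reference.
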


If $M$ is a one dimensional $D$-module over $(K,\delta)$, we write $[M] \in H(K,\delta)$ for its equivalence class.

\begin{Lemma}\label{presentation-nomarlbundle}  Let $(X,D)$ be a smooth affine $D$-variety over $(K,\delta)$ and $Z$ a closed smooth invariant hypersurface of $(X,D)$.  Denote by $t$ a generator of the ideal defining $Z$ and write  $$D(t) = ht \text{ for some function } h \in \mathcal O_X(X).$$

If $z \in \mathcal U$ realizes the generic type of $(Z,D_Z)$ over $K$, then the fibre $(N_{X/Z},D_{lin})_z$ is one dimensional $D$-module over the differential field $K(z)$ and 
$$[(\mathcal N_{X/Z},D_{lin})_z] = - h(z) + \mathrm{dlog}(K(z)) \in H(K(z),\delta).$$
\end{Lemma}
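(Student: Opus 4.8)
The plan is to compute the linearized derivation $D_{lin}$ explicitly on the fibre over the generic point $z$ of $Z$, using the presentation of the structure sheaf of $N_{X/Z}$ recalled before Proposition \ref{normalbundle}. Since $Z$ is a smooth hypersurface cut out by a single equation $t$ (at least locally, and since $X$ is affine we may work with a global generator of $\mathcal{I}_Z$), the conormal sheaf $\mathcal{N}^{\vee}_{X/Z}$ is free of rank one on $Z$, generated by the image $\overline{t}$ of $t$ in $\mathcal{I}/\mathcal{I}^2$. Correspondingly $\mathcal{O}_{N_{X/Z}}(\pi^{-1}(U\cap Z)) = \mathcal{O}_Z(U\cap Z)[\overline{t}]$, so the fibre over $z$ is the one-dimensional $K(z)$-vector space spanned by the coordinate function dual to $\overline{t}$, call it $v$. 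By part (3) of Proposition \ref{normalbundle} we have $D_{lin}(\overline{t}) = \overline{D(t)} = \overline{ht} = h\cdot\overline{t}$ (reducing modulo $\mathcal{I}^2$, which is exactly where the relation $D(t)=ht$ gets used: the error terms coming from $\mathcal{I}^2$ die). Specializing at $z$, the derivation on the line $K(z)\cdot v$ acts by $D_{lin}(v) = -h(z)\, v$ — the sign flip being the usual one when passing from a section of $\mathcal{N}^\vee$ to a linear coordinate on $N$, i.e. from $\overline{t}$ to its dual $v$.

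Next I would translate this into the classification of one-dimensional $D$-modules via the Fact cited from \cite{VanderPut-Singer}: a one-dimensional $D$-module over $(K(z),\delta)$ given by $\delta(v) = a v$ has class $a + \mathrm{dlog}(K(z)^\times) \in H(K(z),\delta)$, and two such are gauge-equivalent iff the $a$'s differ by an element of the image of $\mathrm{dlog}_\delta$. Plugging in $a = -h(z)$ gives $[(\mathcal{N}_{X/Z},D_{lin})_z] = -h(z) + \mathrm{dlog}(K(z)) \in H(K(z),\delta)$, which is the claim. I should also note that $z$ realizing the generic type of $(Z,D_Z)$ over $K$ is what guarantees $K(z)$ is the correct fibre field (the residue field at the generic point, made into a differential field by $D_Z$), so that the fibre $(N_{X/Z},D_{lin})_z$ is genuinely the base change of the $D$-module structure to $K(z)$ and part (2) of Proposition \ref{normalbundle} applies.

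The main obstacle, such as it is, is bookkeeping rather than conceptual: one must be careful that the generator $t$ of $\mathcal{I}_Z$ is only determined up to a unit of $\mathcal{O}_X$, and changing $t$ to $ut$ changes $h$ to $h + \mathrm{dlog}(u)$ — but this is precisely a shift by an element of $\mathrm{dlog}(K(z)^\times)$ (after restricting $u$ to $Z$ and evaluating at $z$), so the class in $H(K(z),\delta)$ is well-defined, consistent with the statement. The only other point requiring a line of care is the identification of the dual coordinate and the resulting sign; one can pin it down by the adjunction formula $\overline{f}(x,v) = df_x(v)$ given in the text, which forces $D_{lin}$ acting on the linear functional $\overline{t}$ to be $+h\overline{t}$ and hence on the vector-space coordinate to be $-h$. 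Everything else is the direct computation sketched above, which I would present compactly without belaboring the sheaf-theoretic details already established in Proposition \ref{normalbundle}.
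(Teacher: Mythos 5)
Your proposal is correct and follows essentially the same route as the paper's proof: reduce to the affine case where $\overline{t}$ freely generates $\mathcal I/\mathcal I^2$, compute $\mathcal L_D(\overline{t}) = D(t) \bmod \mathcal I^2 = h\cdot\overline{t}$, and then pass to the dual $D$-module to account for the sign $-h(z)$ before invoking the classification of one-dimensional $D$-modules. The remark on independence of the choice of generator $t$ is a pleasant addition but not needed for the statement.
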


\begin{proof}
We can assume that $X$ is affine. Since the ideal $I_Z = (t)$ of $K[X]$ is generated by $t$ as an $A$-module, it follows that $\overline{t} = t/I_Z^2$ generates  $I_Z/I_Z^2$ as a $K[Z] =K[X]/I_Z$-module.  Moreover, 

So $\mathcal N_{X/C,z}^\vee$ is a one-dimensional $D$-module generated by $\overline{t}$ over $K(Z)$.  Now we have:

$$\mathcal L_D(\overline{t}) = D(t) \text{ mod } I_Z^2 = h \cdot t \text{ mod } I_Z^2  = h \cdot \overline{t}.$$ 

Let $z \in \mathcal U$ be a realization of the generic type of $(Z,D_Z)$. It follows that $(\mathcal N_{X/C}^\vee, \mathcal L_D)_z$ is isomorphic to the $D$-module $K(z).\overline{t}$ with the connection $\nabla(\overline{t}) = h(z)\cdot \overline{t}$. Hence, 
 $$[ (\mathcal N_{X/C}^\vee, \mathcal L_D)_z] =  h(z) + \mathrm{dlog}(K(z)) \text{ and } [(\mathcal N_{X/Z},D_{lin})_z] = - h(z) + \mathrm{dlog}(K(z))$$
since the two are dual $D$-modules.
%
%where $\pi: A \rightarrow A/I$ is the quotient morphim. This shows that in the (dual) basis given by $t_n$, the dual $D$-module $N^n_{X/Z}$ is isomorphic to $\mathrm{dlog}(y) = - n.\pi(h) = - n.h_{|Z}$.
\end{proof}
\begin{Example}
Let $v(x,y) = f(x,y) \frac \partial {\partial x} + g(x,y) \frac  {\partial} {\partial dy}$ be a complex planar polynomial vector field. The line $y = 0$ is invariant under the vector field $v$ if and only if $y$ divides $g(x,y) \in \mathbb{C}[x,y]$.

Assume that $y = 0$ is an invariant curve and write $g(x,y) = yg_1(x,y)$. The normal bundle of the line $y = 0$ is isomorphic to $\mathbb{C}^2$ and the first-order linearization along $y = 0$ is given by the vector field:
$$ v_{lin}(x,y) = f(x,0) \frac \partial {\partial x} + yg_1(x,0) \frac \partial {\partial y}.$$
\end{Example}

\subsection{Non-uniform invariant hypersurfaces} If $Z$ is a  closed invariant hypersurface of $(X,D)$, we denote by $D_Z$ the derivation induced by $D$ on $Z$.

\begin{Theorem}\label{Theorem-normal bundle}
Let $Z$ be a smooth closed invariant hypersurface of $(X,D)$. Denote by $q$ the generic type of the first order linearization $(N_{X/Z},D_{lin})$ and  by $\pi$ the projection towards $(Z,D_Z)$. Assume that:
\begin{itemize}
\item[(i)] The type $\pi(q)$ (which is the generic type of $(Z,D_Z)$) is orthogonal to the constants,
\item[(ii)] $(q,\pi)$ is not uniformly relatively almost internal to the constants.
\end{itemize}
Then the generic type of $(X,D)$ is orthogonal to the constants.
\end{Theorem}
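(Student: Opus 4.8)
The plan is to argue by contradiction: suppose the generic type $r$ of $(X,D)$ is \emph{not} orthogonal to the constants, and derive that $(q,\pi)$ must be uniformly relatively almost internal, contradicting (ii). The first step is to set up the right objects. Let $z$ realize the generic type of $(Z,D_Z)$ and let $x$ realize the generic type of $(X,D)$ with $z$ in its differential closure — more precisely, recall that $Z$ is a hypersurface in $X$ cut out by a generator $t$, so the ``normal direction'' is one-dimensional, and by Lemma \ref{presentation-nomarlbundle} the generic fibre $(N_{X/Z},D_{lin})_z$ is the one-dimensional $D$-module over $K(z)$ with class $-h(z)+\mathrm{dlog}(K(z))$, where $D(t)=ht$. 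A realization $w$ of $q_z$ is thus (the class of) a nonzero solution of $w'=-h(z)w$ over $K(z)$. So what I really want to show is: if the ambient generic type $r$ is nonorthogonal to $\mathcal C$, then already over $K(z)$ the equation $w'=-h(z)w$ has a solution in $K(z)$ up to an additive constant shift of $h(z)$ by an element of $\mathbb C$ — i.e. the fibre becomes uniformly internal — and this is exactly condition (iii)-type behavior giving almost triviality via Proposition \ref{equivalence generale}.

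The heart of the argument is to relate nonorthogonality of $r=\tp(x/K)$ to the constants with a solution of the fibre equation living in $K(z)$. Since $(X,D)$ is generically smooth and $Z$ is an invariant hypersurface through which $x$ specializes, the local structure near $Z$ is: $X$ looks (formally or étale-locally) like $Z$ times the normal line, with the derivation $D$ restricting to $D_Z$ on the base and acting on the normal coordinate $t$ by $D(t)=ht$ plus higher-order terms in $t$. The key observation is that if $r$ is nonorthogonal to $\mathcal C$, then by Fact 3.13 (nonorthogonality to $\mathcal P$ equals non-weak-orthogonality to $\mathcal P_{int}$) there is $c\in\dcl(x)\setminus\acl(\emptyset)$ with $\tp(c)\in\mathcal C_{int}$; after working over $K(z)$ (note $\tp(x/K)$ orthogonal to $\mathcal C$ would be the \emph{conclusion}, so we cannot assume it — instead we use that $\pi(q)=\tp(z/K)$ IS orthogonal to the constants, hypothesis (i)), the constant-internal information in $x$ must come from the normal direction, since the base $z$ contributes nothing internal to $\mathcal C$. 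This forces the fibre type $q_z$ over $K(z)$ to be nonorthogonal to $\mathcal C$; as $q_z$ has $U$-rank one (it is a one-dimensional linear $D$-module, hence minimal or trivial over $K(z)$), the ``moreover'' clause of Proposition \ref{equivalence generale} applied to the pair $(q,\pi)$ — whose base $\pi(q)$ is orthogonal to $\mathcal C$ and whose fibres are $U$-rank one — shows that nonorthogonality of $q$ to $\mathcal C$ is equivalent to $(q,\pi)$ being uniformly relatively almost internal. So the remaining task is precisely to deduce nonorthogonality of $q$ (the generic type of the linearization) from nonorthogonality of $r$ (the generic type of $X$).

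For that last deduction, I would use the geometry of the linearization map. There is a dominant rational map from a neighborhood of $Z$ in $X$ to the total space $N_{X/Z}$ of the normal bundle — concretely, blow up along $Z$ or pass to the associated graded / deformation to the normal cone — which is a morphism of $D$-varieties taking $(X,D)$ generically to $(N_{X/Z},D_{lin})$, at least after the right birational modification; in the affine hypersurface picture this is just the map sending the normal coordinate to its leading linear part, compatible with $D(t)=ht+O(t^2)$ reducing to $D_{lin}(\bar t)=\overline{D(t)}=h\bar t$ by Proposition \ref{normalbundle}(3). Pushing forward the generic type of $(X,D)$ along this $D$-variety morphism yields (an extension of) the generic type of $(N_{X/Z},D_{lin})$, i.e. of $q$. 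Nonorthogonality to $\mathcal C$ is preserved under pushforward along $\emptyset$-definable (here $K$-definable) maps between the generic types of domain and image up to the usual care about which type we land on — more carefully, $q$ is in the definable closure of $r$ together with the base, so if $r$ forks with a constant tuple over $K$ then so does the relevant extension of $q$, giving nonorthogonality of $q$ to $\mathcal C$. Combined with the previous paragraph this yields uniform relative almost internality of $(q,\pi)$, contradicting (ii), and completing the proof.

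\medskip
\noindent\textbf{Main obstacle.} The delicate point is making the passage from ``$(X,D)$ nonorthogonal to $\mathcal C$'' to ``$(N_{X/Z},D_{lin})$ nonorthogonal to $\mathcal C$'' fully rigorous: one must produce the $D$-variety morphism from a birational model of $X$ near $Z$ onto $N_{X/Z}$ (deformation to the normal cone, carrying the derivation along), check that the generic type of the linearization is genuinely in the definable closure of the generic type of $X$ over the base, and verify that nonorthogonality transfers — this requires care because nonorthogonality is not literally preserved under arbitrary definable maps, only under ones that don't collapse the ``constant part'', which is guaranteed here precisely by hypothesis (i) ensuring the base $(Z,D_Z)$ is orthogonal to $\mathcal C$ so that all constant-internal content is normal-directional. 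The rest (Lemma \ref{presentation-nomarlbundle}, the $U$-rank one clause of Proposition \ref{equivalence generale}, Fact 3.13) is essentially bookkeeping.
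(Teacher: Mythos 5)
Your overall strategy (contrapositive: nonorthogonality of the generic type of $(X,D)$ to the constants forces uniform relative almost internality of $(q,\pi)$) matches the paper's, and your use of the $U$-rank-one clause of Proposition \ref{equivalence generale} for the fibres of the linearization is legitimate. But the step you yourself flag as the ``main obstacle'' is not a technicality to be made rigorous --- it is a genuine gap, because the map you propose to use does not exist. Deformation to the normal cone exhibits $N_{X/Z}$ as a \emph{degeneration} of $X$ (the special fibre of a family over $\mathbb{A}^1$ whose general fibres are copies of $X$), not as a dominant rational image of $X$: there is no $D$-variety morphism from (a birational model of) $X$ onto $N_{X/Z}$, and the generic type of $N_{X/Z}$ is not in the definable closure of the generic type of $X$. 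Indeed a generic point of $N_{X/Z}$ lies over a generic point of $Z$, which is a proper specialization of --- not definable from --- a generic point of $X$. So ``pushing forward the generic type of $(X,D)$'' to the linearization has no meaning, and nonorthogonality cannot be transferred by a definable-map argument of the kind you describe.

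What the paper does instead is to replace the nonexistent map on points by a specialization on function fields: the discrete valuation $v=v_Z$ on $K(X)$ attached to the invariant hypersurface $Z$, whose residue map $\mathcal{O}_{v}\to K(Z)$ is a morphism of differential rings (Claim \ref{compatibilityvaluationderivaiton}). Starting from a witness of non-weak-orthogonality --- a rational first integral $f\in K(X)\setminus K^{\delta}$ with $D(f)=0$, obtained after extending parameters --- one uses hypothesis (i) to normalize $f$ so that $m=v(f)>0$, writes $f=t^{m}k$ with $v(k)=0$ and $t$ a local equation of $Z$ satisfying $D(t)=ht$, and computes that $\mathrm{dlog}(k)$ equals a nonzero integer multiple of $h$. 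Taking residues then produces a solution in $K(Z)$ of (an integer power of) the linearized fibre equation, which by the explicit criterion of Theorem \ref{classification-logarithmicderivative} together with Lemma \ref{almost} is exactly uniform relative almost internality of $(q,\pi)$, contradicting (ii). This valuation-theoretic reduction is the essential content of the theorem and is absent from your proposal; without it, or an equivalent substitute, the argument does not go through.
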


The main tool in the proof of Theorem \ref{Theorem-normal bundle} is the discrete valuation associated to the irreducible hypersurface $Z$. We first recall its differential properties: consider $(X,D)$ a smooth irreducible algebraic variety over $(K,\delta)$ and $Z$ an invariant irreducible hypersurface of $(X,D)$. Denote by
$$v= v_Z: K(X) \setminus \lbrace 0 \rbrace \rightarrow \mathbb{Z}$$
the discrete valuation associated to $Z$. Note that
\begin{itemize}
\item the valuation ring $\mathcal O_{v}$ is the ring of rational functions $f \in K(X)$ which are well-defined at the generic point of $Z$ (i.e when $X$ is affine, the localisation of $K[X]$ at all functions which do not vanish on $Z$).
\item the maximal ideal $m_v$ is the subset of rational functions in $\mathcal O_{v}$ which are identically zero on $Z$.
\item the residue field $\mathcal O_Z/m_Z \simeq K(Z)$ is the field of rational functions on $Z$ and the residue map coincide with the restriction
$$ res: \begin{cases} \mathcal O_{v} \rightarrow K(Z) \\
f \mapsto f_{|Z} = i^\sharp(f) \end{cases}$$
where $i : Z \rightarrow X$ is the closed immersion defining $Z$.
\end{itemize}

Since derivations extend uniquely to ring  localizations, the derivation $D$ on $K[X]$ extends uniquely to a derivation (still denoted $D$) on $\mathcal O_{v}$.

\begin{Claim}\label{compatibilityvaluationderivaiton} With the notation above, the residue map $\mathrm{res}: (\mathcal O_v,D) \rightarrow (K(Z),D_Z)$ is a morphism of differential rings.
\end{Claim}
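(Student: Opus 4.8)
The plan is to verify the claim by a direct local computation, reducing it to the case where $X$ is affine and checking the two compatibility conditions that a morphism of differential rings requires: that the residue map $\mathrm{res}$ is a ring homomorphism (which is standard and already recalled above) and that it intertwines the two derivations, i.e. $\mathrm{res}(D(f)) = D_Z(\mathrm{res}(f))$ for every $f \in \mathcal{O}_v$. Since everything is local at the generic point of $Z$, I would work with the local ring $\mathcal{O}_v$ and its maximal ideal $m_v$, which by the discussion preceding the claim is exactly the set of $f \in \mathcal{O}_v$ vanishing on $Z$.

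First I would note that $D$ extends uniquely from $K[X]$ (or the relevant coordinate ring) to $\mathcal{O}_v$ by the universal property of localization, and that invariance of $Z$ means precisely $D(m_v) \subseteq m_v$: indeed $m_v = \mathcal{I}_Z \cdot \mathcal{O}_v$ is generated by the local equation $t$ of $Z$, and $D(t) = ht$ (as in Lemma \ref{presentation-nomarlbundle}) with $h \in \mathcal{O}_v$, so $D$ preserves the maximal ideal. Consequently $D$ descends to a well-defined derivation on the quotient $\mathcal{O}_v/m_v \simeq K(Z)$, and by the uniqueness part of this construction (derivations on quotients by invariant ideals) that induced derivation must coincide with $D_Z$, the derivation induced by $D$ on $K(Z)$ as defined in the preliminaries. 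The statement $\mathrm{res}(D(f)) = D_Z(\mathrm{res}(f))$ is then just the tautology that passing to the quotient commutes with the descended derivation.

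The one genuinely small point to be careful about is that $D_Z$ as used in the rest of the paper is defined as the derivation induced by $D$ on $\mathcal{O}_Z = i^\ast \mathcal{O}_X / \mathcal{I}_Z$ (a global/sheaf-theoretic object), whereas here we are inducing a derivation on the residue field $\mathcal{O}_v/m_v$ of a local ring; so I would spend a sentence observing that these two constructions agree, which follows because localization is exact and commutes with taking the quotient by $\mathcal{I}_Z$, so $\mathcal{O}_v/m_v$ is canonically the localization of $\mathcal{O}_Z$ at the generic point of $Z$, i.e. $K(Z)$, compatibly with the derivations. This identification is the only "obstacle," and it is really just bookkeeping: once it is in place, the claim is immediate from $D(m_v) \subseteq m_v$ and the definition of $\mathrm{res}$. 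No serious computation is needed — this is why the authors state it as a claim rather than a lemma.
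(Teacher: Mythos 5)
Your proof is correct and is essentially the paper's own argument: both hinge on the fact that invariance of $Z$ makes $D$ preserve the ideal of functions vanishing on $Z$ (the paper writes $\mathcal O_v=\bigcup_{U}K[U]$ and observes that each restriction $K[U]\to K[U\cap Z]$ is a differential ring morphism, while you phrase the same point as $D(m_v)\subseteq m_v$ plus the compatibility of quotient and localization). No gap; the identification of the descended derivation with $D_Z$ is handled the same way in both versions.
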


\begin{proof}
Indeed, we can write $\mathcal O_v = \bigcup_{U \in \mathcal J_Z} K[U]$ where $\mathcal J_Z$ is the set of open sets of $X$ containing the generic point of $Z$. On each of the open sets $U \in \mathcal J_Z$, the residue map coincide with the restriction map:

$$\mathrm{res}_U: \begin{cases}
 (K[U],D) \rightarrow K[U \cap Z] \subset (K(Z),D_Z)\\
 f \mapsto f_{|Z}
 \end{cases}$$ 
 
Since $U \cap Z$ is a closed invariant subvariety of $(U,D)$, the morphism $\mathrm{res}_U$ is a morphism of differential rings.
\end{proof} 

\begin{proof}[Proof of Theorem \ref{Theorem-normal bundle}] Let $(X,D)$ be a smooth $D$-variety over $(K,\delta)$ and  let $Z$  be an invariant hypersurface of $(X,D)$. Up to restricting $X$, we can always assume that $X$ is affine and that $Z$ is smooth. We consider $t$  a generator of the ideal defining $Z$ and denote by $v$ the valuation $v_Z$ associated to $Z$. Note that $t$ is a uniformizer for the discrete valuation $v$.

Now using that $Z$ is invariant, there exists a function $h \in \mathcal O_v$ such that  $D(t) = ht$. We write $\mathrm{res}(h) = h_Z \in K(Z)$. By Lemma \ref{presentation-nomarlbundle},  the fibre over a generic point of $(Z,D_Z)$ of the first order linearization is gauge equivalent to $\mathrm{dlog}(y) = -h_Z.$ So by assumption,  $(q,\pi)$ is not uniformly relatively almost internal where $q(x,y) \in S(K)$ the generic type of $$ (\ast): \begin{cases} \mathrm{dlog}(y) =  -h_{Z} \\ \overline{x} \in (Z,D_Z)^\delta \end{cases} \text{ and by } \pi \text{ the projection on }  (Z,D_Z)^\delta.$$ 
For the sake of a contradiction, assume that the generic type of $(X,D)$ is not orthogonal to the constants. After an extension of the parameters, we can assume that $(X,D)$ is not weakly orthogonal to the constants. Therefore, there exists $f \in K(X) \setminus K^\delta$ with $D(f) = 0$.  Up to replacing $f$ by $1/f$, we can assume that $v(f) \geq 0$.

\begin{itemize}
\item If $v(f) = 0$, then $\mathrm{res}(f) = f_{Z}$ is not identically zero on $Z$. Since  $(Z,D_Z)$ is orthogonal to the constants, it follows that $f_Z = c$ for some $c \in K^\delta$. By replacing $f$ by $f - c$, we can therefore assume that $v(f) > 0$.

\item If $m = v(f) > 0$ then write $f = t^m k$ with $v(k) = 0$. Then 
$$ \mathrm{dlog}(k) = \mathrm{dlog}(f) - m \mathrm{dlog}(t) =  mh.$$
\end{itemize}
Using Claim \ref{compatibilityvaluationderivaiton}, we obtain that  $\mathrm{dlog}(res(k)) = m h_Z$ which implies that the equation  $\mathrm{dlog}(y) =  m h_Z$ has a solution in the residue field $(K(Z),D_Z)$ . This shows that for the generic type $q_m$ of 

$$ \begin{cases} \mathrm{dlog}(y) =  mh_{Z} \\ \overline{x} \in (Z,D_Z)^\delta \end{cases}$$ 
is not uniformly relatively internal to the constants and therefore that $(q,\pi)$ is not uniformly relatively almost internal to the constants (see Lemma \ref{almost})
\end{proof}

\subsection{Application to autonomous differential equations} Putting together Theorem \ref{Theorem-normal bundle} and the classification of uniformly relatively internal types given by Corollary \ref{explicituniforminternality}, we get the following criterion to check orthogonality for planar algebraic vector field with an invariant line:

\begin{Corollary}
Let $v(x,y) = f(x,y) \frac \partial {\partial x} + yg(x,y) \frac  {\partial} {\partial y}$ be a complex planar polynomial vector field with $f,g \in \mathbb{C}[x,y]$.  The line $y = 0$ is invariant under the vector field $v$. Assume that:
\begin{itemize}
\item[(i)] The polynomial $f(x,0)$ is not identically zero and the rational function  $\frac 1 {f(x,0)} \in \mathbb{C}(x)$ has at least a simple pole and a multiple pole or that $\frac 1 {f(x,0)}$ has only simple poles but that the quotient of the residues at two of these poles is not rational.

\item[(ii)] There does not exist $\beta \in \mathbb{C}$ such that $ \frac {g(x,0) - \beta} {f(x,0)} \in \mathbb{C}(x)$ has only simple poles with rational residues.
\end{itemize}
Then the generic type of $(\mathbb{A}^2,v)$ is orthogonal to the constants.
\end{Corollary}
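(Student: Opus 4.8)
The plan is to obtain the statement as a direct combination of Theorem~\ref{Theorem-normal bundle} with the classification in Corollary~\ref{explicituniforminternality}; essentially all the content is already in those two results, and what remains is to check that the hypotheses line up. First I would set $X=\mathbb{A}^2$ and let $D$ be the derivation determined by $D(x)=f(x,y)$ and $D(y)=yg(x,y)$, so that $(X,D)=(\mathbb{A}^2,v)$. Since $D(y)=yg(x,y)$ lies in the ideal $(y)$, the line $Z=\{y=0\}\cong\mathbb{A}^1$ is a smooth closed invariant hypersurface of $(X,D)$, with induced derivation $D_Z(x)=f(x,0)$. By the planar Example of Section~6.1 (the computation of $v_{lin}$ following Lemma~\ref{presentation-nomarlbundle}), the first-order linearization $(N_{X/Z},D_{lin})$ is the $D$-variety on $\mathbb{A}^2$ attached to the system $x'=f(x,0)$, $y'=y\,g(x,0)$ --- that is, an instance of the family $(E)$ of Section~5.1, with (polynomial, hence rational) functions $f(x,0),g(x,0)\in\mathbb{C}(x)$. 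Write $q$ for its generic type and $\pi$ for the projection on $x$; then $(q,\pi)$ is relatively internal to the constants and $\pi(q)$ is the generic type of $x'=f(x,0)$.

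Next I would match hypotheses with Corollary~\ref{explicituniforminternality} applied to the pair $f(x,0),g(x,0)$. Hypothesis (i) above is literally the standing hypothesis on $1/f(x,0)$ in that Corollary, and --- as recalled in its proof, via \cite{Rosenlicht} and \citep[Example 2.20]{Hrushovski-Itai} --- it says precisely that $\pi(q)$ is orthogonal to the constants. Hypothesis (ii) above is exactly the negation of condition (iii) of Corollary~\ref{explicituniforminternality} (that dichotomy is insensitive to the sign of $g(x,0)$ and of $f(x,0)$, so no care about the sign convention in the linearization is needed). Consequently Corollary~\ref{explicituniforminternality} forces condition (ii) of that Corollary to fail as well: $(q,\pi)$ is \emph{not} uniformly relatively almost internal to the constants.

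Finally I would invoke Theorem~\ref{Theorem-normal bundle} for the smooth closed invariant hypersurface $Z$ of $(X,D)$: hypothesis (i) of the Theorem holds because $\pi(q)$, the generic type of $(Z,D_Z)$, is orthogonal to the constants, and hypothesis (ii) holds because $(q,\pi)$ is not uniformly relatively almost internal to the constants. The Theorem then yields that the generic type of $(X,D)=(\mathbb{A}^2,v)$ is orthogonal to the constants, which is the assertion. I do not expect a genuine obstacle: the content lies entirely in Theorem~\ref{Theorem-normal bundle} and Corollary~\ref{explicituniforminternality}, and the single point requiring attention is the identification of the generic fibre of the linearization with the logarithmic-derivative family $\mathcal D^{log}_g$ of Section~5.1 (rather than the additive family $\mathcal D^\delta_g$ of Section~5.2), which is precisely what the planar Example cited above provides.
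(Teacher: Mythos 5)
Your proposal is correct and follows exactly the route the paper intends: the paper offers no explicit proof of this corollary beyond the phrase ``putting together Theorem~\ref{Theorem-normal bundle} and Corollary~\ref{explicituniforminternality}'', and you supply precisely the missing details (identification of the linearization along $y=0$ with the logarithmic-derivative family, matching of hypothesis (i) with orthogonality of the base and of hypothesis (ii) with the negation of uniform relative almost internality). Your remark that the sign ambiguity between the normal and conormal bundle is harmless here is a worthwhile observation, since the paper's Example and Lemma~\ref{presentation-nomarlbundle} are not obviously consistent on this point.
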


\begin{Example}
For every choice of polynomials $f_1(x), \ldots, f_n(x),g_2(x), \ldots, g_n(x) \in \mathbb{C}[x]$, the generic type of the differential equation:
$$ \begin{cases}
x' = x^3(x - 1) + f_1(x)y+ \ldots  + f_n(x)y^n \\
y' = xy + + g_2(x)y+ \ldots  + g_m(x)y^m
\end{cases}$$
is orthogonal to the constants.
\end{Example}

\section{The Kolchin tangent bundle and uniform relative internality}

As remarked in the introduction, \cite{C-H-T-M} proves that differential jet bundles, in particular differential tangent bundles preserve internality to the constants.  So one can ask also whether different tangent bundles are uniformly (almost) relatively internal to the constants. We first give some positive results and then show, by giving counterexamples, that this is not true in general, as could be expected. 
%In this counterexample, the base is  an order $1$ set (over the constants), so this base better be orthogonal to the constants, as otherwise \cite{C-H-T-M} says that the total space (or has generic type) almost internal to the constants. 

We work in a (saturated) differentially closed field $\mathcal U$ as usual, where $(K,\delta)$ is a differential subfield.  Kolchin defined tangent spaces (and hence bundles) of arbitrary differential algebraic varieties ($DAV$'s) (over $K$). In \cite{Pillay-Ziegler} it was pointed out how these can obtained in the case where the $DAV$ is of the form $(X,D)^{\delta}$ where $(X,D)$ is an algebraic $D$-variety over $K$.  This is summarized below. 

\begin{Lemma}\label{presentation-tangentbundle}
Let $(X,D)$ be a smooth $D$-variety over $(K,\delta)$. There exists a unique structure of $D$-variety $D^1$ on $T_{X/K}$ such that:
\begin{itemize}
\item[(1)] The canonical projection $\pi: (T_{X/K},D^1) \rightarrow (X,D)$ is a morphism of $D$-varieties over $(K,\delta)$.
\item[(2)] If $z \in (X,D)^\delta$, the fibre $(T_{X/K},D_1)^{\delta}_z$ is a linear differential equation over the differential field $K(z)$.
\item[(3)] If $f \in \mathcal O_X(U)$, denote by $\overline{f} \in \mathcal O_{T_{X/K}}(\pi^{-1}(U))$ the function defined by $df$ on $\pi^{-1}(U)$. For every $f \in \mathcal O_X(U)$,
$$ D^1(\overline{f}) =  \overline{D(f)}.$$
\item[(4)] If $f : (X,D_X) \rightarrow (Y,D_Y)$ is a morphism of $D$-varieties over $(K,\delta)$, there is a morphism $d^1f$ of $D$-varieties making the following diagram commute:

\begin{tikzcd}
(T_{X/K},D^1_X) \arrow[d, "\pi"] \arrow[r,"d^1f"] & (T_{Y/K},D^1_Y) \arrow[d, "\pi"]\\
(X,D_X) \arrow[r,"f"] & (Y,D_Y)
\end{tikzcd}

\noindent making $(X,D) \rightarrow (T_{X/K},D_1)$ into a product-preserving covariant functor on the category of $D$-varieties. 

\item[(5)] If $f : (X,D_X) \rightarrow (Y,D_Y)$ is a morphism of $D$-varieties over $(K,\delta)$, then for any $b \in X$ generic, the map $d^1f_b : T_{X/K,b} \rightarrow T_{Y/K,f(b)}$ is linear. Moreover, if $f$ is generically finite-to-one, then $d^1f_b$ is an isomorphism. 

\end{itemize}
\end{Lemma}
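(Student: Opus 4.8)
The plan is to build $D^1$ exactly as in the linearization construction of Proposition \ref{normalbundle}, replacing the conormal sheaf $\mathcal{I}/\mathcal{I}^2$ by the full sheaf of K\"ahler differentials $\Omega^1_{X/K}$. The starting observation is that, since $D\colon\mathcal{O}_X\to\mathcal{O}_X$ is a derivation lifting $\delta$, the \emph{Lie derivative} $\mathcal{L}_D$ along $D$ endows $\Omega^1_{X/K}$ with a canonical structure of sheaf of $D$-modules over $(\mathcal{O}_X,D)$: locally one sets $\mathcal{L}_D(a\,db)=D(a)\,db+a\,d(D(b))$ for $a,b\in\mathcal{O}_X(U)$, and the Leibniz identity for $D$ together with $d(bc)=b\,dc+c\,db$ shows this is well defined and satisfies $\mathcal{L}_D(a\omega)=D(a)\omega+a\,\mathcal{L}_D(\omega)$. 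Since $T_{X/K}$ is the relative spectrum over $X$ of $\mathrm{Sym}^\bullet_{\mathcal{O}_X}\Omega^1_{X/K}$, the pair $(\mathcal{O}_X,D)$ together with $(\Omega^1_{X/K},\mathcal{L}_D)$ extends uniquely, by the Leibniz rule on the symmetric algebra, to a derivation $D^1$ on $\mathcal{O}_{T_{X/K}}$; all the defining formulas are natural in $U$, so these local derivations glue to a global $D^1$. By construction $D^1$ restricts to $D$ on $\mathcal{O}_X\subset\mathcal{O}_{T_{X/K}}$, which is (1), and $D^1(\overline{f})=\mathcal{L}_D(df)=\overline{D(f)}$, which is (3). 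Conversely, any derivation on $\mathcal{O}_{T_{X/K}}$ for which $\pi$ is a morphism of $D$-varieties and which satisfies (3) must agree with $D^1$ on the local generators $\mathcal{O}_X$ and $\overline{f}$, $f\in\mathcal{O}_X(U)$, hence everywhere; this gives uniqueness.

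For (2), cover $X$ by affine opens $U$ on which $\Omega^1_{X/K}(U)$ is free with basis $df_1,\dots,df_n$, $f_i\in\mathcal{O}_X(U)$; then $\overline{f_1},\dots,\overline{f_n}$ are fibre coordinates on $\pi^{-1}(U)$, and writing $d(D(f_i))$ in this basis gives $D^1(\overline{f_i})=\overline{D(f_i)}=\sum_j a_{ij}\,\overline{f_j}$ with $a_{ij}\in\mathcal{O}_X(U)$. If $z\in(X,D)^\delta$, the fibre $T_{X/K,z}\cong\mathbb{A}^n_{K(z)}$ carries $D^1$ as a derivation extending $\delta$ on the differential field $K(z)$, and the displayed formula exhibits it as the linear differential equation $Y'=A(z)Y$, $A(z)=(a_{ij}(z))$, that is, as a (homogeneous) linear $D$-module over $K(z)$.

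Clauses (4) and (5) are naturality statements. For a morphism of $D$-varieties $f\colon(X,D_X)\to(Y,D_Y)$, the pullback of differentials $f^\ast\Omega^1_{Y/K}\to\Omega^1_{X/K}$ intertwines $\mathcal{L}_{D_Y}$ and $\mathcal{L}_{D_X}$: on an exact form this is the identity $f^\sharp(D_Y(g))=D_X(f^\sharp(g))$, which is precisely the condition that $f$ be a morphism of $D$-varieties, and it propagates to arbitrary $1$-forms because the pullback is a module map over $f^\sharp$ and $\mathcal{L}$ is a derivation of the module. Passing to symmetric algebras yields $d^1f\colon(T_{X/K},D^1_X)\to(T_{Y/K},D^1_Y)$ lying over $f$ and commuting with the derivations; since the tangent bundle and all the above constructions commute with products, $(X,D)\mapsto(T_{X/K},D^1)$ is a product-preserving covariant functor, giving (4). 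The map $d^1f_b$ is linear because it is induced by the sheaf morphism $df$, which is the first assertion of (5); and if $f$ is generically finite-to-one then, as we are in characteristic $0$, $f$ is generically \'etale, so $df_b$ is a vector space isomorphism for $b$ generic, and being compatible with $D^1_X$ and $D^1_Y$ it is then an isomorphism of the corresponding linear differential equations.

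The step I expect to be the only real work is the first one: verifying that the Lie-derivative formula genuinely defines a sheaf endomorphism $\mathcal{L}_D$ of $\Omega^1_{X/K}$ --- that $\mathcal{L}_D(a\,db)$ is independent of the presentation of a $1$-form as a sum of such terms, and that the local definitions are compatible with restriction. This is the usual Cartan-type bookkeeping, and it is, up to replacing $\Omega^1_{X/K}$ by $\mathcal{I}/\mathcal{I}^2$, essentially the same computation already performed in the proof of Proposition \ref{normalbundle}; once it is in place, clauses (2)--(5) are formal.
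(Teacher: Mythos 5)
Your proof is correct, and it follows exactly the route the paper intends: the paper states this lemma without proof (deferring to the Pillay--Ziegler reference), but your construction --- the Lie derivative $\mathcal L_D$ on $\Omega^1_{X/K}$ extended by Leibniz to $\mathrm{Sym}^\bullet\Omega^1_{X/K}=\mathcal O_{T_{X/K}}$ --- is precisely the argument the paper gives for the parallel Proposition \ref{normalbundle}, with $\Omega^1_{X/K}$ in place of $i^\ast(\mathcal I/\mathcal I^2)$. The verifications of (2)--(5) you supply are the standard ones and are sound.
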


Let $(X,D)$ be a smooth $D$-variety over $(K,\delta)$.  The $D$-variety $(T_{X/K},D^1)$ is called the Kolchin-tangent bundle of $(X,D)$. Assume that $X$ is irreducible. Then $T_{X/K}$ is irreducible and we denote by $q_{(X,D)}$ the generic type of $(T_{X/K},D^1)^{\delta}$. By Lemma \ref{presentation-tangentbundle}, $(q_{(X,D)},\pi)$ is relatively internal to the constants and $\pi(q_{(X,D)})$ is the generic type of $(X,D)^{\delta}$. We consider the property:
$$ (\ast): (q_{(X,D)},\pi)  \text{ is uniformly relatively almost internal to the constants.}$$

\begin{Proposition}\label{uniform-internality-of-tangent}
 Let $(X,D)$ be a smooth and irreducible $D$-variety over $(K,\delta)$. 
\begin{itemize}
\item[(i)] If the generic type of $(X,D)^{\delta}$ is internal to the constants then $(X,D)$ satisfies $(\ast)$.
\item[(ii)] If $\mathrm{dim}(X) = 1$ and $(X,D)$ is autonomous then $(X,D)$ satisfies $(\ast)$.
\item[(iii)] If $(X_1,D_1)$ and $(X_2,D_2)$ are two $D$-varieties satisfying $(\ast)$ then $(X_1,D_1) \times (X_2,D_2)$ also satisfies $(\ast)$.
\item[(iv)] If $(X_1,D_1)$ and  $(X_2,D_2)$ are two $D$-varieties in generically finite to finite correspondence then $(X_1,D_1)$ satisfies $(\ast)$ if and only if $(X_2,D_2)$ satisfies $(\ast)$ 
\item[(v)] If $(X,D)$ is a $D$-group, then $(X,D)$ satisfies $(\ast)$.
\end{itemize}

\begin{proof}

(iii) and (iv): applications of Lemma \ref{presentation-tangentbundle} (4).

(i): By results of Chatzidakis-Moosa-Trainor, if the generic type of $(X,D)^{\delta}$ is internal to the constants, then so is the generic type of $(T_{X/K},D^1_X)^{\delta}$. Thus $(X,D)$ satisfies $(\ast)$.

(ii): So by assumption $X$ is over a field of constants. There are two cases. First, if $D = 0$ then  constants, then (i) yields that $X$ satisfies $(\ast)$.

Otherwise, recall that by construction, if $(X,D)$ is an autonomous, irreducible and smooth $D$-variety, there is a morphism of $D$-varieties $s : (X,D) \rightarrow (T_{X/K},D^1_X)$ such that $\pi \circ s = \mathrm{id}$. As $\mathrm{dim}(X) = 1$, the Kolchin tangent space $(T_{X/K},D_X^1)^{\delta}_a$ over any generic $b \in X$ is a one-dimensional $\C$-vector space. As $D$ is not $0$ the section $s$ is not trivial at a generic point $b$, i.e. $s(b) \neq (b,0)$. It thus yields a basis of $(T_{X/K},D_X^1)_b$, uniformly in $b$, which is enough to obtain uniform internality to the constants.

(v): Let $\mu : X\times X \rightarrow X$ be the group operation. By Lemma \ref{presentation-tangentbundle}, there is a morphism of $D$-varieties $d^1\mu : (T_{X/K},D^1_X) \times (T_{X/K},D^1_X) \rightarrow (T_{X/K},D^1_X)$ such that for all $a_1,_2 \in (T_{X/K},D^1_X)$, we have $\pi(d^1\mu(a_1,a_2)) = \mu(\pi(a_1),\pi(a_2))$. Moreover, by functoriality and preservation of products, this is a group operation on $(T_{X/K},D^1_X)$. 

The section $s : (X,D) \rightarrow (T_{X/K},D^1_X)$ defined by $s(b) = (b,0)$ thus is a $D$-group embedding, and we will denote $\cdot$ the group operation on $(X,D)$ and $(T_{X/K},D^1_X)$. 

Fix some $b \in (X,D)^\delta$ and a $\mathcal{C}$-basis $a_1, \cdots, a_n$ of $(T_{X/K},D^1_X)^\delta_b$. We will show that for any $(c,a) \in (T_{X/K},D^1_X)^\delta$, we have $a \in \mathrm{dcl}(c,a_1 \cdots, a_n, b, \mathcal{C})$, which yield uniform internality to the constants. 

Observe that by Lemma \ref{presentation-tangentbundle}, multiplication by $c\cdot b^{-1}$ defines a linear isomorphism from $(T_{X/K},D^1_X)^\delta_b$ to $(T_{X/K},D^1_X)^\delta_c$. In particular, the tuple $(c \cdot b^{-1} \cdot a_1, \cdots , c \cdot b^{-1}\cdot a_n)$ is a $\mathcal{C}$-basis of $(T_{X/K},D^1_X)^\delta_c$, and is contained in $\mathrm{dcl}(b,c,a_1, \cdots a_n)$. Moreover, as it is a basis, we also have that $a \in \mathrm{dcl}(c \cdot b^{-1} \cdot a_1, \cdots , c \cdot b^{-1}\cdot a_n, \mathcal C)$. Thus we finally obtain $a \in \mathrm{dcl}(c,a_1 \cdots, a_n, b, \mathcal{C})$. 
\end{proof}

\end{Proposition}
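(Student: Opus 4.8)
The plan is to dispatch the five items more or less independently, using in each case the formal properties of the Kolchin tangent functor collected in Lemma \ref{presentation-tangentbundle} (functoriality, preservation of products, and the fact that $d^1f$ is a fibrewise linear isomorphism when $f$ is generically finite-to-one), together with the characterizations of uniform relative almost internality from Section 3. For (i) I would simply invoke the theorem of Chatzidakis--Harrison-Trainor--Moosa \cite{C-H-T-M}: if the generic type of $(X,D)^\delta$ is internal to $\mathcal C$ then so is the generic type of $(T_{X/K},D^1)^\delta$, i.e. $q_{(X,D)}$ is itself $\mathcal C$-internal; and any $\mathcal C$-internal stationary type is automatically uniformly relatively $\mathcal C$-internal with respect to any $\emptyset$-definable map (an internality datum $e$ for $q_{(X,D)}$ serves, since then $a\in\dcl(K,e,\mathcal C)\subseteq\dcl(K,\pi(a),e,\mathcal C)$, with no dependence on $\pi(a)$). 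For (ii), autonomy forces $(X,D)$ to be defined over the constants. If $D=0$ then $(X,D)^\delta=X(\mathcal C)$ has $\mathcal C$-internal generic type and (i) applies; if $D\neq 0$, the vector field defining $D$ is a morphism of $D$-varieties $s\colon(X,D)\to(T_{X/K},D^1)$ splitting $\pi$, and since $\dim X=1$ the fibre $(T_{X/K},D^1)^\delta_b$ is a one-dimensional $\mathcal C$-vector space for any $b\in(X,D)^\delta$, on which $s(b)\neq 0$; thus $s(b)\in\dcl(K,b)$ is a $\mathcal C$-basis, every point of the fibre over $b$ lies in $\dcl(K,b,\mathcal C)$, and $(q_{(X,D)},\pi)$ is in fact relatively $\mathcal C$-definable.

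For (iii) and (iv) the plan is routine forking bookkeeping via the characterization in Proposition \ref{unif-int-iff-Pint-alg}(3) (that $\tp(a/\pi(a))$ be $\mathcal C_{\mathrm{int}}$-algebraic). For a product, $(T_{X_1\times X_2},D^1)=(T_{X_1},D^1)\times(T_{X_2},D^1)$ by Lemma \ref{presentation-tangentbundle}(4), so a generic point of the fibre over $(b_1,b_2)$ is $(v_1,v_2)$ with $v_i$ generic in the fibre over $b_i$; if $v_i\in\acl(b_i,c_i)$ for $c_i$ a tuple from $\mathcal C_{\mathrm{int}}$, then $(v_1,v_2)\in\acl(b_1,b_2,c_1,c_2)$ and $(c_1,c_2)$ is again a tuple from $\mathcal C_{\mathrm{int}}$, giving $(\ast)$ for the product. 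For a generic finite-to-finite correspondence, realized by a $D$-variety $W$ with generically finite-to-one dominant $D$-morphisms $g_i\colon W\to X_i$, Lemma \ref{presentation-tangentbundle}(5) makes each $d^1g_i$ a fibrewise linear isomorphism; so a generic fibre point $a_i$ of $(T_{X_i},D^1)$ over $c_i$ pulls back to a generic fibre point $u$ of $(T_W,D^1)$ over some $w\in\acl(c_i)$ with $g_i(w)=c_i$, and $a_i$ and $u$ are interalgebraic over $w,c_i$. Since $\mathcal C_{\mathrm{int}}$-algebraicity of $\tp(u/w)$ transfers to $\tp(a_i/c_i)$ and conversely, one obtains $(\ast)$ for $X_1$ iff $(\ast)$ for $W$ iff $(\ast)$ for $X_2$.

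The main work is (v). First I would observe, purely from $d^1$ being a product-preserving functor (Lemma \ref{presentation-tangentbundle}(4)), that $d^1\mu$ makes $(T_{X/K},D^1)$ a $D$-group and that the zero section $s\colon b\mapsto(b,0)$ is a $D$-group embedding: it is a group homomorphism because the differential of $\mu$ at $(x,y)$ splits as the sum of the two translation differentials, which kill $0$, and it is a $D$-morphism because $D^1$ carries the ideal of the zero section into itself and restricts to $D$ there. Now fix any $b_0\in(X,D)^\delta$ and a $\mathcal C$-basis $a_1,\dots,a_n$ of the $\mathcal C$-vector space $(T_{X/K},D^1)^\delta_{b_0}$, and set $e=(b_0,a_1,\dots,a_n)$. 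Given any $c\in(X,D)^\delta$, left translation by $c\cdot b_0^{-1}$ in the $D$-group $(T_{X/K},D^1)$ restricts on the fibre over $b_0$ to the differential $d^1(L_{c b_0^{-1}})_{b_0}$, a $\mathcal C$-linear isomorphism onto the fibre over $c$ by Lemma \ref{presentation-tangentbundle}(5); hence the tuple obtained by multiplying $(b_0,a_1),\dots,(b_0,a_n)$ by $(c b_0^{-1},0)$ is a $\mathcal C$-basis of $(T_{X/K},D^1)^\delta_c$ lying in $\dcl(K,c,b_0,a_1,\dots,a_n)$. Consequently every point $a$ of that fibre lies in $\dcl(K,c,e,\mathcal C)=\dcl(K,\pi(c,a),e,\mathcal C)$, which is exactly uniform relative $\mathcal C$-internality (indeed $\mathcal C_{\mathrm{int}}$-definability) of $(q_{(X,D)},\pi)$. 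The delicate points I expect to spend care on are verifying that the zero section really is a $D$-subgroup, so that the translation argument takes place inside $(T_{X/K},D^1)$ and returns genuine $\delta$-points, and handling the independence clause in Definition \ref{unif-int-definition}; the latter is automatic, since the displayed $\dcl$-containment holds at every fibre point and is uniform, so one may read it off at a generic point independent from $e$, and by Proposition \ref{unif-int-iff-Pint-alg} it is harmless that $e$ itself need not be $\mathcal C$-internal.
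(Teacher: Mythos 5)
Your proposal is correct and follows essentially the same route as the paper's proof: (i) via Chatzidakis--Harrison-Trainor--Moosa, (ii) via the vector field as a nonvanishing section of the one-dimensional tangent fibres, (iii)--(iv) via the functoriality and product/finite-correspondence behaviour of $d^1$ from Lemma \ref{presentation-tangentbundle}, and (v) via the translation of a fixed fibre basis by $c\cdot b^{-1}$ in the $D$-group structure on $(T_{X/K},D^1)$. The only difference is that you flesh out (iii) and (iv) (which the paper leaves as one-line applications of the lemma) and explicitly invoke part (5) of the lemma for the finite-to-finite case, which is indeed the part needed there.
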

\subsection{A counterexample}
\begin{Proposition}\label{counterexample} Consider the Kolchin-closed set defined by 
$$(E): \begin{cases} x' = x^3(x-1) \\
y' = xy + \frac {y^2} 2.
\end{cases} $$
Then the Kolchin tangent bundle of $(E)$ is not uniformly relatively almost internal to the constants.
\end{Proposition}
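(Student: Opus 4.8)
The plan is to compute the Kolchin tangent bundle of $(E)$ explicitly, isolate inside the generic fibre a rank-one logarithmic-derivative sub-equation over the base $(E)$, and then show that \emph{this} rank-one fibration is not uniformly relatively almost internal using the residue technique of Section~\ref{linearization}; the failure for the sub-fibration propagates to the whole tangent bundle because the Galois group (relative to $\mathcal C_{\mathrm{int}}$) of the generic fibre surjects onto that of a subquotient.

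First I would write the $D$-variety structure on $(E)$ as $D(x)=x^3(x-1)$, $D(y)=xy+\tfrac{y^2}{2}$, and apply Lemma~\ref{presentation-tangentbundle}(3): with tangent coordinates $(u,v)$ dual to $(dx,dy)$ one gets $D^1(u)=\overline{D(x)}=(4x^3-3x^2)u$ and $D^1(v)=\overline{D(y)}=yu+(x+y)v$, so the Kolchin tangent bundle is the $D$-variety on $\mathbb A^4$ defined by
$$x'=x^3(x-1),\quad y'=xy+\tfrac{y^2}{2},\quad u'=(4x^3-3x^2)u,\quad v'=yu+(x+y)v,$$
with $\pi$ the projection to the $(x,y)$-coordinates. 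Over a generic point $(a,b)$ of $(E)^\delta$ the fibre is the triangular rank-two linear equation $u'=(4a^3-3a^2)u$, $v'=bu+(a+b)v$ over $K:=\C\langle a,b\rangle$; its invariant sub-$D$-module $\{u=0\}$ is $v'=(a+b)v$, i.e. $\mathrm{dlog}(v)=a+b$, and the quotient $u'=(4a^3-3a^2)u$ is trivial (solution $u=a^3(a-1)=a'$). Note that $\{y=0\}$ is an invariant hypersurface of $(E)$, and that the generic type of $(E)$ is orthogonal to the constants by the Example at the end of Section~\ref{linearization} (it is a special case of that family). Since $\pi(q)$ is orthogonal to the constants, Proposition~\ref{equivalence generale} applies, and by functoriality of the Galois group relative to $\mathcal C_{\mathrm{int}}$ it is enough to show that the sub-$D$-variety $\{u=0\}$ of the tangent bundle, i.e. the fibration $\{x'=x^3(x-1),\ y'=xy+\tfrac{y^2}{2},\ v'=(x+y)v\}\to (E)$, is not uniformly relatively almost internal to the constants.

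For this I would invoke Theorem~\ref{classification-logarithmicderivative} together with Lemma~\ref{almost}: since $\pi(q)$ is orthogonal to the constants, uniform relative almost internality of this rank-one fibration is equivalent to the existence of $k\in\Z_{>0}$, $\alpha\in\C$ and $h$ in the differential field generated by $(x,y)$ and $\C$ with $k(x+y)=\alpha+\mathrm{dlog}(h)$. Now $D(y)=y(x+\tfrac y2)$ gives $\mathrm{dlog}(y)=x+\tfrac y2$, hence $k(x+y)=\mathrm{dlog}(y^{2k})-kx$, so such an identity would force $-kx=\alpha+\mathrm{dlog}(h')$ with $h'=h/y^{2k}\in\C(x,y)$. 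Writing $h'=y^m\tilde h$ with $\tilde h$ a unit along the invariant hypersurface $\{y=0\}$ and applying the residue morphism $\mathrm{res}\colon(\mathcal O_{v_{\{y=0\}}},\delta)\to(\C(x),\delta_x)$ of Claim~\ref{compatibilityvaluationderivaiton} to $-kx=\alpha+m(x+\tfrac y2)+\mathrm{dlog}(\tilde h)$ yields, in $\C(x)$,
$$\frac{(\mathrm{res}\,\tilde h)'_x}{\mathrm{res}\,\tilde h}=\frac{-(k+m)x-\alpha}{x^3(x-1)}.$$
If $(k+m,\alpha)\neq(0,0)$ the right-hand side has a pole of order $\ge 2$ at $x=0$, impossible for the logarithmic derivative of a rational function; so $k+m=0$ and $\alpha=0$, which reduces the problem to solving $\mathrm{dlog}(\tilde h)=\tfrac k2\,y$ with $\tilde h\in\C(x)(y)$ a unit along $\{y=0\}$. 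Factoring $\tilde h=c(x)\prod_i(y-r_i(x))^{n_i}$ over $\overline{\C(x)}$ and using that $\mathrm{dlog}(y-r)=x+\tfrac r2+\tfrac y2$ exactly when $y=r$ is an invariant curve, one checks that every $r_i$ with $n_i\neq0$ must satisfy $r'=xr+\tfrac{r^2}{2}$; since the generic type of $(E)$ is orthogonal to the constants there is no rational first integral, so the Riccati equation $y'=xy+\tfrac{y^2}{2}$ has only the solution $y=0$ among functions algebraic over $\C(x)$, forcing $\tilde h\in c(x)\,y^k\,\C(x)$, which contradicts $v_{\{y=0\}}(\tilde h)=0$. (The very same comparison of orders along $\{y=0\}$ also treats $k(x+y)=\alpha+\mathrm{dlog}(h)$ directly and shows the module $\mathrm{dlog}(v)=x+y$ is not even finite over $K$, i.e. has Galois group $\mathbb G_m$.) Hence the $\{u=0\}$ subfibration, and therefore the Kolchin tangent bundle of $(E)$, is not uniformly relatively almost internal to the constants.

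The main obstacle is the last step of the third paragraph: one must be sure that $(E)$ has no second invariant curve $y=\rho(x)$ with $\rho$ algebraic over $\C(x)$, since such a $\rho$ would trivialise the logarithmic derivative $x+y$ (one would have $x+y=\mathrm{dlog}\!\bigl(\tfrac{y(y-\rho)}{\rho}\bigr)$). Establishing this — equivalently, that $x^3(x-1)\,R'+xR=1$ has no rational solution $R\in\C(x)$ — is what makes the obstruction genuinely non-trivial, and is where the precise shape of the cubic $x^3(x-1)$ and of the fibre equation must be used; the rest of the argument is the bookkeeping with valuations and logarithmic derivatives indicated above.
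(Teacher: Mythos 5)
Your overall strategy is essentially the paper's: compute the Kolchin tangent bundle explicitly via Lemma~\ref{presentation-tangentbundle}(3), extract from its generic fibre the rank-one equation $\mathrm{dlog}(v)=x+y$ over $(E)$ (you do this via the invariant sub-module $\{u=0\}$, the paper via the frame $(v,\partial_y)$ and the splitting $q=q_{red}\otimes p_{\mathcal C}$ of Claim~\ref{counterexampleclaim1}), and then try to rule out an identity $k(x+y)=\alpha+\mathrm{dlog}(h)$ with $h\in\C(x,y)^{\ast}$ by means of the valuation along the invariant line $y=0$. Your reduction to the subfibration is legitimate (a generic point of the fibre together with the rational section $(u,v)=(x^4-x^3,\,xy+\tfrac{y^2}{2})$ spans the fibre over $\C$, so uniform relative almost internality passes to the sub), and the residue computation forcing $k+m=0$, $\alpha=0$ is correct. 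The gap is the step you yourself flag as ``the main obstacle'': you deduce from orthogonality of $(E)$ to the constants that the Riccati equation $y'=xy+\tfrac{y^2}{2}$ has no nonzero solution algebraic over $\C(x)$. Orthogonality to the constants excludes rational first integrals, not invariant algebraic curves (the line $y=0$ is already one), so this deduction is invalid, and the needed fact is left unproved.

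Worse, the needed fact is false. Put $\rho(x)=\frac{2x(x-1)}{2x^2-2x+1}$. Then $\frac{d\rho}{dx}=\frac{4x-2}{(2x^2-2x+1)^2}$ and one checks directly that $x^3(x-1)\frac{d\rho}{dx}=x\rho+\tfrac{\rho^2}{2}$; equivalently, $R(x)=\frac{2x^2-2x+1}{x(1-x)}=\frac1x-1+\frac{x}{1-x}$ is a rational solution of the linear equation $x^3(x-1)R'+xR=1$ that you hoped had none. So $y=\rho(x)$ is a second invariant algebraic curve, and, exactly as you anticipate in your closing paragraph, $\mathrm{dlog}\bigl(y(y-\rho)/\rho\bigr)=x+y$ and $\mathrm{dlog}\bigl(((y-\rho)/\rho)^2\bigr)=y$. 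By Theorem~\ref{classification-logarithmicderivative}(iii) the subfibration $\mathrm{dlog}(v)=x+y$ is therefore \emph{trivial} over $(E)$; in fact $(0,\,y(y-\rho)/\rho)$ and $(x^4-x^3,\,xy+\tfrac{y^2}{2})$ form a $\C$-basis of rational solutions of the generic fibre of the tangent bundle, which is thus relatively $\mathcal C$-definable. Your argument cannot be completed, and the same $\rho$ falsifies Claim~\ref{counterexampleclaim2} of the paper (take $k=1$, $c=0$, $h=((y-\rho)/\rho)^2$), so the paper's own proof breaks at exactly the same point and Proposition~\ref{counterexample} as stated appears to be false: a genuine counterexample would require an equation whose transverse cofactor is not trivialised by any hidden algebraic invariant curve.
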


Geometrically, $(E)$ is the differential equation associated with the planar vector field $v(x,y) = x^3(x-1)\frac \partial {\partial x} + (xy + y^2) \frac {\partial} {\partial y}$. The line $y = 0$ is invariant, the generic type of this line is orthogonal to the constants and the linearization of $(E)$ along $y = 0$ is the equation:
$$(E_{lin}): \begin{cases} x' = x^3(x-1) \\
y' = xy.
\end{cases}.$$
We have seen that $(q,\pi)$ is not uniformly relative almost internal and that therefore the generic type of $(E)$ is orthogonal to the constants.

Moreover, the projection $\pi$ of $(E)$ on $x' = x^3(x-1)$ defines a rational factor and therefore an invariant foliation on $(\mathbb{A}^2,v)$ which is the foliation generated by $w = \frac d {dy}$. A direct computation shows that:

$$ \mathcal L_v(w) = [v,w] = (x + y)w.$$ 

Consider the differential equation
$$(T_{red}): \begin{cases}
x' = x^3(x-1) \\
y' = xy + \frac {y^2} 2 \\
z' = (x + y)z
\end{cases}$$

\begin{Claim}\label{counterexampleclaim1}
With the notation above, denote by $q$ the generic type of the Kolchin tangent space of $(E)$ and by $q_{red}$ the generic type of $T_{red}$. Then:
$$q = q_{red} \otimes p_\mathcal C.$$
where $p_\mathcal C$ denotes the generic type of the field of constants.
\end{Claim}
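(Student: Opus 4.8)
The plan is to exhibit an explicit $\mathbb{C}$-definable bijection between (a realization of) the generic type of the Kolchin tangent bundle $(T_{\mathbb{A}^2/\mathbb{C}},D^1)^\delta$ of $(E)$ and (a realization of) $q_{red}\otimes p_{\mathcal C}$, and then check that it respects independence so that the equality of types is genuinely an equality of the free amalgamation $q_{red}\otimes p_{\mathcal C}$. Concretely, write a point of the tangent bundle as $(x,y,u,v)$, lying over $(x,y)\in(E)^\delta$, where $(u,v)$ is a tangent vector at $(x,y)$. Using Lemma \ref{presentation-tangentbundle}(3) and the defining equations $x'=x^3(x-1)$, $y'=xy+y^2/2$, one computes the induced derivation $D^1$ on the coordinates $u=\overline{dx}$, $v=\overline{dy}$: differentiating the relations gives $u' = (4x^3-3x^2)u$ and $v' = (x+y)v + yu$. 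So the tangent bundle equation is
$$
(T): \begin{cases} x' = x^3(x-1)\\ y' = xy + \tfrac{y^2}{2}\\ u' = (4x^3-3x^2)u\\ v' = (x+y)v + yu.\end{cases}
$$
The coordinate $z$ of $(T_{red})$ satisfies $z'=(x+y)z$, which is exactly the homogeneous equation governing $v$; the inhomogeneous term $yu$ in the $v$-equation, together with the equation for $u$, is what we must handle.

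First I would deal with the $u$-coordinate. The equation $u'=(4x^3-3x^2)u = (x^3(x-1))' u$ means $\mathrm{dlog}(u) = \mathrm{dlog}(x^3(x-1))$, so $u/(x^3(x-1))$ is a constant; call it $c_1$. Since $x\models\pi(q)$ is orthogonal to the constants, $x$ is independent from $\mathcal C$, and $c_1$ is a "free" constant: the map $(x,y,u,v)\mapsto(x,y,c_1,v)$ with $c_1 = u/(x^3(x-1))$ is a $\mathbb{C}$-definable bijection splitting off a copy of $p_{\mathcal C}$. Then I would substitute $u = c_1 x^3(x-1)$ into the $v$-equation to get $v' = (x+y)v + c_1\, y\, x^3(x-1)$, an inhomogeneous linear equation over the differential field generated by $(x,y,c_1)$ whose homogeneous part is the $z$-equation. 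The key point is that this inhomogeneous equation has a solution rational in $(x,y,c_1)$: one seeks $v_0 = c_1\, h(x,y)$ with $h$ a rational function solving $h' = (x+y)h + yx^3(x-1)$ — equivalently one checks (a routine computation, which I would perform but not belabor) that such an $h\in\mathbb{C}(x,y)$ exists, for instance of a simple polynomial form suggested by the degrees involved. Given $v_0$, the general solution is $v = v_0 + z$ where $z$ solves $z'=(x+y)z$; so $(x,y,u,v)\mapsto(x,y,z, c_1)$ with $z = v - v_0$ is the desired $\mathbb{C}$-definable bijection onto realizations of $(T_{red})\times\mathcal C$.

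Finally I would verify that this bijection matches the product structure $q_{red}\otimes p_{\mathcal C}$: the constant $c_1$ is interdefinable over $(x,y)$ with $u$, hence over the whole tangent-bundle point $c_1\in\mathrm{dcl}$ of the point; one checks $c_1$ is independent from $(x,y,z)$ over $\mathbb{C}$ using that $(x,y,z)$ realizes a type analyzed in/orthogonal to the constants (by Theorem \ref{Theorem-normal bundle} the generic type of $(E)$, hence of $(T_{red})$, is orthogonal to the constants, so adding the free constant $c_1$ is automatically a free amalgamation). Stationarity of all types involved (we work over $\mathbb{C}$, algebraically closed) then upgrades the bijection on one realization to equality of the complete types: $q = q_{red}\otimes p_{\mathcal C}$. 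I expect the main obstacle to be the explicit computation showing the inhomogeneous equation $v' = (x+y)v + c_1 y x^3(x-1)$ has a rational solution $v_0$ over $\mathbb{C}(x,y)$ — this is what genuinely makes the "reduced" part split off cleanly rather than merely being a subquotient — and secondarily the bookkeeping to confirm the splitting is over $\mathbb{C}$ and not just over some extension.
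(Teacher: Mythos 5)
Your proof is correct and is essentially the paper's argument written in coordinates: your $c_1 = u/(x^3(x-1))$ and $z = v - v_0$ are exactly the coefficients $\lambda,\mu$ of the tangent vector in the frame $(v,w)$ with $w=\partial/\partial y$ that the paper uses, and the paper's computation $D^1(\omega_1)=0$, $D^1(\omega_2)=(x+y)\omega_2$ for the dual coframe is precisely your system for $(c_1,z)$. The one step you defer --- a rational solution of $h' = (x+y)h + y\,x^3(x-1)$ --- does exist and is simply $h = g(x,y) = xy + y^2/2$; no search is needed, since the vector field is always a flat section of its own linearization (that is $\mathcal L_v(v)=0$), which is exactly why the frame $(v,w)$ splits the tangent bundle.
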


\begin{proof}
We denote by $X_0$ the open subset of $\mathbb{A}^2$ on which the vector fields $v$ and $w$ are transversal (in particular, not zero). We get a trivialization of the tangent bundle of $X_0$ as: 
$$ \Phi: \begin{cases} X_0 \times \mathbb{A}^2 \rightarrow T_{X_0/\mathbb{C}} \\
(x,y, \lambda,\mu) \mapsto (x,y, \lambda v(x,y) + \mu w(x,y)) \end{cases}$$

We want to compute the derivation $D^1$ in the chart given by $\Phi$. Note that since the first projection is a morphism of $D$-varieties the derivation $D^1$ read in the chart given by $\Phi$ is determined by the values of $D^1(\lambda^\ast)$, $D^1(\mu^\ast)$ where $(\lambda^\ast,\mu^\ast)$ are the coordinate functions. We consider 
$$\omega_1 = dx/f(x) \text{ and } \omega_2 = dy - \frac{g(x,y)}{f(x)} dx = dy - g(x,y) \omega_1$$
where $f(x) = x^3(x - 1)$ and $g(x,y) = xy + y^2/2$. It is the dual basis of the basis given by the vector fields $v$ and $w$ so $\lambda^\ast = \omega_1 \circ \Phi$ and  $\mu^\ast = \omega_2 \circ \Phi$. An easy computation using Lemma \ref{presentation-tangentbundle} shows that: 

$$ D^1(\omega_1) = 0 \text { and } D^1(\omega_2) = \frac {\partial g} {\partial y}(x,y) \omega_2 = (x + y)\omega_2. $$

It follows that the derivation $D^1$ read in the chart $\Phi$ is given by the vector field:

$$v_1(x,y,\lambda, \mu) = v(x,y) + 0 \cdot \frac \partial {\partial \lambda} + (x + y)\mu \frac{ \partial} {\partial \mu} $$
which implies that $(X_0 \times \mathbb{A}^2  ,v_1)  \simeq (X_0 \times \mathbb{A}_1, w_1) \times (\mathbb{A}^1,0)$ where $w_1(x,y,\mu) = v(x,y) + (x + y)\mu \frac{ \partial} {\partial \mu}$ and therefore that $q$ is interdefinable with $q_{red} \otimes p_\mathcal{C}$.
\end{proof}

\begin{Claim}\label{counterexampleclaim2}
Let $(x,y)$ be a generic solution of $(E)$ then for all non zero  $k \in \mathbb{Z}$, all $c \in \mathbb{C}$ and all $h \in \mathbb{C}(x,y)$, 
$$ky \neq c + \mathrm{dlog}_\delta(h)$$
where $\delta$ is the unique derivation trivial on $\mathbb{C}$ such that $\delta(x) = f(x)$ and $\delta(y) = xy + y^2/2$.
\end{Claim}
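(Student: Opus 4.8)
I would argue by contradiction. Suppose $ky-c=\delta(h)/h$ for some $k\in\Z\setminus\{0\}$, $c\in\C$ and $h\in\C(x,y)^{\times}$. Since $(x,y)$ is a generic solution of $(E)$, $x$ and $y$ are algebraically independent over $\C$, so $\C(x,y)$ is the function field of the affine plane $\Aa^2$ equipped with the polynomial vector field $v=x^3(x-1)\partial_x+(xy+\tfrac{y^2}{2})\partial_y$ inducing $\delta$. The first step is to locate the divisor of $h$. For an irreducible curve $Z\subset\Aa^2$ with local equation $t$ at its generic point, $\delta$ preserves the discrete valuation ring $\mathcal O_{v_Z}$ (it is a polynomial derivation); writing $h=t^{m}w$ with $v_Z(w)=0$ one has $\delta(h)/h=m\,\delta(t)/t+\delta(w)/w$, where $v_Z(\delta(w)/w)\ge 0$, where $v_Z(\delta(t)/t)\ge 0$ if $Z$ is $v$-invariant, and where $v_Z(\delta(t)/t)=-1$ if $Z$ is not $v$-invariant (then $\delta(t)$ is a unit along $Z$). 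Hence $v_Z(\delta(h)/h)<0$ only if $Z$ is not $v$-invariant and $v_Z(h)\ne 0$. Since $ky-c$ is a polynomial, $v_Z(ky-c)\ge 0$ for every curve $Z\subset\Aa^2$, so $v_Z(h)=0$ for every non-invariant $Z$; therefore $\operatorname{div}_{\Aa^2}(h)$ is supported on the $v$-invariant curves, and consequently $h=\lambda\prod_j p_j^{\,n_j}$ with $\lambda\in\C^{\times}$, the $p_j$ irreducible defining polynomials of invariant curves, and $n_j\in\Z$.

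The second step is to determine the $v$-invariant curves of $v$ in $\Aa^2$. The projection $(x,y)\mapsto x$ is $v$-invariant and $v$ is at most quadratic in $y$ along its fibres, so $v$ defines a Riccati foliation over the $x$-line; its invariant curves are the invariant fibres — the zeroes $\{x=0\}$ and $\{x=1\}$ of $f$, which are exactly the poles of $1/f$ — together with the section $\{y=0\}$ and any algebraic sections of the Riccati equation $\tfrac{dy}{dx}=\tfrac{y}{x^2(x-1)}+\tfrac{y^2}{2x^3(x-1)}$. Under $y=1/z$ the latter correspond to algebraic solutions of the inhomogeneous linear equation $z'=-\tfrac{z}{x^2(x-1)}-\tfrac{1}{2x^3(x-1)}$, whose homogeneous solutions are proportional to $\exp\bigl(-\!\int \tfrac{dx}{x^2(x-1)}\bigr)$, a transcendental function; hence there is at most one further rational section, and one checks explicitly whether such a section $Z_0=V(p_0)$ occurs and, if so, records $p_0$ and its cofactor $\delta(p_0)/p_0$.

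With the complete list of invariant curves in hand, write $p_1=x$, $p_2=x-1$, $p_3=y$, whose cofactors $\delta(p_j)/p_j$ are $x^2(x-1)$, $x^3$ and $x+\tfrac{y}{2}$ respectively (plus whatever $p_0$ contributes). Then $\delta(h)/h=\sum_j n_j\,\delta(p_j)/p_j$ must equal $ky-c$ as a polynomial identity, and one compares coefficients: matching the coefficient of $y$ gives a relation among the $n_j$ attached to cofactors carrying a $y$-term ($\{y=0\}$ contributing $n_3/2$), and matching the pure powers of $x$ (degrees $3$, $2$, $1$) and the constant term gives further equations. If $\{x=0\},\{x=1\},\{y=0\}$ exhaust the invariant curves, the $x^2$-coefficient forces $n_1=0$, then the $x^3$-coefficient forces $n_2=0$, the $x$-coefficient forces $n_3=0$, while the $y$-coefficient demands $n_3=2k\ne 0$ — a contradiction (and the constant term gives $c=0$). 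The resulting linear system in the integer exponents must be shown to have no solution once $k\ne 0$; this is where any extra invariant curve and its cofactor must be fed in.

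\textbf{Main obstacle.} The delicate point is the second step: one must be certain that the classification of $v$-invariant curves is complete and must compute the cofactor of every invariant section that occurs, since a stray invariant curve whose cofactor has the shape $\tfrac{y}{2}+(\text{polynomial in }x)$ would change the linear system of the third step. Thus the argument rests on the Riccati structure over the $x$-line (finiteness of algebraic sections, traced back to the transcendence of the homogeneous solution) together with an explicit search for a rational section and an explicit computation of its cofactor; once these are pinned down, the coefficient comparison closes the proof.
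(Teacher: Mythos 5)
Your plan diverges completely from the paper's argument, which is purely local at the invariant line $y=0$: one writes $h=y^{n}u$ with $u$ a unit for the valuation $v_{y=0}$, applies $\mathrm{dlog}_\delta$, and pushes the resulting identity to the residue field $(\C(x),x^{3}(x-1)\tfrac{d}{dx})$ using the fact that the residue map is a morphism of differential rings; the case $n\neq 0$ is then excluded by the one-variable criterion of Corollary \ref{explicituniforminternality}, and the case $n=0$ by orthogonality of the base to the constants. No global classification of invariant curves is needed there. Your route is instead a global Darboux-type argument, and the step you yourself flag as the main obstacle --- completeness of the list of $v$-invariant curves --- is exactly where it fails: the list $\{x=0\},\{x=1\},\{y=0\}$ is \emph{not} exhaustive. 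Your Riccati equation does admit a rational section, namely $y=\frac{2x(x-1)}{2x^{2}-2x+1}$ (equivalently $z_{0}=\frac{2x^{2}-2x+1}{2x(x-1)}$ solves the inhomogeneous linear equation you wrote down), so the curve $p_{0}=(2x^{2}-2x+1)y-2x(x-1)=0$ is invariant, with cofactor $\delta(p_{0})/p_{0}=\tfrac{y}{2}+2x^{3}-x^{2}$.

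Once $p_{0}$ is added, your linear system in the exponents becomes solvable rather than inconsistent: since $\mathrm{dlog}_\delta(p_{0})-\mathrm{dlog}_\delta(x)-\mathrm{dlog}_\delta(x-1)=\tfrac{y}{2}+2x^{3}-x^{2}-(x^{3}-x^{2})-x^{3}=\tfrac{y}{2}$, the element $h=\bigl(p_{0}/(x(x-1))\bigr)^{2k}\in\C(x,y)$ satisfies $\mathrm{dlog}_\delta(h)=ky$. So your method, carried to completion, does not prove the Claim but produces an explicit counterexample to it (with $c=0$). Note that this $h$ is a unit for $v_{y=0}$ with residue $4^{k}\in\C^{\times}$, which is precisely the configuration ($n=0$, $c=0$, residue a nonzero constant) that the paper's own proof does not actually rule out: its displayed equation $\delta_{C}(\mathrm{res}(u))=-(c+nx)\,\mathrm{res}(u)$ is satisfied by any nonzero constant when $n=c=0$, and no contradiction with orthogonality arises. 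The upshot is that the gap in your plan is real, and filling it honestly shows that the statement as written appears to be false; both the Claim and the argument built on it need to be revisited.
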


\begin{proof}
We use the invariant line $y = 0$ and the valuation  $v$ attached to this invariant hypersurface. For the sake of a contradiction, assume that we can find a non zero  $k \in \mathbb{Z}$ , a non zero $h \in \mathbb{C}(x,y)$ and $c \in \mathbb{C}$ such that 
$$ (\ast): kyh = ch +\delta(h).$$

We write $n = v(h)$ and $h = y^n u$ for some unit $u \in \mathcal O_v^\ast$. The identity $(\ast)$ gives:
$$ ky^{n+1}u= c y^n u + y^n \delta(u) + ny^{n}x u,$$

and after dividing by $y^n$, we obtain:
$$ \delta(u) = -(c + nx)u + kyu.$$
and using that the residue map is a morphism of differential rings (Claim \ref{compatibilityvaluationderivaiton}):
$$ \delta_C(res(u)) = - (c + nx)res(u)$$
where $\delta_C = f(x) \frac d {dx}$ is the derivation induced by $D$ on the line $y = 0$. It follows that:
\begin{itemize}
\item either $n = 0$ and the equation $y' = -cy$ has a non-zero solution in $(\mathbb{C}(x),f(x) \frac d {dx})$. This is impossible since $(\mathbb{A}^1,f(x) \frac d {dx})$ is orthogonal to the constants and therefore to the generic type of  $y' = -cy$.

\item either $n \neq 0$ and then $nx =c + \mathrm{dlog}(res(u))$ which implies by Corollary \ref{explicituniforminternality} that $(E_{lin})$ is not uniformly relatively internal to the constants. \qedhere
\end{itemize}
\end{proof}

\begin{proof}[Proof of Proposition \ref{counterexample}]
By Claim \ref{counterexampleclaim1}, we have that $q = q_{red} \otimes p_\mathcal C$. It is therefore enough to prove that $(q_{red},\pi)$ is not uniformly relatively almost internal to the constants.

We now compute the image of $q_{red}$ under the change of variables $(x,y,z) \mapsto (x,y, z/y)$. If $(x,y,z) \models q_{red}$ then 
$$ \mathrm{dlog}(z/y) = \mathrm{dlog}(z) - \mathrm{dlog}(y) = x + y - x = y.$$

Therefore, $q_{red}$ is interdefinable with the generic type $ q_2$ of 
$$(T^2_{red}): \begin{cases}
x' = x^3(x-1) \\
y' = xy + \frac {y^2} 2 \\
w' = yw
\end{cases}$$

We now conclude that $(q_2, \pi)$ is not uniformly relatively almost internal to the constants: by Theorem \ref{explicituniforminternality}, otherwise there exist a non zero  $k \in \mathbb{Z}$ , a non zero $h \in \mathbb{C}(x,y)$ and $c \in \mathbb{C}$ such that 
$ ky = c +\mathrm{dlog}(h).$ This contradicts Claim \ref{counterexampleclaim2}. It follows that $(q_{red}, \pi)$ and $(q,\pi)$ are not uniformly relatively almost internal to the constants
\end{proof}

\bibliographystyle{plain}
\bibliography{bibliographie}

\end{document}